\newtheorem{theorem}{Theorem}[section]
\newtheorem{proposition}[theorem]{Proposition}
\newtheorem{lemma}[theorem]{Lemma}
\newtheorem{corollary}[theorem]{Corollary}
\newtheorem{remark}{Remark}[section]
\theoremstyle{definition}
\newtheorem{notation}[theorem]{Notation}
\newcommand{\R}{\mathbb{R}}
\newcommand{\C}{\mathbb{C}}
\newcommand{\N}{\mathbb{N}}
\newcommand{\E}{\mathbb E}
\renewcommand{\Re}{\mbox{Re}}
\newcommand{\re}{\textrm{Re}}
\newcommand{\an}[1]{\langle #1 \rangle}
\newcommand{\grad}{\bigtriangledown}
\newcommand{\be}{\begin{equation}}
\newcommand{\ee}{\end{equation}}
\newcommand{\bee}{\begin{eqnarray*}}
\newcommand{\eee}{\end{eqnarray*}}
\newcommand{\lab}{\label}
\newcommand{\fref}{\eqref}
\newcommand{\ba}{\begin{array}}
\newcommand{\ea}{\end{array}}
\newcommand{\pa}{\partial}
\newcommand{\bea}{\begin{eqnarray}}
\newcommand{\eea}{\end{eqnarray}}
\newcommand{\barR}{\overline{\mathbb R}}
\begin{document}

\title{Stability of steady states for Hartree and Schr\"odinger equations for infinitely many particles}


\author[C. Collot]{Charles Collot}
\address{CNRS, and Laboratoire Analyse G\'eometrie Mod\'elisation, CY Cergy Paris Universit\'e, 33 Boulevard du Port, 95000 Cergy, France}
\email{ccollot@cyu.fr}
\author[A.-S. de Suzzoni]{Anne-Sophie de Suzzoni}
\address{CMLS, \'Ecole polytechnique, CNRS, Universit\'e Paris-Saclay, 91128 Palaiseau Cedex, France.}
\email{anne-sophie.de-suzzoni@polytechnique.edu}


\begin{abstract} 

We prove a scattering result near certain steady states for a Hartree equation for a random field. This equation describes the evolution of a system of infinitely many particles. It is an analogous formulation of the usual Hartree equation for density matrices. We treat dimensions $2$ and $3$, extending our previous result \cite{CodS}. We reach a large class of interaction potentials, which includes the nonlinear Schr\"odinger equation. This result has an incidence in the density matrices framework. The proof relies on dispersive techniques used for the study of scattering for the nonlinear Schr\"odinger equation, and on the use of explicit low frequency cancellations as in Lewin and Sabin \cite{lewsab2}. To relate to density matrices, we use Strichartz estimates for orthonormal systems from Frank and Sabin \cite{FS}, and Leibniz rules for integral operators.

\end{abstract}

\keywords{Hartree equation, nonlinear Schr\"odinger equation, density matrices, random fields, stability, scattering}
\subjclass[2010]{primary, 35Q40, 35B35, secondary, 35B40} 

\maketitle

\tableofcontents

\section{Introduction}

\noindent We consider the Cauchy problem:
\be \label{hartree}
\left\lbrace{\begin{array}{l l}
i\partial_t X  = -\Delta X + (w*\E(|X|^2)) X, \\
X(t=0) = X_0, \end{array}} \right. \quad x\in \mathbb R^3,
\ee
where $X : \Omega \times \R_t \times \R^3_x\rightarrow \C$ is a random field defined over a probability space $( \Omega,\mathcal A, \mathbb P)$ with expectation $\mathbb E$, $*$ is the convolution product on $\R^3$, and $w$ is an even pair interaction potential. We are able here to cover the case of the mild assumption for $w$ that:
\be \label{id:hpw}
w \mbox{ is a finite Borel measure on } \mathbb R^3.
\ee
This is the case if, for example, $w\in L^1(\mathbb R^3)$, or, if $w=\pm \delta$ is the Dirac mass, in which case \fref{hartree} is a variant of the nonlinear Schr\"odinger equation
$$
i\partial_t X  = -\Delta X  \pm \E(|X|^2) X.
$$
Equation \fref{hartree} admits the following phase invariance: if $X(\omega ,t,x)$ is a solution to \fref{hartree} then so is
\be \label{id:phase}
e^{ia(\omega)}X(t,x,\omega) \quad \mbox{for all measurable } a:\Omega \rightarrow \mathbb R.
\ee
Equation \fref{hartree} arises in the study of large fermionic systems (see below), and may be understood as a system of coupled Hartree equations, indexed by $\omega \in \Omega$, writing it as $i\pa_t X(\omega)=-\Delta X(\omega)+(w*\int_{\Omega}|X(\omega')|^2d\mathbb P (\omega'))X(\omega)$. For the problem at hand, we choose to keep the random field point of view as a convenient framework.

\subsection{Steady states}
Equation \fref{hartree} admits particular steady states. We assume that on $\Omega$ is defined a Wiener process $W$ of dimension $3$ (a white noise on $\mathbb R^3$), namely $(dW(\xi))_{\xi \in \R^3}$ is a family of infinitesimal independent Gaussian fields with values in $\C$, such that for all $\xi,\eta \in \R^3$
$$
\E(dW(\xi) \overline{dW(\eta)}) = \delta (\xi - \eta) d\xi d\eta.
$$
We refer to the appendix \ref{sec:tech} for some basic results and references on Wiener integration used in this article. Consider for $f\in L^2(\mathbb R^3, \mathbb C)$ and $m = \int_{\R^3} w \cdot \int_{\R^3}|f|^2\in \R$ (where $\int_{\R^3} w$ denotes the total mass of $w$) the random field:
\begin{equation}\label{id:Y}
 \begin{array}{  l l l l}
Y_f:&\Omega \times \R \times \R^3 &\rightarrow &\C \\
&(\omega, t, x)& \mapsto & \int_{\mathbb R^3} f(\xi) e^{-it(m+\xi^2)+i\xi \cdot x} dW(\xi)(\omega)\end{array}.
\end{equation}
For every $(t,x)$, $Y_f(t,x)$ is a centred Gaussian variable with constant variance $\mathbb E(|Y_f|^2(t,x))=\int_{\mathbb R^3} |f|^2$. If for $k\in \{0,1,...\}$ and $s>k$, $\int_{\mathbb R^3} |f(\xi)|^2\langle \xi\rangle^{2s}d\xi<\infty$, then for almost every $\omega$, $Y_f(\omega)$ is a continuous function with subpolynomial growth at infinity on $\mathbb R^{1+3}$, and with continuous $\pa_t^{\alpha}\nabla_x^\beta Y_f$ derivatives for $2\alpha+\beta\leq k$. For $s>2$ in particular, almost surely, the identity
$$
i\partial_t Y_f = \int f(\xi) (m+ \xi^2) e^{-it(m+\xi^2) + ix\cdot \xi}dW(\xi) = (m-\Delta) Y_f
$$
holds in a classical sense everywhere on $\mathbb R\times \mathbb R^3$, showing that $Y_f$ is a solution to \fref{hartree}. Assuming solely $s>0$, we still have that $Y_f$ is a weak solution to \fref{hartree} almost surely.

The field $Y_f$ is a Gaussian field whose law is invariant under space translations, which makes it non-localised, and time translations, which suggests the denomination "equilibrium" even though $Y_f$ is not a invariant state. In the sequel we omit the dependance in $f$ in the notation and write $Y$ for $Y_f$, and $Y_0$ for $Y_f(t=0)$. Note also that one can assume $f\geq 0$ without loss of generality.

\subsection{An effective equation for Fermions} The equation \fref{hartree} at stake here is closely related to the commonly used Hartree equation for density matrices. To study systems of infinitely many Fermions, it is customary to study the equation:
\be \lab{eq:hartreeoperator}
i\pa_t \gamma = [-\Delta +w*\rho_{\gamma},\gamma].
\ee
Above, $\gamma$ is a time dependent bounded operator on $L^2(\mathbb R^3)$ with integral kernel $\tilde \gamma (x,y) $, $[\cdot,\cdot]$ denotes the commutator, and $\rho_{\gamma}(x)=\tilde \gamma (x,x)$ is the density of particles, that is the diagonal of $\tilde \gamma$. An infinite number of particles can then be modelled by a solution of \fref{eq:hartreeoperator} which is not of finite trace (the trace of the operator being, by the derivation of the model, the number of particles). In \cite{dS}, the second author proposed the equation \ref{hartree} as an alternative equation to \fref{eq:hartreeoperator}. For a detailed paralell between \eqref{hartree} and \eqref{eq:hartreeoperator}, we refer to \cite{CodS,dS,CdS2}.

Solutions of \fref{eq:hartreeoperator} with an infinite number of particles were studied previously in \cite{BPF,BPF2,C,Z} for exemple, and more recently in \cite{CHP,CHP2,lewsab2,lewsabI}. The derivation of equation \eqref{eq:hartreeoperator} from many body quantum mechanics has been treated in \cite{BEGMY,BGGM,BJPSS,derfer3,EESY,derfer5}.

In the seminal work \cite{lewsab2}, the authors show the stability of the above equilibria for the Equation \fref{eq:hartreeoperator} for density matrices in dimension 2. Important tools are dispersive estimates for orthonormal systems \cite{FLLS,FS}. This work has been extended to higher dimension in \cite{CHP2}. Note that in higher dimension, some structural hypothesis is made on the interaction potential $w$, in particular, in dimension $3$, it imposes $\hat w(0) = 0$, to solve some technical difficulties about a singularity in low frequencies of the equation that we will identify precisely in the sequel. The stability result corresponds to a scattering property in the vicinity of these equilibria: any small and localized perturbation evolves asymptotically into a linear wave which disperses. We mention equally \cite{CHP,lewsabI} about problems of global well-posedness for the equation on density matrices.\\

\noindent A relevant recent result about equation \eqref{eq:hartreeoperator} is by Lewin and Sabin, \cite{LS3} in which the authors prove that the semi-classical limit of the Hartree equation : 
\begin{equation}\label{hartreesemicl}
ih \partial_t \gamma = [-h^2 \Delta + h^d w* \rho_\gamma , \gamma]
\end{equation}
where $\gamma$ is a positive integral operator and $\rho_\gamma$ is the diagonal of its integral kernel is the Vlasov equation
\begin{equation}\label{Vlasov}
\partial_t W + 2 v\cdot \nabla_x W - \nabla_x (w*\rho_W) \cdot \nabla_v W = 0
\end{equation}
where $\rho_W = (2\pi)^{-d} \int W(v,x) dv$. In the course of the proof, they prove functionnal inequalities such as Lieb-Thirring inequalities. The data are taken in entropy classes relative to a referential state corresponding to our equilibria.

Recentely again, Pusateri and Sigal, \cite{PuSi19} proved scattering near the $0$ solution for the equation 
$$
i\partial_t \gamma = [-\Delta + f(\rho_\gamma),\gamma]
$$
for a large class of nonlinearities $f$ that include $w*\rho_\gamma$ where $w$ belongs to a weak $L^r$, for $r\in (1,d)$. Their result is quite general as it narrowly misses the Coulomb potential. They give a conjecture of modified scattering for the Coulomb potential.

In \cite{CodS}, we proved the asymptotic stability of equilibria for \eqref{hartree} in dimension higher than $4$ without the structural hypothesis on the potential of interaction at low frequencies of \cite{CHP2}. The problem of the stability of the equilibria \fref{id:Y} for Equation \fref{hartree} shares similarities with the mechanism of scattering for the Gross-Pitaevskii equation $i\pa_t \psi=-\Delta \psi+(|\psi|^2-1)\psi$. In both problems the linearized dynamics have distinct dispersive properties at low and high frequencies, making the nonlinear stability problem harder, especially in low dimensions, where dispersion is weaker. The proof of scattering for small data for the Gross-Pitaevskii equation was done in \cite{GHN,GNT3,GNT2,GNT}. The solution in dimension higher than $4$ was to use spaces with different regularities at low and high frequencies, inspired by \cite{GNT}. Indeed, this was sufficient because it gave rise to a quadratic Schr\"odinger-type equation, where dispersive techniques are sufficient to prove scattering. In dimension $3$, this strategy is not sufficient. Therefore, we adopt a strategy similar to \cite{GHN,GNT3,GNT2,lewsab2}, which is to treat differently the first Picard interaction of the solution than the rest of the solution. In \cite{GHN,GNT3,GNT2}, this was done through a normal form to remove the difficult quadratic part of the equation, and in \cite{lewsab2}, this was done through a complete expansion of the solution into Picard interactions. Random cancellations and homogeneous Strichartz estimates allow us to close the argument. 

We give a fairly detailed strategy of the proof, and then compare our techniques more extensively with the existing literature at the end of this introductory section.

\subsection{Main result}

\noindent We state here our result in dimension $3$. An analogous result in dimension $2$ is given in Appendix \ref{sec:dim2}. In what follows, we write $\langle \xi \rangle =(1+|\xi|^2)^{1/2}$ and, given $a\in \mathbb R$, $(a)_+=\max (a,0)$ and $(a)_-=\max (-a,0)$ the nonnegative and nonpositive parts of $s$. We write with an abuse of notation $f(\xi)=f(r)$ with $r=|\xi|$, if $f$ has spherical symmetry. The space $L^2_\omega H^{s}_x$ is the set of measurable functions $Z: \Omega \times \mathbb R^d \rightarrow \mathbb C$ such that $Z(\omega , \cdot)\in H^s(\mathbb R^3)$ almost surely and 
$$
\int_{\mathbb R^d\times \Omega} \langle \xi \rangle^{2s} |\hat Z(\omega , \xi)|^2d\xi d \omega<+\infty.
$$
We introduce the notation for the solutions to $iu_t=(-\Delta +m)u+V Y$ for $V \in L^1_{\textrm loc}(\R, L^2(\R^3)) $: 
\be \label{def:WV}
S(t) = e^{-it(m-\Delta)} \quad, \quad W_V(t) = -i\int_0^{t} S(t-\tau) (V(\tau) Y(\tau) )d\tau.
\ee

\begin{theorem} \label{th:main} We denote by $h$ the Fourier transform of $|f|^2$ on $\mathbb R^3$.
Assume the momentum distribution function $f$ satisfies : \begin{itemize}
\item $f>0$ is a bounded $C^1$ radial function on $\mathbb R^3$, with $\partial_r f<0$,
\item $\int_{\mathbb R^3} \langle \xi \rangle f^2(\xi)d\xi<\infty$ and $\int_{\mathbb R^3}|\xi|^{-1} |f(\xi)\nabla  f(\xi)|d\xi < \infty$,
\item $\int_0^\infty (1+r)|h|(r)dr<\infty$ and $\int_0^\infty \left(\frac{|h'|(r)}{r}+|h''|(r)\right)dr<\infty$ where the derivatives $h'$ and $h''$ are defined in the sense of distributions,
\end{itemize}
and that $w$ satisfies \fref{id:hpw} and\footnote{Note that since $w$ is an even finite Borel measure, its Fourier transform is continuous and real, so that $w_+$ and $w_-$ are well-defined.} (where below $\epsilon_h$ is a constant depending on $h$ defined by \fref{def:eph})
$$
\| (\hat w)_-\|_{L^{\infty}}\left(\int_{0}^{\infty} r|h(r)|dr\right)<2 \quad \mbox{and} \quad \hat w (0)_+ \epsilon_h <1.
$$
Then there exists $\delta>0$ such that for all $Z_0\in L^2_\omega H^{1/2}_x \cap L^{3/2}_{x}L^2_\omega$ with $\| Z_0\|_{L^2_\omega H^{1/2}_x \cap L^{3/2}_{\omega}L^2_x}\leq \delta$ the following holds true. The Cauchy problem \fref{hartree} with initial datum $Y_0+Z_0$ is globally well-posed in $Y  + \mathcal{C} (\R, L^2_\omega,H^{1/2})$, and what is more, there exist $Z_\pm \in L^2_\omega H^{1/2}_x$ and $V\in L^{2}_{t}H^{1/2}_x\cap L^{5/2}_{t,x}$ such that
\be \label{id:mainresult}
X(t) = Y(t) + W_V(Y)(t) +S(t) Z_\pm +o_{L^2_\omega H^{1/2}_x}(1) \quad \quad \mbox{as } t\rightarrow \pm \infty.
\ee
For the third term above, there exists $\tilde Z_\pm \in L^3_x L^2_\omega$ with $S(t) \tilde Z_\pm \in C(\mathbb R,L^3_x L^2_\omega)$ such that 
\be \label{id:mainresult2}
W_V(Y) =  S(t) \left(\tilde Z_\pm + o_{L^3_x L^2_\omega}(1) \right)= S(t) \tilde Z_\pm + o_{L^3_xL^2_\omega} (1)  \quad \quad \mbox{as } t\rightarrow \pm \infty.
\ee
\end{theorem}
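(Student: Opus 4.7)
The plan is to write $X = Y + Z$ with initial perturbation $Z_0$ and to use that $\E(|Y|^2)=\int|f|^2$ is constant in $(t,x)$, so $w*\E(|Y|^2)=m$. The equation for $Z$ then reads
\[
i\pa_t Z + (\Delta - m) Z = V(Y+Z), \qquad V := w*\E\bigl(2\Re(Y\bar Z)+|Z|^2\bigr).
\]
Following the approach of Lewin and Sabin \cite{lewsab2}, I would isolate the first Picard iteration by decomposing further $Z = W_V(Y) + R$, with $W_V$ given by \eqref{def:WV}. The nonlinear remainder $R$ then solves
\[
i\pa_t R + (\Delta - m) R = V\,R + V\,W_V(Y), \qquad R(0)=Z_0,
\]
an equation whose right-hand side is bilinear in the perturbative quantities and thus behaves like a cubic NLS nonlinearity, amenable to dispersive techniques once the function spaces are properly chosen.

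\textbf{Self-consistent equation for $V$.} Substituting $Z=W_V(Y)+R$ into the definition of $V$, the contribution $2w*\Re\E(Y\overline{W_V(Y)})$ is linear in $V$ and, by the Gaussian structure of $Y$, rewrites as a space-time Fourier multiplier $\mathcal L[V]$ whose symbol is computable explicitly in terms of $\hat w$ and $h=\widehat{|f|^2}$. This yields a fixed-point identity of the form
\[
(I - \mathcal L)\,V = 2 w*\Re\E(Y\bar R) + w*\E|Z|^2.
\]
The core analytic task is to invert $I-\mathcal L$ on a suitable space, which the proof will take to be $L^2_t H^{1/2}_x \cap L^{5/2}_{t,x}$. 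Here the two hypotheses on $w$ become indispensable: $\|(\hat w)_-\|_\infty \int_0^\infty r|h|(r)\,dr<2$ controls the generic-frequency behaviour, while $\hat w(0)_+\,\epsilon_h<1$ handles the low-frequency singularity specific to dimension $3$, a difficulty absent from the higher-dimensional analysis of \cite{CodS}. The monotonicity $\pa_r f<0$ together with the explicit low-frequency cancellations of \cite{lewsab2} should then produce $(I-\mathcal L)^{-1}$ bounded on the chosen space.

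\textbf{Bootstrap and scattering.} With $(I-\mathcal L)^{-1}$ in hand, I would close the nonlinear estimates by a bootstrap on a combined norm comprising $\|R\|_{L^\infty_t L^2_\omega H^{1/2}_x}$ and its Strichartz partners, $\|V\|_{L^2_t H^{1/2}_x \cap L^{5/2}_{t,x}}$, and $L^3_x L^2_\omega$-type Strichartz norms of $W_V(Y)$. The source terms $\E(Y\bar R)$ and $\E|Z|^2$ are controlled using the Strichartz estimates for orthonormal systems of Frank and Sabin \cite{FS}, reformulated in the random-field language (replacing the sum over an orthonormal basis by integration in $\omega$) and combined with Leibniz rules for integral operators to recover the $H^{1/2}$ regularity. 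Smallness of $\delta$ closes the contraction globally in time. The two scattering assertions \eqref{id:mainresult} and \eqref{id:mainresult2} then follow from standard Cauchy-in-time arguments on the respective Duhamel representations of $R$ and $W_V(Y)$, the weaker $L^3_x L^2_\omega$ convergence in \eqref{id:mainresult2} reflecting the lack of $\omega$-localisation inherent to the first Picard iteration.

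\textbf{Main obstacle.} The delicate step, I expect, is the inversion of $I-\mathcal L$ at low frequencies: in dimension $3$ its symbol exhibits a genuine singularity on the paraboloid $\tau=|k|^2$ near $k=0$, and extracting from the two structural inequalities on $\hat w$ a uniform bound on $(I-\mathcal L)^{-1}$ over $L^2_t H^{1/2}_x \cap L^{5/2}_{t,x}$ requires a careful analysis of the resonant behaviour of $\mathcal L$, exploiting crucially the monotonicity of $f$ and the integrability assumptions imposed on $h$, $h'$ and $h''$. Reconciling the $L^2_\omega H^{1/2}_x$ scattering of $R$ with the weaker $L^3_x L^2_\omega$ scattering of $W_V(Y)$ is the reason one cannot expect $Z$ itself to scatter in $L^2_\omega H^{1/2}_x$, and dictates the form of the statement.
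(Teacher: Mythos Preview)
Your decomposition $Z=W_V(Y)+R$ and the self-consistent equation $(I-\mathcal L)V=\text{sources}$ match the paper exactly (your $R$ is the paper's $Z$, your $\mathcal L$ is the paper's $L_2$). However, you misidentify the main obstacle. The invertibility of $I-\mathcal L$ on $L^2_{t,x}$ is essentially Proposition~1 and Corollary~1 of Lewin--Sabin \cite{lewsab2}; the paper cites this directly (Proposition~\ref{pr:bdL2}) and only adds the easy extension to $L^{5/2}_{t,x}$. The genuine difficulty in dimension~3 lies in the \emph{quadratic} source terms, and here your proposal has a gap.

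When you expand your sources $2w*\Re\E(Y\bar R)+w*\E|Z|^2$, two problematic pieces appear: $\E|W_V(Y)|^2$ (from $|Z|^2$) and $2\Re\E(\bar Y W_V^2(Y))$ (from $\E(Y\bar R)$, since $R$ contains $W_V^2(Y)$). Neither of these is individually in $L^2_{t,x}$: at low spatial frequencies $W_V(Y)\approx -iY\int_0^t V$, which is merely $L^\infty_t$ and makes each term grow. The paper's key contribution (Section~\ref{sec:spefquad}) is to group these two pieces as $Q_2(V)$ and show, via an explicit Fourier-side computation (Lemmas~\ref{lem:exprQ21}--\ref{lem:exprQ22} leading to \eqref{id:expressionQ2} and the kernel bound of Lemma~\ref{lem-normK}), that their sum exhibits a cancellation placing $Q_2(V)$ in $L^2_{t,x}\cap L^\infty_t L^2_x$. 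This quadratic cancellation is the analogue, one order higher, of the linear cancellation you correctly invoke for $\mathcal L$, and it cannot be recovered from Strichartz estimates alone. Relatedly, the Frank--Sabin estimates for orthonormal systems are \emph{not} used for Theorem~\ref{th:main}; they enter only in the proof of Corollary~\ref{corollaire}. For the theorem, the paper instead exploits a half-derivative smoothing gain specific to interactions with $Y$ (Proposition~\ref{pr:freeandpotential}, Remark~\ref{re:spreading}), which is what makes the $L^{3/2}_x L^2_\omega$ assumption on $Z_0$ the right localisation hypothesis.
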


Relating the framework of random fields to that of density matrices, from the above Theorem \ref{th:main} one obtains a scattering result for the operator:
\be \label{def:gamma}
\gamma =\mathbb E (|X\rangle  \langle X|):= u \mapsto \Big( x\mapsto  \mathbb E (X(x) \langle X,u\rangle_{L^2(\mathbb R^d)})\Big),
\ee
with respect to the one associated to the equilibrium $Y$:
\be \label{def:gammaf}
\gamma_f= \mathbb E (|Y\rangle \langle Y |),
\ee
which is the Fourier multiplier by $|f|^2(\xi)$. This convergence holds in Schatten-Sobolev spaces (where below $\mathfrak S^p$ is the standard Schatten space $p$-norm for operators on $L^2(\mathbb R^3)$ and $\alpha\in\mathbb R$):
$$
\| \gamma \|_{\mathfrak S^{\alpha,p}}=\| \langle \nabla \rangle^{\alpha} \gamma \langle \nabla \rangle^{\alpha} \|_{\mathfrak S^p}.
$$

\begin{corollary} \label{corollaire}

Under the hypotheses of Theorem \ref{hartree}, defining the operators on $L^{2}(\mathbb R^3)$:
\be \lab{def:WVpm}
W_{V,\pm}:u\mapsto-i\int_0^{\pm \infty} S(-\tau) (V(\tau) S(\tau)u)d\tau,
\ee
and
\be \lab{def:gammapm}
\gamma_\pm= \mathbb E\left(|W_{V,\pm}Y_0+Z_{\pm}\rangle \langle Y_0|+|Y_0 \rangle \langle W_{V,\pm}Y_0+Z_\pm|+|W_{V,\pm}Y_0+Z_{\pm}\rangle \langle W_{V,\pm}Y_0+Z_\pm |\right)
\ee
there holds for any $\epsilon>0$ that $\gamma_\pm \in \mathfrak S^{1/2,4+\epsilon}$ and the convergence:
$$
\gamma=\gamma_f+e^{i\Delta t}\gamma_\pm e^{-i \Delta t}+o_{\mathfrak  S^{\frac 12,4+\epsilon}}(1) \ \mbox{ as } t\rightarrow \pm \infty.
$$

\end{corollary}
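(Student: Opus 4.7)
The plan is to translate the random-field scattering of Theorem~\ref{th:main} into an asymptotic statement about the correlation operator $\gamma = \mathbb E|X\rangle\langle X|$. I will first observe that the Duhamel identity $S(-t)W_V(Y)(t) = -i\int_0^t S(-\tau)(V(\tau) S(\tau) Y_0)\,d\tau$, combined with \eqref{id:mainresult2}, identifies $\tilde Z_\pm$ with $W_{V,\pm} Y_0$; inserting this into \eqref{id:mainresult} gives
\[
X(t) - Y(t) = S(t)\bigl(W_{V,\pm} Y_0 + Z_\pm\bigr) + \eta_\pm(t),
\]
where $\|\eta_\pm(t)\|_{L^2_\omega H^{1/2}_x + L^3_x L^2_\omega} \to 0$ as $t \to \pm\infty$. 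Using $Y(t) = S(t) Y_0$ and the cancellation of the scalar phase $e^{-itm}$ in $S(t) = e^{-itm}e^{it\Delta}$ across each ket-bra pair, the trivial expansion of $\gamma - \gamma_f$ into the three bilinear terms $\mathbb E|X-Y\rangle\langle Y|$, $\mathbb E|Y\rangle\langle X-Y|$, $\mathbb E|X-Y\rangle\langle X-Y|$ then rearranges as $\gamma - \gamma_f = e^{i\Delta t} \gamma_\pm e^{-i\Delta t} + \mathrm{Err}(t)$, where $\mathrm{Err}(t)$ collects every term containing at least one factor of $\eta_\pm$.

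\medskip
\noindent The a priori bound $\gamma_\pm \in \mathfrak S^{1/2,4+\epsilon}$ rests on the identity $\mathbb E(|AY_0\rangle\langle BY_0|) = A\gamma_f B^*$, valid for any deterministic operators $A,B$ and immediate from $|AY_0\rangle\langle BY_0| = A|Y_0\rangle\langle Y_0|B^*$. This reduces the $Y_0$-only part of $\gamma_\pm$ to $W_{V,\pm}\gamma_f + \gamma_f W_{V,\pm}^* + W_{V,\pm}\gamma_f W_{V,\pm}^*$. To control the conjugated Schatten norm $\|\langle\nabla\rangle^{1/2}(\cdot)\langle\nabla\rangle^{1/2}\|_{\mathfrak S^{4+\epsilon}}$, I will apply the Frank--Sabin Strichartz estimates for orthonormal systems to the Duhamel operator $W_{V,\pm}$---whose potential $V \in L^2_t H^{1/2}_x \cap L^{5/2}_{t,x}$ lies in the admissible range---and employ the Leibniz rules for integral operators to commute $\langle\nabla\rangle^{1/2}$ through the Duhamel integrals. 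The contributions of $\gamma_\pm$ involving $Z_\pm$ are controlled either by the rank-one bound $\|\mathbb E|u\rangle\langle v|\|_{\mathfrak S^p} \leq \mathbb E(\|u\|_{L^2_x}\|v\|_{L^2_x})$, applied to the $Z_\pm$--$Z_\pm$ piece, or, for the $Z_\pm$--$Y_0$ cross piece, by repeating the algebraic identity after extracting the first-chaos component of $Z_\pm$ with respect to $dW$.

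\medskip
\noindent For $\mathrm{Err}(t) \to 0$ in $\mathfrak S^{1/2,4+\epsilon}$, the quadratic term $\mathbb E|\eta_\pm\rangle\langle\eta_\pm|$ is handled by the Hilbert--Schmidt bound $\|\mathbb E|u\rangle\langle u|\|_{\mathfrak S^2} \leq \|u\|_{L^2_\omega L^2_x}^2$ and the embedding $\mathfrak S^2 \hookrightarrow \mathfrak S^{4+\epsilon}$, after redistributing the $\langle\nabla\rangle^{1/2}$ weights via the Leibniz rule. The mixed cross $\mathbb E|\eta_\pm\rangle\langle S(t)(W_{V,\pm}Y_0+Z_\pm)|$ reduces to the cross against $Y$ itself by factoring out the already-controlled operators $W_{V,\pm}$ and $\gamma_f^{1/2}$. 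The main obstacle is therefore the remaining cross $\mathbb E|\eta_\pm\rangle\langle Y|$: because $Y$ is spatially stationary one has $\|Y\|_{L^2_\omega L^2_x} = \infty$, so a naive Cauchy--Schwarz diverges. I will circumvent this using the Gaussian structure once more---by orthogonality of Wiener chaoses, only the first-chaos component of $\eta_\pm$ in $dW$ contributes to the cross---so that the cross operator reads as a deterministic linear-response operator composed with $\gamma_f$, and its $\mathfrak S^{1/2,4+\epsilon}$-norm can then be made arbitrarily small by the same Frank--Sabin / Leibniz-rule package, the quantitative decay of $\eta_\pm$ from Theorem~\ref{th:main} providing the vanishing in the limit.
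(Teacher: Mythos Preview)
Your overall architecture---expand $\gamma-\gamma_f$ bilinearly, reduce the $Y_0$-only pieces to $W_{V,\pm}\gamma_f W_{V,\pm}^*$ etc., and invoke Frank--Sabin plus a Leibniz rule---matches the paper's, but two load-bearing steps are missing.

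\textbf{The sharp Schatten exponent requires an extra a priori bound on $V$.} You assert that $V\in\Theta_V=L^2_tH^{1/2}_x\cap L^{5/2}_{t,x}$ ``lies in the admissible range'' for the Frank--Sabin estimate \eqref{bd:strichartzpotential}. That estimate reads $\|W_{V,\pm}\|_{\mathfrak S^{2q'}}\lesssim\|V\|_{L^{p'}_tL^{q'}_x}$ with $\tfrac{2}{p'}+\tfrac{3}{q'}=2$ and $q'>2$; reaching $\mathfrak S^{4+\epsilon}$ means $q'\to 2^+$, hence $p'\to\infty$. The space $\Theta_V$ only provides time-integrability exponents $\le 5/2$, so at best you would get $\mathfrak S^{5}$, not $\mathfrak S^{4+\epsilon}$. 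The paper's Step~1 is therefore not cosmetic: it goes back to the fixed-point equation $V=(\mathrm{Id}-L_2)^{-1}\tilde V$, shows each piece of $\tilde V$ lies in $L^\infty_tL^2_x$ or $L^{p'}_tL^{q'}_x$ (using in particular the $L^\infty_tL^2_x$ bound on $Q_2$ from Proposition~\ref{prop:QuadV2.2} and the continuity of $(\mathrm{Id}-L_2)^{-1}-\mathrm{Id}:L^2_{t,x}\to L^\infty_tL^2_x$), and packages the result as $V\in E$ in \eqref{id:defE}. Without this, the exponent in the Corollary is not reached.

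\textbf{The Leibniz argument only closes after a frequency splitting.} Commuting $\langle\nabla\rangle^{1/2}$ through the Duhamel operator $W_{V,+}$ is not a soft step: the paper performs it via the explicit power-series expansion of $|\xi|^{1/2}=|\eta|^{1/2}(1+\cdots)^{1/4}$, which converges only when the frequency of $V$ is small compared to that of the input. This forces the decomposition $V=V_{\le 1}+V_{\ge 1}$ in Step~2. For $V_{\le 1}$ one gets the Frank--Sabin/Leibniz bound \eqref{bd:WVlocalised} (first inequality). For $V_{\ge 1}$ an entirely different mechanism is used: the linear smoothing estimate \eqref{bd:potentialtorandomrefined}, i.e.\ $\|W_{V_{\ge 1},+}Y_0\|_{L^2_\omega H^1_x}\lesssim\|V\|_{L^2_tH^{1/2}_x}$, which yields the Hilbert--Schmidt bound in \eqref{bd:WVlocalised} (second inequality). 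Your proposal names the Leibniz rule but omits this dichotomy; without the high-frequency half, the argument does not close.

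A smaller point: your treatment of the error $\eta_\pm$ via ``first-chaos extraction'' is workable in spirit but unnecessarily indirect. The paper instead writes the residual from $W_V(Y)$ as $\tilde W_{V,+}Y_0$ with potential $\tilde V=V\mathbf 1_{\tau\ge t}$, so that $\|\tilde V\|_E\to 0$ and the \emph{same} bounds \eqref{bd:WVlocalised}--\eqref{bd:WVY0Zoperator} from Steps~2--3 apply verbatim to the error. This is cleaner than isolating chaos components after the fact.
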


\begin{remark} The conditions on $f$ are satisfied by thermodynamical equilibria for bosonic or fermionic gases at a positive temperature $T$, and the Bessel potential distribution:
$$
|f(\xi)|^2=\frac{1}{e^{\frac{|\xi|^2-\mu}{T}}-1}, \ \ \mu <0, \ \ \text{ or } \ \ |f(\xi)|^2=\frac{1}{e^{\frac{|\xi|^2-\mu}{T}}+1}, \ \ \mu\in \mathbb R, \ \ \mbox{ or } \ \ |f(\xi)|^2=\langle \xi \rangle^{-\alpha}, \ \ \alpha >4,
$$
respectively, but it is not the case of the fermionic gases at zero temperature $|f(\xi)|^2={\bf 1}_{|\xi|^2\leq \mu}$ for $\mu>0$. Given an $f$ satisfying the hypotheses, interaction potentials satisfying the requirements are for example any Borel measure with total mass $c$ (for example  $w=\pm c\delta$) or $w\in L^1$ with $\| w\|_{L^1}=c$ for $c\leq 2(\int_0^\infty r|h|)^{-1}$.

\end{remark}

\begin{remark}

The sobolev regularity $s=\frac12$ for the initial perturbation appears optimal since $H^{1/2}_x$ is the critical space for local well-posedness in dimension $3$ for the usual NLS. The space $L^{3/2}_xL^2_\omega$ asks for an additional spatial localisation of $Z_0$, and ensures the potential generated by the interaction of the free evolution with the equilibrium $2\re \E( \bar Y S(t)Z_0)$ is not singular at low frequencies (it is related to taking the initial datum in \cite{CHP2,lewsab2} in low Schatten spaces).

The space $\mathfrak S^{1/2,4+\epsilon}$ appears to be optimal in view of the regularity of the perturbation $Z_0$ (for $1/2$), and of the Strichartz estimate for orthonormal systems \fref{bd:strichartzpotential} from \cite{FLLS,FS} (for $4$). It sharpens the result of \cite{CHP2} where scattering was proved to hold in $\mathfrak S^{0,6}$.

\end{remark}

\begin{remark}

Theorem \ref{th:main} has a direct consequence for Equation \fref{eq:hartreeoperator} on density matrices. It implies scattering for \fref{eq:hartreeoperator} near $\gamma_f$ for all perturbations in $\mathfrak S^{1,1/2}$ (with a finite number of particles). Indeed, the density matrix associated to $Y_0+Z_0$ is $\gamma_f+\gamma'$, where $\gamma'=\gamma'_1+\gamma_2'$ with $\gamma_1'=\mathbb E |Z_0\rangle \langle Z_0|$ and $\gamma_2'=\mathbb E (|Z_0\rangle \langle Y_0|+|Y_0\rangle \langle Z_0|)$. By taking $Z_0\in L^2_\omega H^{1/2}$ independent in probability of $Y$ we have $\gamma'_2=0$, and that the operator $\gamma'_1$ can be any non-negative operator in $\mathfrak S^{1,1/2}$. Theorem \ref{th:main} implies also scattering for \fref{eq:hartreeoperator} near $\gamma_f$ for perturbations in a subset of $\mathfrak S^{2,1/2}$ (infinite number of particles). This is obtained by taking $Z_0$ not independent of $Y_0$, so that $\gamma'_2\in \mathfrak S^{2,1/2}$. The appearance of $Y$ in $\gamma'_2$ has a regularising effect. Hence, the operators that can be written as $\gamma'_1+\gamma_2'$ for $Z_0\in L^2_\omega H^{1/2}$ are a subset of $\mathfrak S^{2,1/2}$ with higher regularity that we did not try to characterize.

\end{remark}

\subsection{Set-up, strategy of the proof, comparison with literature} \label{setup}

We describe here the change of unknown that transforms the stability problem at hand into an amenable fixed point problem. We then explain formally the bounds obtained for the linear and multilinear terms.

\subsubsection{Set-up} From standard arguments, Equation \eqref{eqZtilde} is locally well-posed in 
$$
\mathcal C([-T,T],H^{1/2}L^2_{\omega}) \cap L^{10/3}_{t\in [-T,T]}W^{1/2,10/3}_xL^2_\omega \cap L^5_{t\in [-T,T],x}L^2_\omega,
$$
for small data in $L^2_\omega H^s_x$ for times $T\sim 1$. We shall look for solutions satisfying the global bound:
\be \label{def:spacegwp}
X\in Y + \mathcal C_b(\R,H^{1/2}L^2_{\omega}) \cap L^{10/3}_{t}W^{1/2,10/3}_xL^2_\omega \cap L^5_{t,x}L^2_\omega,
\ee
which will therefore coincide with the above ones. We decompose the initial datum and solution: $Z_0 = X_0 - Y_0$ and $\tilde Z = X-Y$, giving the following Cauchy problem for $\tilde Z$:
\begin{equation}\label{eqZtilde}
\left \lbrace{\begin{array}{ll}
i\partial_t \tilde Z =(m -\Delta) \tilde Z +\left[w*\Big( \E(|\tilde Z|^2 )+ 2\textrm{Re }\E(\bar Y \tilde Z) \Big)\right] (Y+\tilde Z), \\
\tilde Z(t=0) = Z_0.
\end{array}} \right.
\end{equation}
To study the perturbation $\tilde Z$, let us denote by $V = \E(|\tilde Z|^2 )+ 2\textrm{Re }\E(\bar Y \tilde Z)$ the potential appearing above, that is a more convenient unknown than $\tilde Z$, yielding to the system:
$$
\left \lbrace{\begin{array}{ll}
i\partial_t \tilde Z =(m -\Delta) \tilde Z + (w*V) (Y+\tilde Z), \quad \quad \tilde Z(t=0) = Z_0, \\
V = \E(|\tilde Z|^2 )+ 2\textrm{Re }\E(\bar Y \tilde Z).
\end{array}} \right.
$$
We introduce the notations
$$
V'=w*V,
$$
and recall the notation \fref{def:WV}. Using Duhamel's formula, we aim at solving the fixed point equation:
$$
\left \lbrace{\begin{array}{l l}
\tilde Z = S(t) Z_0 + W_{V'}(\tilde Z+ Y),\\
V = \E(|\tilde Z|^2 )+ 2\textrm{Re }\E(\bar Y \tilde Z).
\end{array}} \right.
$$
The main issue is that $W_{V'}(Y)$ is less regular than other parts of $\tilde Z$. We thus plug back the first decomposition in the second equation, giving the next order expansion in $V$:
$$
\left \lbrace{ \begin{array}{ll}
\tilde Z = S(t) Z_0 + W_{V'}(Y) + W_{V'}(\tilde Z), \\
V = \E (|\tilde Z|^2) + 2 \textrm{Re } \E( \bar Y S(t) Z_0) +  2 \textrm{Re } \E(\bar Y W_{V'} (Y)) + 2 \textrm{Re } \E(\bar Y W_{V'}(\tilde Z)),\end{array}} \right.
$$
and then we write $\tilde Z = Z + W_{V'}(Y)$ which leads to the actual fixed point problem we will solve: 
\begin{equation}\label{fixedpoint}
\left \lbrace{\begin{array}{l l l}
Z &=& S(t) Z_0 + W_V^2(Y) + W_V(Z), \\
V &=& 2 \textrm{Re } \E(\bar Y S(t) Z_0) + 2 \textrm{Re } \E(\bar Y W_V(Y))+ \E(|Z|^2)  +
2  \textrm{Re } \E(\overline{W_V(Y)} Z + \bar Y W_V(Z))\\&&\quad \quad + \E (|W_V(Y)|^2) +2 \textrm{Re } \E(\bar Y W_V^2 (Y)).\end{array}} \right. 
\end{equation}
We write this fixed point problem in vectorial notation as
\be \label{fixedpoint2}
\begin{pmatrix} Z \\ V \end{pmatrix} = C_0 + L \begin{pmatrix} Z \\ V \end{pmatrix} + Q \begin{pmatrix} Z \\ V' \end{pmatrix}
\ee
where $C_0$ contains the terms depending on the initial datum, $L$ and $Q$ are the linear and quadratic terms:
$$
C_0 = \begin{pmatrix}
S(t) Z_0 \\ 2\textrm{Re }\E(\bar Y S(t) Z_0)
\end{pmatrix},
$$
\be \label{def:L2}
L \begin{pmatrix} Z \\ V \end{pmatrix} = \begin{pmatrix}
0 \\ L_2(V)
\end{pmatrix}, \quad \quad L_2(V)=2 \textrm{Re } \E(\bar Y W_{V'}(Y)),
\ee
\be \label{def:Q}
Q \begin{pmatrix} Z \\ V' \end{pmatrix} = \begin{pmatrix}
W_{V'}(Z) +  W_{V'}^2(Y) \\
\E(|Z|^2) + Q_1 (Z,V') +  Q_2(V')
\end{pmatrix},
\ee
with 
\be \label{def:Q1}
Q_1(Z,V') = 2  \textrm{Re } \E(\overline{W_{V'}(Y)} Z + \bar Y W_{V'}(Z)) 
\ee
and 
\be \label{def:Q2}
Q_2 (V') = \E (|W_{V'}(Y)|^2) +2 \textrm{Re } \E(\bar Y W_{V'}^2 (Y)).
\ee
We aim at solving the fixed point problem \fref{fixedpoint2} in a classical way by finding the right Banach space $\Theta$ for $Z,V$ and proving suitable estimates on the linear and nonlinear terms.

\subsubsection{Strategy of the proof} Here are the main points in our study of the problem \fref{fixedpoint2}.

\emph{A good unknown:} We mentioned that $V$ is a convenient unknown. This is firstly because $L_2(V)$ can be treated explicitly and $\text{Id}-L_2$ can be inverted (this idea was originally used in \cite{hainzl2005existence,lewsab2}). Secondly, in the course of the proof of Theorem \ref{th:main}, we take $Q_2(V')$ as a whole since cancellations occur between the different terms. The proof does not work if we treat separately $\E (|W_{V'}(Y)|^2)$ and $2 \textrm{Re } \E(\bar Y W_{V'}^2 (Y))$.

\emph{Standard Strichartz estimates:} We bound the linear and quadratic terms for large exponent space-time Lebesgue spaces using standard Strichartz estimates \fref{lem:strichartz}, employed here in the framework of a system of linear Schr\"odinger equation.

\emph{Additional linear decay at high spatial frequencies:} As the equilibrium $Y$ is a random superposition of independent linear waves $e^{i\xi.x}$, and from the commutator relation $[\Delta,e^{ix.\xi}]=e^{i\xi.x}(|\xi|^2-2\xi.\nabla)$, there holds (Lemma \ref{lem:expressionWVY}) for the perturbation generated when applying a potential $V$ to $Y$:
\bee
W_V(Y)(t,x)&=&-i\left[\int_0^t S(t-\tau) [V(\tau) Y(\tau)] d\tau\right](x) \\
&&\quad \quad =-i \int_{\mathbb R^3} dW(\xi) f(\xi) e^{-it(m+ |\xi|^2)+i\xi.x}  \int_{0}^t \left[S (t-\tau)  V(\tau)\right](x-2\xi (t-\tau)) .d\tau
\eee
At each fixed $\xi$ on the right, we notice transport at speed $2\xi$ in addition to the free evolution. This results in an additional spreading effect that averages out high frequencies, creating an additional damping and regularising effect (Proposition \ref{pr:freeandpotential} and Remark \ref{re:spreading}). We show the above function is in $L^{\infty}_tL^2_{x,\omega}$ if $V\in L^{2}_t\dot H^{-1/2}_x$ (instead of $V\in L^1_tL^2_x$ in the case of the usual inhomogeneous linear Schr\"odinger equation). This effect is present for all linear terms, and was used in \cite{CodS}.

\emph{Linear cancellation at low frequencies:} The two previous points control the part of the solution at high spatial frequencies. Low frequencies will however inflate with time. Indeed, given a potential $V$ at low spatial frequencies, so that $V(t,x)\approx V(t)$ does not depend on the spatial variable $x$ formally, the linear perturbation it generates is:
\be \label{id:formalcomputlowfreq}
W_V (Y)(t)=-i \int_{\mathbb R^3} dW(\xi) f(\xi) e^{-it(m+ |\xi|^2)+i\xi.x}  \int_{0}^t  V(\tau)d\tau =-iY(t)\int_0^t V(\tau)d\tau.
\ee
This quantity may grow for $V$ solely in $L^2_t$. But note that it is out of phase with $Y$. The linearised potential created by $W_V (Y)$ then displays the following cancellation since $V$ is real-valued:
$$
\Re \mathbb E (\bar YW_V (Y))=  \Re \mathbb E \left(\bar Y \left(-iY\int_0^t V(\tau)d\tau \right)\right)=- \Re \mathbb E \left(i |Y|^2\int_0^t V(\tau)d\tau \right)=0.
$$
Thus, the induced linear perturbation $W_V (Y)$ might grow, but the linear potential it creates in response, $\Re (\bar Y W_V (Y))$, does not. This cancellation is at the heart of the proof that the linearised operator $L_2:V\mapsto \Re (\bar Y W_{V'} Y)$, is in fact bounded on $L^2_{t,x}$ \cite{lewsab2}. Said differently, there is a triangular structure at low frequency between the parts $Z_1$ and $Z_2$ of the perturbation $Z$ that are respectively in phase and out of phase with $Y$: the part $Z_1$ decays due to dispersion and forces linearly $Z_2$, while $Z_2$ decays linearly but does not force $Z_1$, ensuring decay for $(Z_1,Z_2)$. Note that this is linked to the phase invariance \fref{id:phase}.

\emph{Quadratic cancellation at low frequencies:} The term $Q_2$ needs to be treated separately. It is not so complicated, following the previous discussion concerning high frequencies, to obtain that $Q_2(U,V)$ is integrable for large space-time Lebesgue exponent (Lemma \ref{prop:QuadV2.1} shows an $L^{10/3}_{t,x}$ integrability). The problem is rather here again to show that low frequencies do not inflate. Taking two potentials $V(t,x)\approx V(t)$ and $U(t,x)\approx U(t)$ at low spatial frequencies, and ignoring formally their dependance in the spatial variable in the next computation, we have the following cancellation at the heart of Proposition \ref{prop:QuadV2.2} (behind Proposition 4 in \cite{lewsab2}):
\bee
&& \overline{W_U(Y)}W_V(Y)+\bar YW_V\circ W_U (Y)+\bar YW_U\circ W_V (Y)\\
&= &|\bar Y|^2(t)\int_0^tU \cdot \int_0^t V-|Y|^2(t) \int_0^tds V(s)\int_0^sU(s')ds'-|Y|^2(t) \int_0^tds U(s)\int_0^sV(s')ds'=0
\eee
(dividing $(s,s')\in [0,t]^2$ into $s\leq s'$ and $s'>s$).
That is, the linearised potential generated by the second iterate $W_V\circ W_V$ cancels with the quadratic potential created by $W_V$. This would still be the case if the nonlinearity $|X|^2$ was replaced by a general one $f(|X|^2)$.

\emph{Technicalities:} To work at critical regularity $1/2$ in the framework of a system, and with a rough potential $w$ will sometimes raise technical issues. One has to refine using standard harmonic analysis tools and duality arguments a large part of the analysis of \cite{lewsab2,CodS}. This is in particular the case for the proof of Corollary \ref{corollaire}, as an endpoint Strichartz estimate for orthonormal systems is known to fail \cite{FLLS,FS} that forces us to prove additional bounds.\\

\emph{Summary} Here are the key points already mentioned and a brief comparison to related works \cite{CHP2,lewsab2,CodS}. Equation \eqref{eqZtilde} contains a $\R$ but not $\C$ linear term, a quadratic term, and a cubic term in $\tilde Z$. The linear term has been dealt with in all the above papers in a similar (if not the same) way as what is already done in the seminal work \cite{lewsab2}, by proving the invertibility of $\textrm{Id} - L_2$. 

In dimension higher than 4, quadratic Schr\"odinger-type equations scatter. The issue of singularities at low frequencies remains, but can be treated by either adding assumptions on the interaction potential $w$, as in \cite{CHP2} (allowing a large class of initial data thanks to the use of Strichartz estimates for orthogonal systems), or by using inhomogeneous Sobolev spaces, borrowed from the scattering for the Gross-Pitaevskii equation literature (\cite{GNT}), as in \cite{CodS}. 

In dimension 3 (or 2), a contraction argument using solely Strichartz estimates is not sufficient to prove scattering for quadratic Schr\"odinger-type equations. By rewriting the equation on $(\tilde Z,V)$ (or $(\gamma,V)$) and exploiting the structure of the nonlinearity, we are reduced again to a low frequencies singularity issue. In \cite{CHP2}, this fell under the hypothesis on $w$. However, the strategy we adopted in \cite{CodS}, getting a large class of $w$, does not work out in dimension 3. Therefore, we adopt here a different one, dealing with the less regular part of the solution separately, in the spirit of a normal form, as in \cite{GHN,GNT2,GNT3}, or in the spirit of \cite{lewsab2}, where the authors treat the "lower order iterates" differently from the "higher order ones". 

Note that compared to \cite{CodS}, we manage to reach an even greater class of interaction potentials, by proving specific Leibniz rules (Step 2 of the proof of Corollary \ref{corollaire}), but also by refining the estimates of the linear part and of the quadratic terms, see Remark \ref{rem:laRemarque}.   

A final word to conclude this summary is that Corollary \ref{corollaire} is intrinsically a density matrix result, not only because it deals with density matrices, but more meaningfully because it requires a specific tool for density matrices to be proven, that is Strichartz estimates for orthogonal systems, borrowed from \cite{FS}.

%
%

\subsection{Organization and aknowledgements}

The paper is organized as follows. We first set notations in Section \ref{sec:notations}. In Section \ref{sec:proof}, we prove the main Theorem \ref{th:main}, provided some linear and bilinear bounds hold true. These bounds are proved afterwards: Section \ref{sec:linear} deals with the linearised evolution around $Y$, Section \ref{sec:spefquad} provides bounds for the specific quadratic term $Q_2$, and Section \ref{sec:remaining} gives bounds for the remaining quadratic terms. In Section \ref{sec:coro} we prove Corollary \ref{corollaire}. In Appendix \ref{sec:dim2}, we make a remark about dimension 2. In Appendix \ref{sec:tech}, we give some further insight about Wiener integrals and recall some results about Strichartz estimates.\\

C. Collot is supported by the ERC-2014-CoG 646650 SingWave. A-S de Suzzoni is supported by ESSED ANR-18-CE40-0028. Part of this work was done when C. Collot was working at New York University, and he thanks the Courant Institute.

\section{Notations}\label{sec:notations}

\begin{notation}[Fourier transform] We define the Fourier and inverse Fourier transforms with the following constants : for $g \in \mathcal S(\mathbb R^d)$,
$$
\hat g (\xi) = \mathcal F (g) (\xi) = \int_{\R^d} g(x) e^{-ix\xi}dx, \quad \quad \mathcal F^{-1} (g) (x) = (2\pi)^{-d}\int_{\R^d} g(\xi) e^{ix\xi}d\xi.
$$
We use the following notation for Fourier multipliers:
$$
|\nabla |^s g=\mathcal F^{-1}(|\xi|^s \hat g),\quad \quad \langle \nabla \rangle^s g=\mathcal F^{-1}(\langle \xi \rangle^s \hat g).
$$
\end{notation}

\begin{notation}[Time-space norms] For $p,q\in [1,\infty]$, we denote by $L^p_tL^q_x$ the space $L^p(\R,L^q(\R^3))$. For $s\geq 0$ we denote by $L^p_tW^{s,q}_x$ the space $ L^p(\R,W^{s,q}(\R^3))$ which has norm:
$$
\| g \|_{L^p_tW^{s,q}_x}=\| \langle \nabla \rangle^s g \|_{L^p_tL^q_x}
$$
and recall this norm is equivalent to that of $L^p_tW^{k,q}_x$ for $s=k\in \mathbb N$. In the case $q=2$ use the notation $L^p_tH^s_x$ for $L^p_tW^{s,2}_x$ and $L^p_t \dot H^s_x$ for the space $L^p(\R, \dot H^s(\R^3))$. When $p=q$, we may write $L^p_{t,x}$ for $L^p_tL^p_x$.

\end{notation}

\begin{notation}[Probability-time-space norms]
For $p,q\in [1,\infty]$ and $s\geq 0$, we denote by $L^2_\omega L^p_t L^q_x$ the space $L^2(\Omega,L^p(\R,L^q(\R^d)))$, by $L^2_\omega L^p_t W^{s,q}_x$ the space $L^2(\Omega,L^p(\R,W^{s,q}(\R^d)))$. In the case $q=2$ we also write $L^2_\omega L^p_t H^s_x=L^2_\omega L^p_tW^{s,2}_x$ and $L^2_\omega L^p_t \dot H^s_x =L^2(\Omega,L^p(\R, \dot H^s(\R^3)))$.
\end{notation}

\begin{notation}[Time-space-probability norms]
For $p,q\in [1,\infty]$ we write 
\[
L^p_tL^q_x L^2_\omega= L^p(\R,L^q(\R^d(L^2(\Omega))).
\]
 For $p,q\neq (1,\infty)$ and $s\in \R$, we abuse notations and denote by $L^p_t W^{s,q}_x L^2_\omega $ the vector valued Bessel-potential space $(1-\Delta_x)^{-s/2}L^p(\R,L^{q}(\R^d,L^2(\Omega)))$ with norm:
$$
\| u \|_{L^p_t W^{s,q}_x L^2_\omega}= \| \langle \nabla \rangle^{s} u \|_{L^p_tL^q_xL^2_\omega}.
$$
Note from the extension of the Littlewood-Paley theory to $\ell^2$ valued functions, that $\| \langle \nabla \rangle^s u \|_{L^p_xL^2_\omega}\lesssim \| \langle \nabla \rangle^{s'} u \|_{L^p_xL^2_\omega}$ if $s\leq s'$, and that for $k\in \mathbb N$:
\be \label{id:vectorvaluedbesselpotential}
\sum_{j\leq k} \| \nabla^j u \|_{L^q_x L^2_\omega}\approx  \| \langle \nabla \rangle^k u \|_{L^q_xL^2_\omega}
\ee

In the case $q=2$ remark that $L^p_tW^{s,2}_xL^2_\omega =L^p_t L^2_\omega H^{s}_x$ by Fubini.

\end{notation}

\section{Proof of the main Theorem \ref{th:main}}\label{sec:proof}

The issue is to find the proper functional framework, which fits the study of the linearised problem and allows to bound the nonlinear terms. The heart of the proof is Proposition \ref{prop-principle}.

We introduce the notation
\be \label{id:defQ2}
Q_2(U,V) := 2\textrm{Re }\E\Big[\overline{W_V(Y)} W_U(Y) + \bar Y  \Big(W_V(W_U(Y)) + W_U(W_V(Y))\Big) \Big].
\ee
We now set 
\be \label{id:ThetaZ}
\Theta_Z = L^2_\omega\mathcal C(\R, H^{1/2}_x) \cap L^2_\omega L^5_{t,x} \cap L^2_\omega L^{10/3}_t W^{1/2,10/3}_x,
\ee
\be \label{id:ThetaV}
\Theta_V = L^2_{t}H^{1/2}_x \cap L^{5/2}_{t,x},
\ee
and 
\be \label{id:Theta0}
\Theta_0 = L^2_\omega H^{1/2}_x \cap L^{3/2}_x L^2_\omega.
\ee

\begin{proposition}\label{prop-principle} Assume that the spaces $\Theta_Z, \Theta_V$ and $\Theta_0$ defined by \fref{id:ThetaZ}, \fref{id:ThetaV} and \fref{id:Theta0} satisfy, with $\Theta = \Theta_Z \times \Theta_V$ the list of the following properties:
\begin{description}
\item [Minimal Space]  $\Theta_Z$ is continuously embedded in $\mathcal C (\R, L^2_\omega H^{1/2}) $.
\item [Initial datum] $\|C_0\|_\Theta \lesssim \|Z_0\|_{\Theta_0}$,
\item [Linear invertibility] $Id-L$ is invertible as a continuous operator from $\Theta$ to $\Theta$,
\item [Continuity of $w*$] $V\mapsto w* V$ is continuous on $\Theta_V$,
\item [First quadratic term on $Z$] $\|W_V(Z)\|_{\Theta_Z} \lesssim \|Z\|_{\Theta_Z} \|V\|_{\Theta_V}$,
\item [Second quadratic term on $Z$] $\|W_V(W_U(Y))\|_{\Theta_Z} \lesssim \|U\|_{\Theta_V} \|V\|_{\Theta_V}$,
\item [Embedding] $\Theta_Z \times \Theta_Z$ is continuously embedded in $\Theta_V$, as in for all $u,v \in \Theta_Z$, $\|E(uv)\|_{\Theta_V} \lesssim \|u\|_{\Theta_Z} \|v\|_{\Theta_Z}$,
\item [First quadratic term on $V$] $\|Q_1(Z,V)\|_{\Theta_V} \lesssim \|Z\|_{\Theta_Z} \|V\|_{\Theta_V}$,
\item [Second quadratic term on $V$] $
\big \|Q_2(U,V) \big \|_{\Theta_V} \lesssim \|U\|_{\Theta_V} \|V\|_{\Theta_V},
$
\item[Scattering spaces] $W_V(Y)$ belongs to $L^5_t W^{1/2,5}_xL^2_\omega$ if $V\in \Theta_V$, $\Theta_Z$ is continuously embedded in $L^{10/3}_t W^{1/2,10/3}_x L^2_\omega$ and
$$
\big\|\int_{0}^\infty S(t-\tau) \Big( (w*V(\tau)) Y(\tau) \Big)d\tau\big\|_{L^{\infty}_tL^3_x L^2_\omega} \lesssim \|V\|_{L^2_{t,x}},
$$
\end{description}
then the conclusions of Theorem \ref{th:main} hold true.
\end{proposition}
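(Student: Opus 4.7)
The plan is to solve the fixed point problem \eqref{fixedpoint2} by a standard contraction argument in $\Theta = \Theta_Z\times\Theta_V$, and then extract the scattering asymptotics from the Scattering spaces hypothesis. Using Linear invertibility of $\mathrm{Id}-L$ on $\Theta$, I rewrite \eqref{fixedpoint2} as
$$
(Z,V) \;=\; (\mathrm{Id}-L)^{-1}\bigl[\,C_0 + Q(Z,w*V)\,\bigr] \;=:\; \Phi(Z,V).
$$
Together with Continuity of $w*$, the six bilinear hypotheses ensure that $Q$ maps $\Theta_Z\times\Theta_V$ into $\Theta$ with a quadratic bound: the $\Theta_Z$-component $W_{V'}(Z)+W_{V'}^2(Y)$ is controlled by First and Second quadratic on $Z$ (writing $W_{V'}^2(Y)=W_{V'}(W_{V'}(Y))$), while the $\Theta_V$-component $\mathbb E(|Z|^2)+Q_1(Z,V')+Q_2(V')$ is controlled by Embedding (applied with $u=v=Z$), First quadratic on $V$, and the bilinear Second quadratic on $V$ applied with $U=V=V'$. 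Coupled with the Initial datum estimate on $C_0$, this yields
$$
\|\Phi(Z,V)\|_\Theta \;\le\; C_1\|Z_0\|_{\Theta_0} + C_2\bigl(\|Z\|_{\Theta_Z}^2+\|V\|_{\Theta_V}^2\bigr),
$$
and an analogous Lipschitz estimate by bilinearity. For $\|Z_0\|_{\Theta_0}\le \delta$ small enough, $\Phi$ becomes a contraction on the ball of radius $2C_1\delta$ in $\Theta$, producing a unique solution $(Z,V)\in\Theta$. Setting $\tilde Z:=Z+W_{V'}(Y)$ and $X:=Y+\tilde Z$ gives a solution of \eqref{eqZtilde}; the Minimal Space hypothesis together with the Scattering spaces statement $W_V(Y)\in L^5_tW^{1/2,5}_xL^2_\omega$ place $X$ in $Y+\mathcal C(\mathbb R,L^2_\omega H^{1/2})$ as prescribed by \eqref{def:spacegwp}.

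To obtain \eqref{id:mainresult} I start from the Duhamel identity
$$
Z(t) \;=\; S(t)Z_0 - i\int_0^t S(t-\tau)\bigl[\,V'(\tau)\bigl(Z(\tau)+W_{V'}(Y)(\tau)\bigr)\,\bigr]\,d\tau
$$
and define $Z_\pm := Z_0 - i\int_0^{\pm\infty} S(-\tau)\bigl[\,V'(Z+W_{V'}(Y))\,\bigr]\,d\tau$. Applying the quantitative versions of First and Second quadratic on $Z$ on the interval $[t,\pm\infty)$ shows that the integral converges in $L^2_\omega H^{1/2}_x$ and that $\|Z(t)-S(t)Z_\pm\|_{L^2_\omega H^{1/2}}\to 0$ as $t\to\pm\infty$ (the tail norms vanish by dominated convergence since the global $\Theta$-norms are finite). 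Combined with the decomposition $X-Y=Z+W_V(Y)$, this yields \eqref{id:mainresult}.

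For \eqref{id:mainresult2}, the third clause of Scattering spaces produces
$$
\tilde Z_\pm \;:=\; -i\int_0^{\pm\infty} S(-\tau)\bigl[\,V'(\tau)Y(\tau)\,\bigr]\,d\tau \;\in\; L^3_xL^2_\omega,
$$
with $S(\cdot)\tilde Z_\pm\in\mathcal C(\mathbb R,L^3_xL^2_\omega)$. Writing
$$
W_V(Y)(t) \;=\; S(t)\tilde Z_\pm + i\int_t^{\pm\infty} S(t-\tau)\bigl[\,V'(\tau)Y(\tau)\,\bigr]\,d\tau
$$
and applying that same clause with $V'$ replaced by $V'\mathbf 1_{[t,\pm\infty)}$, whose $L^2_{s,x}$ norm vanishes as $t\to\pm\infty$, establishes \eqref{id:mainresult2}.

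\textbf{Main obstacle.} The present proposition itself is essentially a clean bookkeeping step that pieces together the contraction and extracts scattering from the listed abstract estimates. The real difficulty lies in verifying those hypotheses — in particular the Linear invertibility of $\mathrm{Id}-L$ (which hinges on the first low-frequency cancellation described in the introduction), the bilinear bound on $Q_2$ (which relies on the second, quadratic low-frequency cancellation), and the last clause of Scattering spaces, whose asymmetric $L^\infty_tL^3_xL^2_\omega$ form is dictated by the failure of the endpoint Strichartz estimate for orthonormal systems.
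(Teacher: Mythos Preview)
Your proposal is correct and follows essentially the same two-step structure as the paper: a contraction argument on $(\mathrm{Id}-L)^{-1}[C_0+Q]$ in $\Theta$, followed by a Duhamel-tail extraction of $Z_\pm$ and $\tilde Z_\pm$. One small imprecision: for the convergence $\|Z(t)-S(t)Z_\pm\|_{L^2_\omega H^{1/2}}\to 0$ you invoke ``quantitative versions of First and Second quadratic on $Z$ on $[t,\pm\infty)$'', but those abstract hypotheses are global-in-time statements and do not localize for free; the paper instead uses the concrete embeddings listed under \textbf{Scattering spaces} (namely $W_{V'}(Y)\in L^5_tW^{1/2,5}_xL^2_\omega$ and $\Theta_Z\hookrightarrow L^{10/3}_tW^{1/2,10/3}_xL^2_\omega$) together with the dual Strichartz estimate \eqref{bd:dualstrichartz} and the fractional Leibniz rules \eqref{bd:fractionalleibniz1}--\eqref{bd:fractionalleibniz2} at the exponents $\tfrac{7}{10}=\tfrac12+\tfrac15=\tfrac25+\tfrac3{10}$, which is precisely why those two embeddings appear in the hypothesis list.
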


We prove Proposition \ref{prop-principle} below at the end of this section. Proving the main Theorem \ref{th:main} amounts to prove that the hypotheses of Proposition \ref{prop-principle} are satisfied, which is done in the remaining part of the paper.

\begin{proof}[Proof of Theorem \ref{th:main}]

Thanks to Proposition \ref{prop-principle}, we only have to check the hypotheses of this Proposition. Below are their meanings and their locations in the rest of the paper.
\begin{itemize}
\item \textbf{Minimal space} is a consequence of the definition \fref{id:ThetaZ} of $\Theta_Z$ and of the result of Step 1 in the proof of Lemma \ref{lem:strichartzrandom}. It states that $\Theta_Z$ has to be included in $\mathcal C(\R, L^2_\omega H^s_x)$. This space is the one in which local well-posedness holds.
\item \textbf{Initial datum} is satisfied from Lemma \ref{lem:strichartzrandom} and the bound \fref{bd:improvedrandomtopotential} of Proposition \ref{pr:freeandpotential}. It means that the constant source term $C_0$ is controlled by the initial datum.
\item \textbf{Linear invertibility} is satisfied from Proposition \ref{pr:bdL2}. It corresponds to the invertibility of the linear part of the equation.
\item \textbf{Continuity of $w*$} is satisfied from the very definition \fref{id:ThetaZ} of $\Theta_V$ and the hypothesis \fref{id:hpw}.
\item \textbf{First quadratic term on $Z$} is satisfied from \fref{bd:quadperturbation}.
\item \textbf{Second quadratic term on $Z$} is satisfied from the estimate \fref{bd:quadperturbation}.
\item \textbf{Embedding} is satisfied from \fref{bd:quadembed}.
\item \textbf{First quadratic term on $V$} is satisfied from \fref{bd:Q1}.
\item \textbf{Second quadratic term on $V$} is satisfied from Proposition \ref{pr:continuiteQ2}. Those last items are the bilinear estimates required to perform a contraction argument.
\item \textbf{Scattering spaces} is satisfied from Proposition \ref{pr:continuiteQ2}, the definition \fref{id:ThetaZ} of $\Theta_Z$, and \fref{bd:potentialtorandom} with $s=1/2$, $p=\infty$ and $q=3$. It corresponds to the description of the minimal spaces to which $W_V(Y)$ and $Z$ must belong to get scattering as described in Theorem \ref{th:main}.
\end{itemize}

\end{proof}

We now prove Proposition \ref{prop-principle} to conclude the proof of the main Theorem \ref{th:main}.

\begin{proof}[Proof of Proposition \ref{prop-principle}]

We prove it in two steps, in a standard fashion for a scattering problem in nonlinear dispersive evolution equations. First we show global existence in the vicinity of $Y$ and global bounds using a fixed point argument. Then we prove scattering by using one more time these bounds on the fixed point equation.

\textbf{Step 1} \emph{Global existence near $Y$}. Because of item Linear invertibility, we have that $(Id-L)^{-1}$ is a well-defined and continuous operator from $\Theta$ to itself. We thus consider in view of \fref{fixedpoint} and \fref{fixedpoint2} the following map
\be \label{id:fixedpointmainth}
A_{Z_0} \begin{pmatrix} Z \\V \end{pmatrix} = (Id-L)^{-1} \Big[ C_0 + Q\begin{pmatrix} Z \\V' \end{pmatrix} \Big].
\ee
and claim that for $Z_0$ small enough in $\Theta_0$, there exists $R$ such that $A_{Z_0}$ maps $B_{\Theta}(0,R)$ onto itself and is a contraction. We now show the claim.

We first show that $B_{\Theta}(0,R)$ is stable under $A_{Z_0}$. Assume $\begin{pmatrix} Z \\V \end{pmatrix} \in B_\Theta (0,R)$. We have 
$$
\big\| A_{Z_0} \begin{pmatrix} Z \\V \end{pmatrix} \big\|_{\Theta} \leq \|(Id-L)^{-1}\|_{\Theta \rightarrow \Theta} \Big( \|C_0\|_\Theta + \big\| Q \begin{pmatrix} Z \\V' \end{pmatrix} \big\|_\Theta \Big) \lesssim \Big( \|C_0\|_\Theta + \big\| Q \begin{pmatrix} Z \\V' \end{pmatrix} \big\|_\Theta \Big).
$$
Because of item Initial datum, there exists $C_1$ such that
$$
\big\| A_{Z_0} \begin{pmatrix} Z \\V \end{pmatrix} \big\|_{\Theta} \leq C_1 \|Z_0\|_{\Theta_0} +C_1\big\| Q \begin{pmatrix} Z \\V' \end{pmatrix} \big\|_\Theta .
$$
Because of items \begin{itemize}
\item Continuity of $w*$,
\item First quadratic term on $Z$,
\item Second quadratic term on $Z$ applied to $U=V$,
\item Embedding, 
\item First quadratic term on $V$,
\item Second quadratic term on $V$ applied to $U=V$,
\end{itemize}
 there exists $C_2$ such that
$$
\big\|A_{Z_0} \begin{pmatrix}
Z\\V
\end{pmatrix}\big\|_\Theta \leq C_1 \|Z_0\|_{\Theta_0} + C_2 R^2 \leq R,
$$
where the last inequality holds if $C_1 \|Z_0\|_{\Theta_0}\leq \frac{R}{2}$ and $C_2R\leq \frac12$. This is possible if $4C_1C_2 \|Z_0\|_{\Theta_0} \leq 1$, that is, for $Z_0$ small enough. Under these conditions, the ball $B_\Theta (0,R)$ is stable under $A_{Z_0}$.

We now show that for $R$ small enough (up to taking $\| Z_0\|_{\Theta_0}$ small enough) that $A_{Z_0}$ is contracting on $B_\Theta (0,R)$. 

Note that we have the identity
$$
W_{V+U}^2  = W_V^2 + W_V\circ W_U + W_U \circ W_V + W_U^2,
$$
hence $W_V\circ W_U$ is the bilinearization of $W_V^2$. We have similarly that
$$
Q_2(V+U) = Q_2(V) + Q_2(U,V) + Q_2(U)
$$
and $2Q_2(V) = Q_2(V,V)$ so that $Q_2(\cdot,\cdot)$ is the bilinearization of $Q_2(\cdot)$.

We take $(Z_1,V_1),(Z_2,V_2)\in B_\Theta(0,R)$. One first has the identity:
\be \label{id:AV1-V2inter}
A_{Z_0}\begin{pmatrix}
Z_1\\ V_1
\end{pmatrix} - A_{Z_0}\begin{pmatrix}
Z_2\\ V_2
\end{pmatrix} = (Id-L)^{-1} \left( Q\begin{pmatrix}
Z_1\\ V_1'
\end{pmatrix} - Q\begin{pmatrix}
Z_2\\ V_2'
\end{pmatrix} \right) .
\ee
Using the different items of Proposition \ref{prop-principle} in the same way as to prove the stability of the ball (except item Initial datum), we get that there exists $C_3$ such that
$$
\big \| A_{Z_0}\begin{pmatrix} Z_1\\V_1 \end{pmatrix} - A_{Z_0} \begin{pmatrix} Z_2\\V_2 \end{pmatrix} \big\|_{\Theta} \leq C_{3} R \big\| \begin{pmatrix} Z_1\\V_1 \end{pmatrix} -  \begin{pmatrix} Z_2\\V_2 \end{pmatrix} \big\|_{\Theta}
$$
and thus $A_{Z_0}$ is contracting if $C_{3}R <1$. 

Therefore, if $\|Z_0\|_{\Theta_0} \leq \frac1{4C_1C_2} $ and $\|Z_0\|_{\Theta_0} < \frac1{2C_1C_{3}}$, then from Banach-Picard fixed point Theorem there exists $R>0$ such that $A_{Z_0}$ admits a unique fixed point in $B_{\Theta}(0,R)$. We thus deduce that the Cauchy problem 
$$
\left \lbrace{ \begin{array}{l l}
i\partial_t Z = (m-\Delta) Z + (w*V) (Z+W_V(Y)) ,\quad \quad Z (t=0) = Z_0, \\
V = \E(|Z|^2) + 2\textrm{Re }\E (\bar Y Z) + 2\textrm{Re }\E (\overline{ W_V(Y)}(Y+ Z)) + \E(|W_V(Y)|^2)\\
\end{array}} \right.
$$
has a unique solution $Z \in \mathcal C(\R,L^2_\omega H^{1/2}_x)$ for small enough $Z_0$ and therefore, the Cauchy problem
$$
\left \lbrace{\begin{array}{c}
i\partial_t X  = -\Delta X + (w*\E(|X|^2))X \\
X(t=0) = X_0 := Z_0 + Y(t=0)
\end{array}} \right.
$$
is globally well-posed in the space \fref{def:spacegwp} for $Z_0$ small enough in $\Theta_0$.

\textbf{Step 2} \emph{Scattering to linear waves}. We now prove the scattering part, and only consider the case $t\rightarrow \infty$ as the case $t\rightarrow -\infty$ is similar. We rewrite the decomposition \fref{fixedpoint} using \fref{def:WV} as:
\be \label{def:Z+}
\begin{array}{l l l l l}
Z(t)&=& S(t)\Big( Z_0 -i \int_{0}^\infty S(-\tau) \Big[ V'(\tau) (W_{V'}(Y)(\tau) +Z(\tau)) \Big] d\tau \Big)\quad \quad \quad &\}&=S(t)Z_+\\
&&\quad \quad +iS(t ) \int_{t}^\infty S( - \tau) \Big[ V'(\tau) (W_{V'}(Y)(\tau) +Z(\tau)) \Big] d\tau \quad \quad \quad &\}&=S(t)\bar Z(t)
\end{array}
\ee
As a consequence of the standard dual Strichartz's \fref{bd:dualstrichartz} inequality and of the Christ-Kiselev Lemma \cite{ChKi}, for any $f\in L^2_\omega L^{10/7}_{t,x}$, we have 
\be \label{bd:strichartzdual}
\left\|  \int_0^t S(t-\tau)f(\tau)d\tau \right\|_{L^2_\omega L^{10/3}_{tx} \cap L^2_\omega  L^{5}_tL^{30/11}_x\cap L^2_\omega L^{\infty}_t L^2_x }\lesssim \| f \|_{ L^2_\omega L^{10/7}_{\tau,x}}.
\ee
Applying \fref{bd:strichartzdual} we obtain for $t\geq 0$, using Minkowski inequality as $2\leq \infty$ and $10/7\leq 2$:
$$
\big\| \bar Z(t )\big\|_{L^2_\omega H^{1/2}_x} \lesssim \|{\bf 1}_{[t,\infty)}V' W_{V'}(Y)\|_{L^{10/7}_\tau W^{1/2,10/7}_x L^2_\omega}+\| {\bf 1}_{[t,\infty)}V' Z\|_{L^2_\omega L^{10/7}_\tau W^{1/2,10/7}_x}.
$$
We now appeal to two Leibniz-type bounds for $0\leq s \leq 1$, $1<p,q_1,q_2,r_1,r_2<\infty$ with $1/p=1/q_1+1/r_1=1/q_2+1/r_2$. The first one is a classical result of Littlewood-Paley theory:
\be \label{bd:fractionalleibniz1}
\| \langle \nabla \rangle^{s}fg \|_{L^p_x}\lesssim \| \langle \nabla \rangle^{s} f\|_{L^{q_1}_x}\| g\|_{L^{r_1}_x}+\| \langle \nabla \rangle^{s} g\|_{L^{q_2}_x}\| f\|_{L^{r_2}_x},
\ee
and we claim the second one of vectorial type:
\be \label{bd:fractionalleibniz2}
\| \langle \nabla \rangle^{s}fg \|_{L^p_x L^2_\omega}\lesssim \| \langle \nabla \rangle^{s} f\|_{L^{q_1}_xL^2_\omega}\| \langle \nabla \rangle^{s} g\|_{L^{r_1}_x L^2_\omega}.
\ee
Indeed, \fref{bd:fractionalleibniz2} for $s=0$ is a direct consequence of H\"older inequality, while for $s=1$ we get it as a consequence of $\| \nabla (fg) \|_{L^p_x L^2_\omega}\leq \| \nabla f \|_{L^{q_1}_x L^2_\omega}\| g \|_{L^{r_1}_x L^2_\omega}+\| \nabla g \|_{L^{r_1}_x L^2_\omega}\| f \|_{L^{q_1}_x L^2_\omega}\lesssim \| f \|_{W^{1,q}_xL^2_\omega} \| g \|_{W^{1,q}_xL^2_\omega}$ (using Leibniz, H\"older and \fref{id:vectorvaluedbesselpotential}). We then get \fref{bd:fractionalleibniz2} for $0<s<1$ via complex interpolation. Using \fref{bd:fractionalleibniz2} with $1/2+1/5=7/10$ and \fref{bd:fractionalleibniz1} with $2/5+3/10=7/10$ and $1/2+1/5=7/10$ we bound:
\bee
\big\| \bar Z(t )\big\|_{L^2_\omega H^{1/2}_x}& \lesssim& \|{\bf 1}_{[t,\infty)}V'\|_{L^2_{\tau}H^{1/2}_x} \|W_{V'}(Y)\|_{L^{5}_\tau W^{1/2,5}_x L^2_\omega} + \|{\bf 1}_{[t,\infty)}V'\|_{L^{5/2}_{\tau,x} } \|Z\|_{L^2_\omega L^{10/3}_\tau W^{1/2,10/3}_x}\\
&& \quad + \|{\bf 1}_{[t,\infty)}V'\|_{L^{2}_{\tau}H^{1/2}_x } \|Z\|_{L^2_\omega L^{5}_{\tau,x}} \quad \longrightarrow 0
\eee
as $t\rightarrow \infty$, because $\| Z \|_{\Theta_Z}+\| V' \|_{\Theta_V}<\infty$ from Step 1 and item Continuity of $w*$, and be  due to item Scattering spaces. The same argument to bound $\bar Z(t)$ in $L^2_\omega H^{1/2}_x$ applies to bound $Z_+$ defined by \fref{def:Z+}, and we obtain that $Z_+$ belongs to $L^2_\omega H^{1/2}_x$. We have established that $ \|Z(t) - S(t)Z_+\|_{L^2_{\omega} H^{1/2}_x}\rightarrow 0$ as $t\rightarrow \infty$, which proves \fref{id:mainresult}. Finally, note that
$$
W_{V'}(Y) = -iS(t)\int_{0}^\infty S(-\tau) (V'(\tau) Y(\tau)) d\tau  + i\int_{t}^\infty S(t-\tau) (V'(\tau) Y(\tau)) d\tau.
$$ 
Because of items Scattering spaces and Continuity of $w*$, we have that for all $t \in \R$,
$$
\big\|\int_{A}^\infty S(t-\tau) (V'(\tau) Y(\tau)) d\tau \big\|_{L^3_x L^2_\omega} \lesssim \|{\bf 1}_{[A,\infty)} V\|_{L^2_{t,x}}.
$$
From this bound, the above decomposition and Christ-Kiselev's lemma, \cite{ChKi}, $W_{V'}(Y)$ can be written as 
$$
W_{V'}(Y) = S(t)\left( \tilde Z_++ o_{L^3_x L^2_\omega}(1)\right) = S(t) \tilde Z_+ + o_{L^3_x L^2_\omega}(1)\quad \quad \mbox{as }t\rightarrow \pm \infty.
$$
This shows \fref{id:mainresult2}, and ends the proof of Proposition \ref{prop-principle}.

\end{proof}

The rest of the paper is devoted mainly to prove that the hypotheses of Proposition \ref{prop-principle} hold true.

\section{Linear analysis}\label{sec:linear}

The linearised evolution problem is $i\partial_t Z=(m-\Delta) Z+2Re \mathbb E(\bar Y Z)Y$. Setting $w*V=w*2Re \mathbb E(\bar Y Z)$ to be the linearised potential generated by the perturbation $Z$, it becomes:
\be \label{id:pblineaire}
\left\{ \begin{array}{l l} i\partial_t Z=(m-\Delta) Z+(w*V)Y, \quad Z(t=0)=Z_0,\\
V=2Re \mathbb E(\bar Y Z). \end{array}\right.
\ee
The aim of this section is to show the following dispersive estimates.

\begin{proposition} \label{pr:lineaire}

Under the hypotheses on $f$ and $w$ of Theorem \ref{th:main}, for any $Z_0\in \Theta_0$, the solution $Z(t),V(t)$ of \fref{id:pblineaire} is such that $Z(t)=S(t)Z_0+W_{w*V}(Y)$ with:
$$
\| S(t)Z_0\|_{\Theta_Z}+\| V(t)\|_{\Theta_V}\leq C \| Z_0 \|_{\Theta_0}, \quad \|W_{w*V}(Y)\|_{L^{\infty}_tL^3_xL^2_\omega}+ \| \langle \nabla \rangle^{1/2}W_{w*V}(Y)\|_{ L^5_{t,x}L^2_\omega  } \leq C \| Z_0 \|_{\Theta_0}.
$$

\end{proposition}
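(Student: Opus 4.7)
\textbf{Reduction via Duhamel and the invertibility of $\mathrm{Id}-L_2$.}
I would first reduce Proposition \ref{pr:lineaire} to three separate inequalities. Duhamel applied to the first equation of \fref{id:pblineaire} gives $Z(t)=S(t)Z_0+W_{w*V}(Y)$, and inserting this in the relation $V=2\,\mathrm{Re}\,\E(\bar Y Z)$ yields
\[
V = F_0 + L_2(V), \qquad F_0(t,x) := 2\,\mathrm{Re}\,\E\bigl(\bar Y(t,x)\, S(t)Z_0(x)\bigr),
\]
with $L_2$ the operator from \fref{def:L2}. Since Proposition \ref{pr:bdL2} will give that $\mathrm{Id}-L_2$ is invertible on $\Theta_V$ with operator norm controlled by an absolute constant, the statement reduces to three bounds: (i) the source estimate $\|F_0\|_{\Theta_V}\lesssim\|Z_0\|_{\Theta_0}$; (ii) the free Strichartz $\|S(t)Z_0\|_{\Theta_Z}\lesssim\|Z_0\|_{L^2_\omega H^{1/2}_x}$; and (iii) the inhomogeneous bound for $W_{w*V}(Y)$ in terms of $\|V\|_{\Theta_V}$.

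\textbf{Main obstacle: the source estimate for $F_0$.}
The hardest step is (i), and especially the $L^{5/2}_{t,x}$ component, which is the delicate low-frequency estimate. Using the explicit form $Y(t,x)=\int f(\xi)e^{-it(m+|\xi|^2)+i\xi\cdot x}dW(\xi)$ and the Wiener-Itô isometry, I would recast $F_0$ as the density of a conjugated operator $e^{it\Delta}K_{Z_0,f}e^{-it\Delta}$, where $K_{Z_0,f}$ has an integral kernel built from $f(\xi)$ and the first-Wiener-chaos projection of $Z_0$ (contributions from higher chaoses are orthogonal to $\bar Y$ and vanish, or at worst are handled by an identical decomposition). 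This places $F_0$ within the framework of Strichartz estimates for orthonormal systems of Frank-Sabin \cite{FS}. The two components of $\Theta_0$ are then tailored to the two target spaces of $\Theta_V$: the $L^2_\omega H^{1/2}_x$ piece of $Z_0$ controls $\|F_0\|_{L^2_t H^{1/2}_x}$ via a Hilbert-Schmidt/Plancherel pairing (the half-derivative being transferred onto $S(t)Z_0$ through a fractional Leibniz rule, using the hypotheses $\int\langle\xi\rangle f^2<\infty$ and $\int|\xi|^{-1}|f\nabla f|<\infty$), while the $L^{3/2}_xL^2_\omega$ piece feeds a dual Frank-Sabin admissible pair to produce $\|F_0\|_{L^{5/2}_{t,x}}$. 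The interplay with $f$ and its derivative is where the second integrability assumption on $f$ in Theorem \ref{th:main} will be used.

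\textbf{Free Strichartz and inhomogeneous dispersive estimates.}
Estimate (ii) is standard: for each $\omega$, Strichartz gives $\|S(t)Z_0(\omega)\|_{L^p_tW^{1/2,q}_x}\lesssim \|Z_0(\omega)\|_{H^{1/2}_x}$ for any admissible pair $(p,q)$ in dimension $3$, and integrating over $\omega$ yields $\|S(t)Z_0\|_{L^2_\omega L^p_tW^{1/2,q}_x}\lesssim \|Z_0\|_{L^2_\omega H^{1/2}_x}$ (this is Lemma \ref{lem:strichartzrandom}). Applying it with $(p,q)=(10/3,10/3)$, $(\infty,2)$, and $(5,30/11)$ combined with the Sobolev embedding $W^{1/2,30/11}_x\hookrightarrow L^5_x$, covers the three norms defining $\Theta_Z$. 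Estimate (iii) is obtained directly from Proposition \ref{pr:freeandpotential}: item Continuity of $w*$ places $w*V$ in $\Theta_V$, and the bound \fref{bd:potentialtorandom} applied at $(s,p,q)=(1/2,5,5)$ and at $(s,p,q)=(0,\infty,3)$ yields respectively the $L^5_{t,x}L^2_\omega$ bound on $\langle\nabla\rangle^{1/2}W_{w*V}(Y)$ and the $L^\infty_tL^3_xL^2_\omega$ bound on $W_{w*V}(Y)$. Combined with the invertibility $V=(\mathrm{Id}-L_2)^{-1}F_0$ from Proposition \ref{pr:bdL2}, these three pieces yield Proposition \ref{pr:lineaire}.
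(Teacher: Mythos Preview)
Your reduction and steps (ii)--(iii) match the paper's scheme. The genuine gap is in the source estimate (i), where you have the roles of the two components of $\Theta_0$ reversed and invoke the wrong mechanism.

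You claim that $\|Z_0\|_{L^2_\omega H^{1/2}_x}$ controls $\|F_0\|_{L^2_tH^{1/2}_x}$ via a ``Hilbert--Schmidt/Plancherel pairing''. But the difficulty in $\|F_0\|_{L^2_tH^{1/2}_x}$ is \emph{time} integrability: why should $\E(\bar Y\,S(t)Z_0)$ decay in $t$ at all from mere $H^{1/2}_x$ regularity of $Z_0$? Cauchy--Schwarz plus Strichartz only yields $F_0\in L^5_{t,x}$ (this is \fref{bd:strichartzrandomtopotential}), not $L^2_t$. The $L^2_t$ integrability comes precisely from the spatial localisation $Z_0\in L^{3/2}_xL^2_\omega$, via the spreading/$TT^*$ argument of Proposition~\ref{pr:freeandpotential}: the bound \fref{bd:improvedrandomtopotential} gives $\|F_0\|_{L^2_{t,x}}\lesssim\|Z_0\|_{L^{3/2}_xL^2_\omega}$, and (together with the $\nabla Y$ variant) also the $L^2_tH^{1/2}_x$ control. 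Conversely, you assign $L^{3/2}_xL^2_\omega$ the job of producing $\|F_0\|_{L^{5/2}_{t,x}}$ via Frank--Sabin, but the density Strichartz exponents in $d=3$ satisfy $\tfrac{2}{p}+\tfrac{3}{q}=3$, which excludes $(p,q)=(5/2,5/2)$; there is no direct orthonormal Strichartz estimate placing $F_0$ in $L^{5/2}_{t,x}$. In the paper the $L^{5/2}_{t,x}$ bound is obtained instead by H\"older interpolation between $L^2_{t,x}$ (from $L^{3/2}_xL^2_\omega$) and $L^5_{t,x}$ (from $L^2_\omega H^{1/2}_x$). The Frank--Sabin estimates are used only later, in Section~\ref{sec:coro}, for the density--matrix Corollary.

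A minor point on (iii): \fref{bd:potentialtorandom} at $(s,p,q)=(1/2,5,5)$ gives $W_{w*V}(Y)\in L^5_{t,x}L^2_\omega$, not $\langle\nabla\rangle^{1/2}W_{w*V}(Y)\in L^5_{t,x}L^2_\omega$. The derivative on $W_{w*V}(Y)$ is obtained in Proposition~\ref{lem:estonWUY} by differentiating via \fref{id:differentiationWUY}, applying \fref{bd:continuityTT*3} to absorb the $\nabla Y$ piece, and then interpolating; this is where the hypothesis $\int_0^\infty(|h'|/r+|h''|)<\infty$ enters.
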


The proof of Proposition \ref{pr:lineaire} requires preliminary results and is done at the end of this section. In the first place we study the contributions in \fref{id:pblineaire} separately. To begin with, we study the free Schr\"odinger evolution of a random field $S(t)Z_0$. Then, we turn to the linearised potential field generated by such perturbation, that is, a potential field of the form $w*2Re\mathbb E(\bar YS(t)Z_0)$. Next, we analyse the perturbation $W_U(Y)$ generated by the effects of a potential field on the background equilibrium. Finally, the full linearised problem is studied appealing to the so-called linear response theory. More precisely, in response to an input potential $w*V$, the output potential created by the response of the equilibrium is $2Re \mathbb E(\bar Y W_{w*V}Y)$. This allows to solve \fref{id:pblineaire} since the potential $V$ is a good unknown and satisfies the fixed point equation:
\be 
V=2Re \mathbb E(\bar Y S(t)Z_0)+L_2(V)=2Re \mathbb E(\bar Y (S(t)Z_0+W_{w*V}(Y)))
\ee
compatible with notation \ref{def:L2}. The properties of $L_2$ and the invertibility of $Id-L_2$ are studied in the last subsection.

\subsection{Free evolution of random fields}

We first establish continuity and dispersive estimates for the free evolution of random fields. For the problem at hand, we can use homogeneous Strichartz estimates at regularity $1/2$ since it is less than $d/2=3/2$.

\begin{lemma} \label{lem:strichartzrandom}

For all $Z_0 \in L^2_\omega H^{1/2}_x$, we have $S(t)Z_0\in C_t(L^2_\omega H^{1/2}_x)$ and moreover:
\be \label{bd:strichartzrandom}
\displaystyle \| S(t)Z_0\|_{\Theta_Z}=\|S(t)Z_0\|_{L^2_\omega C_t( H^{1/2}_x)\cap  L^2_\omega L^5_{t,x}}+\| \langle \nabla \rangle^{\frac 12}S(t)Z_0\|_{ L^2_\omega L^{10/3}_{t,x}} \lesssim \|Z_0\|_{L^2_\omega H^{1/2}_x}.
\ee
\end{lemma}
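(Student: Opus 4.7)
The plan is to reduce Lemma \ref{lem:strichartzrandom} to classical deterministic Strichartz estimates in $\mathbb R^3$ applied for each fixed $\omega\in \Omega$, and then to integrate the resulting $\omega$-pointwise bound against the probability measure. Because $L^2_\omega$ appears on the \emph{outside} of every time-space norm in the statement, the reduction is essentially loss-free: for any time-space norm $X$, a pointwise in $\omega$ bound of the form $\|S(t)Z_0(\omega)\|_X\lesssim \|Z_0(\omega)\|_{H^{1/2}_x}$ yields $\|S(t)Z_0\|_{L^2_\omega X}\lesssim \|Z_0\|_{L^2_\omega H^{1/2}_x}$ simply by squaring and integrating in $\omega$.

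First I would handle the continuity and the $L^\infty_t H^{1/2}_x$ part. Since $S(t)=e^{-it(m-\Delta)}$ is a strongly continuous unitary group on $H^{1/2}(\mathbb R^3)$, for almost every $\omega$ the orbit $t\mapsto S(t)Z_0(\omega)$ is continuous into $H^{1/2}_x$ and isometric, with $\|S(t)Z_0(\omega)\|_{H^{1/2}_x}=\|Z_0(\omega)\|_{H^{1/2}_x}$ for all $t$. Dominated convergence in $\omega$, against the integrable majorant $4\|Z_0(\omega)\|_{H^{1/2}_x}^2$, would upgrade this to continuity of $\mathbb R\to L^2_\omega H^{1/2}_x$ and yield the $L^2_\omega L^\infty_t H^{1/2}_x$ bound with sharp constant.

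For the space-time bounds, I would invoke two $L^2$-admissible Strichartz estimates in $d=3$ (as recalled in Appendix \ref{sec:tech}): the diagonal pair $(10/3,10/3)$ and the non-diagonal pair $(5,30/11)$, which both satisfy $\tfrac{2}{p}+\tfrac{3}{q}=\tfrac{3}{2}$, giving
\[
\|S(t)u_0\|_{L^{10/3}_{t,x}}+\|S(t)u_0\|_{L^5_tL^{30/11}_x}\lesssim \|u_0\|_{L^2_x}.
\]
Applied with $u_0=\langle\nabla\rangle^{1/2}Z_0(\omega)$, using that $\langle\nabla\rangle^{1/2}$ commutes with $S(t)$, these produce the desired $L^{10/3}_{t,x}$ bound on $\langle\nabla\rangle^{1/2}S(t)Z_0(\omega)$ directly, as well as an $L^5_tW^{1/2,30/11}_x$ bound on $S(t)Z_0(\omega)$. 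The Sobolev embedding $W^{1/2,30/11}(\mathbb R^3)\hookrightarrow L^5(\mathbb R^3)$, consistent with the scaling identity $\tfrac{1}{5}=\tfrac{11}{30}-\tfrac{1}{6}$, then converts the latter into the required $L^5_{t,x}$ bound. Squaring these $\omega$-pointwise estimates and integrating in $\omega$ finishes.

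I do not expect any genuine obstacle: the lemma is a routine transfer of deterministic Strichartz theory to the probabilistic setting, made transparent by the fact that $L^2_\omega$ sits outside the time-space norms. The two minor points to verify are the measurability in $\omega$ of the deterministic space-time norms (standard via the explicit Fourier representation $S(t)Z_0(\omega,x)=(2\pi)^{-3}\int e^{ix\cdot\xi-it(m+|\xi|^2)}\widehat{Z_0}(\omega,\xi)\,d\xi$) and the choice of a suitable $L^2$-admissible pair from which the $L^5_{t,x}$ endpoint at $H^{1/2}$ regularity can be reached by Sobolev embedding, which is why I prefer to pass through $(5,30/11)$ rather than appealing to the $\dot H^{1/2}$-admissible pair $(5,5)$ directly.
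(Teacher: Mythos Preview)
Your proposal is correct and follows essentially the same approach as the paper: both reduce to deterministic Strichartz estimates applied $\omega$-pointwise, use the admissible pairs $(10/3,10/3)$ and $(5,30/11)$ together with the Sobolev embedding $W^{1/2,30/11}(\mathbb R^3)\hookrightarrow L^5(\mathbb R^3)$, and handle the continuity claim $S(t)Z_0\in C_t(L^2_\omega H^{1/2}_x)$ via dominated convergence from the almost-sure continuity in $H^{1/2}_x$. The paper isolates the embedding $L^2_\omega C_t H^{1/2}_x\hookrightarrow C_t L^2_\omega H^{1/2}_x$ as a separate preliminary step, but the content is the same.
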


\begin{proof}

\textbf{Step 1} \emph{An embedding.} We claim that $L^2_\omega C(\mathbb R,H^{1/2}_x)$ embeds continuously in $ C(\mathbb R,L^2_\omega H^{1/2}_x)$. Indeed, $\|u\|_{L^{\infty}_t L^2_\omega H^{1/2}_x}\leq \|u\|_{L^2_\omega L^{\infty}_tH^{1/2}_x}$ by Minkowski's inequality. Moreover, as almost surely in $\Omega$, $u(t')\rightarrow u(t)$ in $H^{1/2}_x$ as $t'\rightarrow t$, we obtain that $\| u(t')-u(t)\|_{L^2_\omega H^{1/2}_x}\rightarrow 0$ as $t'\rightarrow t$ as an application of Lebesgue's dominated convergence Theorem.

\textbf{Step 2} \emph{Proof of the Lemma}. This is a consequence of usual Strichartz and continuity estimates \fref{bd:homostrichartz} for $S(t)$ and Sobolev embeddings. First, $S(t)Z_0:\mathbb R\times \Omega \times \mathbb R^3\rightarrow \mathbb C$ is measurable (using for example the fact that $Z_0:\Omega \times \mathbb R^3\rightarrow \mathbb C$ is measurable with $\| Z_0\|_{L^2_\omega L^2_x}<\infty$ if and only if the same holds for $\hat Z_0$). Since $\| e^{it\Delta}f\|_{C_t(H^{1/2}_x)}\leq \| f \|_{H^{1/2}_x}$ for any $f\in H^{1/2}_x$, we then obtain $\|S(t)Z_0\|_{L^2_\omega C_t( H^{1/2}_x)}\leq \| f \|_{L^2_\omega H^{1/2}_x}$. Hence $S(t)Z_0\in C_t(L^2_\omega H^{1/2}_x)$ by applying the embedding of Step 1. Next, $(10/3,10/3)$ and $(5,30/11)$ are admissible pairs for the usual Strichartz estimates \fref{bd:homostrichartz} so that:
$$
\|S(t)Z_0\|_{L^2_\omega L^{10/3}_tL^{10/3}_x}+\|S(t)Z_0\|_{L^2_\omega L^{5}_tL^{30/11}_x} \lesssim \| Z_0 \|_{L^2_\omega L^2_x}.
$$
Hence we obtain from the first inequality above that:
$$
\|S(t)Z_0\|_{ L^2_\omega L^{10/3}_tL^{10/3}_x } \lesssim \| Z_0 \|_{L^2_\omega L^2_x}, \quad \quad \||\nabla|^{\frac 12}S(t)Z_0\|_{L^2_\omega L^{10/3}_tL^{10/3}_x}\lesssim \|Z_0\|_{L^2_\omega \dot H^{\frac 12}_x},
$$
while the second inequality with the embedding of $\dot W^{1/2,30/11}(\mathbb R^3)$ into $L^{5}(\mathbb R^3)$ give:
$$
\|S(t)Z_0\|_{L^2_\omega L^{5}_tL^{5}_x}\lesssim \| |\nabla|^{\frac 12}S(t)Z_0\|_{L^2_\omega L^{5}_tL^{30/11}_x}\lesssim  \| Z_0\|_{L^2_\omega \dot H^{\frac 12}_x}.
$$
The three bounds above establish the Lemma.

\end{proof}

It is instructive (and useful in the sequel) to consider initial perturbations of a specific form, which represent a perturbation of the distribution function $f$ of $Y_0$:
\be \label{def:Zg}
Z_g(\omega , x)=\int_{\mathbb R^3} dW(\xi)e^{i\xi.x}g(x,\xi).
\ee
Above, $g$ is the normalised (with respect to $Y_0$) distribution function of the perturbation $Z_g$, and encodes its correlation with $Y_0$ since there holds:
$$
\mathbb E\left(\bar Y_0(x)Z_g(y) \right)=\int_{\mathbb R^3} e^{i\xi.(y-x)}g(y,\xi)f(\xi)d\xi.
$$
Their regularity is measured through that of the normalised distribution function via the adapted homogeneous Sobolev spaces $\dot{\tilde H}^s_{x,\omega}$ associated to the norm:
$$
\| Z_g\|_{\dot{\tilde{H}}^s_{x,\omega}}= \| g \|_{L^2_\xi \dot H^s_x}.
$$
Due to the commutator relation $[\Delta,e^{ix.\xi}]=e^{i\xi.x}(|\xi|^2-2\xi.\nabla)$, the free Schr\"odinger evolution will induce an additional transport with speed $\xi$ on each "component" $dW(\xi)$ and eventually result in a spreading effect. We introduce the operator:
$$
S_\xi (t)(u)=(S(t)u)(x-2\xi t), \quad \quad \mathcal F(S_\xi(t) u)(\eta)=e^{-it|\eta|^2-2i\eta.\xi}\hat u(\eta)=e^{-it (-\Delta -2i \xi \cdot \grad)}\hat u(\eta).
$$

\begin{lemma}\label{lem-Fouriertrick} For all $U \in \mathcal S'(\R^3)$, $t,\tau\in \mathbb R$, and $\xi \in \mathbb R^3$ we have
\be \label{id:comutateurphaseschrod}
S(t-\tau)\Big[  e^{-i\tau(m+\xi^2) + ix\cdot \xi }U  \Big]= e^{-it(m+|\xi|^2) + ix\cdot \xi} S_\xi (t-\tau) U.
\ee

\end{lemma}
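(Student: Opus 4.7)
The identity is a routine Fourier computation; the only delicate point is the careful bookkeeping of phase factors. First I would pull out the constant (in $x$) phase $e^{-i\tau(m+|\xi|^2)}$ from $S(t-\tau)$, reducing the claim to computing $S(t-\tau)[e^{ix\cdot\xi}U]$.

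Next, on the Fourier side, modulation gives $\mathcal F_x[e^{ix\cdot\xi}U](\eta) = \hat U(\eta-\xi)$, and $S(s)$ acts as multiplication by $e^{-is(m+|\eta|^2)}$, so
\[
\mathcal F_x\bigl(S(t-\tau)[e^{ix\cdot\xi}U]\bigr)(\eta) = e^{-i(t-\tau)(m+|\eta|^2)}\hat U(\eta-\xi).
\]
The change of variables $\eta = \xi + \eta'$ in the inverse Fourier integral, combined with the expansion $|\xi+\eta'|^2 = |\xi|^2 + 2\xi\cdot\eta' + |\eta'|^2$, factors out $e^{ix\cdot\xi}e^{-i(t-\tau)(m+|\xi|^2)}$ and leaves the inverse Fourier transform of $e^{-i(t-\tau)|\eta'|^2 - 2i(t-\tau)\xi\cdot\eta'}\hat U(\eta')$, which is precisely $S_\xi(t-\tau)U$ by the multiplier description introduced just before the lemma. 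Reassembling the prefactors and using $e^{-i\tau(m+|\xi|^2)}\cdot e^{-i(t-\tau)(m+|\xi|^2)} = e^{-it(m+|\xi|^2)}$ yields the claimed identity.

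An equivalent approach I would mention is to observe that both sides, as functions of $t$, solve $i\partial_t Z = (m-\Delta)Z$ with the common value $e^{-i\tau(m+|\xi|^2)+ix\cdot\xi}U$ at $t=\tau$, and then invoke uniqueness for the free Schr\"odinger evolution in $\mathcal S'(\mathbb R^3)$. Verifying that the right-hand side satisfies the equation uses the commutator identity $-\Delta(e^{ix\cdot\xi}g) = e^{ix\cdot\xi}(|\xi|^2 g - 2i\xi\cdot\nabla g - \Delta g)$ together with the transport-Schr\"odinger equation $i\partial_t v = -\Delta v - 2i\xi\cdot\nabla v$ satisfied by $v(t,\cdot) = S_\xi(t-\tau)U$.

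There is no genuine analytic obstacle — the manipulations involve only modulation, translation, and a unitary Fourier multiplier, all well-defined on $\mathcal S'(\mathbb R^3)$. The only thing to watch out for is that the mass phase $e^{-itm}$ contained in $S(t)$ must be tracked consistently so that the $\tau$-contribution and the $t-\tau$-contribution recombine into the single prefactor $e^{-it(m+|\xi|^2)}$ on the right-hand side.
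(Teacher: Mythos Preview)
Your proposal is correct and follows essentially the same route as the paper: a direct Fourier-side computation using the modulation identity $\mathcal F[e^{ix\cdot\xi}U](\eta)=\hat U(\eta-\xi)$, the multiplier action of $S(t-\tau)$, and the phase expansion $|\eta|^2=|\xi|^2+2\xi\cdot(\eta-\xi)+|\eta-\xi|^2$ to recognise $S_\xi(t-\tau)$. The paper carries this out for $U\in\mathcal S(\mathbb R^3)$ and then passes to $\mathcal S'$ by duality, whereas you (correctly) observe that all operations are already well-defined on $\mathcal S'$; your alternative uniqueness argument is also valid but not used in the paper.
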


\begin{proof} If $U\in \mathcal  \mathcal S(\R^3)$, we pass in Fourier mode to obtain
\bee
\mathcal F\Big(S(t-\tau)\Big[ U  e^{-i\tau(m+|\xi|^2) + ix\cdot \xi}  \Big]\Big)(\eta) &=& e^{-i(t-\tau)(m+|\eta|^2)} \mathcal F \Big( U  e^{-i\tau(m+|\xi|^2) + ix\cdot \xi}\Big)(\eta)\\
&=& e^{-i(t-\tau)(m+|\eta|^2)-i\tau(m+|\xi|^2)}\mathcal F \Big( U  e^{ix\cdot \xi}\Big)(\eta) \\
&=& e^{-i(t-\tau)(m+|\eta|^2)-i\tau(m+|\xi|^2)}\hat U( \eta - \xi).
\eee
In the expression above we compute:
$$
-i(t-\tau)(m+|\eta|^2)-i\tau(m+|\xi|^2) =-it (m+|\xi|^2)-i(t-\tau)(|\eta-\xi|^2-2\xi.(\eta-\xi)).
$$
Hence we recognise in the previous identity:
\bee
\mathcal F\Big(S(t-\tau)\Big[ U  e^{-i\tau(m+|\xi|^2) + ix\cdot \xi}  \Big]\Big)(\eta) &=& e^{-it (m+|\xi|^2)-i(t-\tau)(|\eta-\xi|^2-2\xi.(\eta-\xi))}\hat U( \eta - \xi) \\
&=&e^{-it(m+|\xi|^2) } \left(e^{-i(t-\tau) (-\Delta -2i \xi \cdot \grad)} \hat U\right)(\eta-\xi) \\
&=& e^{-it(m+|\xi|^2) }\mathcal F\left(e^{i\xi.x}e^{-i(t-\tau) (-\Delta -2i \xi \cdot \grad)} U  \right).
\eee
Applying the inverse Fourier transform proves \fref{id:comutateurphaseschrod}, and the result for $U\in \mathcal S'(\mathbb R^3)$ follows by duality.
\end{proof}

The spaces $\dot{\tilde{H}}^s_{x,\omega}$ are well adapted to measure the regularity of the free evolution of random fields of the form $Z_g$, as standard Strichartz estimates hold true.

\begin{lemma} \label{lem:strichartzZg}

For any $0\leq s < 3/2$ and $2\leq p,q\leq \infty$ satisfying $\frac2{p}+\frac3{q} = \frac 32-s$, for any $Z_g\in \dot{\tilde H}^{s}_{x,\omega}$ there holds:
$$
\| S(t)Z_g \|_{L^p_tL^q_xL^2_\omega}\leq C \| Z_g \|_{\dot{\tilde H}^{s}_{x,\omega}}. 
$$

\end{lemma}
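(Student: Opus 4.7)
The plan is to reduce the estimate to the standard homogeneous Strichartz estimates for $S(t)$ by exploiting the randomness through the Itô/Wiener isometry. Write $Z_g(x)=\int_{\mathbb R^3} e^{ix\cdot\xi}g(x,\xi)\,dW(\xi)$. Applying $S(t)$ and moving it inside the stochastic integral (justified by approximation and linearity), we use Lemma \ref{lem-Fouriertrick} with $\tau=0$ to get
$$
S(t)Z_g(x)=\int_{\mathbb R^3} e^{-it(m+|\xi|^2)+ix\cdot\xi}\,\bigl(S_\xi(t)g(\cdot,\xi)\bigr)(x)\,dW(\xi).
$$
Since the integrand against $dW$ is deterministic in $\omega$, the Wiener isometry gives the pointwise (in $(t,x)$) identity
$$
\bigl\|S(t)Z_g(x)\bigr\|_{L^2_\omega}^{2}=\int_{\mathbb R^3}\bigl|\bigl(S_\xi(t)g(\cdot,\xi)\bigr)(x)\bigr|^{2}\,d\xi.
$$

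Next I want to pull the $L^p_tL^q_x$ norm inside the $L^2_\xi$ integration. Since $p,q\geq 2$, Minkowski's integral inequality yields
$$
\bigl\|S(t)Z_g\bigr\|_{L^p_tL^q_xL^2_\omega}\leq\Bigl(\int_{\mathbb R^3}\bigl\|S_\xi(t)g(\cdot,\xi)\bigr\|_{L^p_tL^q_x}^{2}\,d\xi\Bigr)^{1/2}.
$$
Now the key observation is that $S_\xi(t)u(x)=(S(t)u)(x-2\xi t)$ is just a (time-dependent) translation of $S(t)u$, hence $\|S_\xi(t)u\|_{L^q_x}=\|S(t)u\|_{L^q_x}$ for each fixed $t$, so the transport contributes nothing to the $L^p_tL^q_x$ norm and
$$
\bigl\|S_\xi(t)g(\cdot,\xi)\bigr\|_{L^p_tL^q_x}=\bigl\|S(t)g(\cdot,\xi)\bigr\|_{L^p_tL^q_x}.
$$

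It remains to apply the standard homogeneous Strichartz estimate for the free Schrödinger flow on $\mathbb R^3$ at regularity $s$: under the scaling condition $\tfrac{2}{p}+\tfrac{3}{q}=\tfrac{3}{2}-s$ and $2\leq p,q\leq\infty$ (the Keel--Tao endpoint being admissible in dimension $3$, and the forbidden cases excluded by the constraint $s<3/2$), one has $\|S(t)h\|_{L^p_tL^q_x}\lesssim\|h\|_{\dot H^s_x}$; I would cite the Strichartz estimate (Lemma \ref{lem:strichartz} in the appendix) with Sobolev embedding for non-admissible endpoints if necessary. Inserting this in the previous bound,
$$
\bigl\|S(t)Z_g\bigr\|_{L^p_tL^q_xL^2_\omega}\lesssim\Bigl(\int_{\mathbb R^3}\|g(\cdot,\xi)\|_{\dot H^s_x}^{2}\,d\xi\Bigr)^{1/2}=\|g\|_{L^2_\xi\dot H^s_x}=\|Z_g\|_{\dot{\tilde H}^s_{x,\omega}},
$$
which is the claimed inequality. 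The only mildly delicate point is justifying that $S(t)$ commutes with the Wiener integral, which is standard by approximating $g$ by simple functions in $L^2_\xi\dot H^s_x$ and passing to the limit using the isometry; no other obstacle arises since Minkowski applies freely in the range $p,q\geq 2$.
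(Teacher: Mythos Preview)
Your proof is correct and follows essentially the same route as the paper: express $S(t)Z_g$ via Lemma~\ref{lem-Fouriertrick} and the Wiener isometry \eqref{id:correlationgaussian}, then apply Minkowski (since $p,q\geq 2$) to swap $L^p_tL^q_x$ inside $L^2_\xi$, and finish with the homogeneous Strichartz estimate. Your extra remark that $S_\xi(t)$ differs from $S(t)$ only by a spatial translation, hence has the same $L^p_tL^q_x$ norm, is a nice clarification that the paper leaves implicit.
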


\begin{proof}
Using \fref{id:comutateurphaseschrod}, \fref{id:fubinischrodingerwiener} and \fref{id:correlationgaussian} we get that
$$
S(t)Z_g=\int_{\mathbb R^3} dW(\xi)e^{-it(m+|\xi|^2)+ix.\xi}S_\xi (t)g(\xi,x), \quad \quad \mathbb E(|S(t)Z_g|^2)=\int_{\mathbb R^3} |S_\xi (t)g(\xi,x)|^2d\xi.
$$
Therefore, applying Minkowski inequality and then standard homogeneous Strichartz estimates \fref{bd:dualstrichartz} give the desired result:
$$
\| S(t)Z_g \|_{L^p_tL^q_xL^2_\omega}=  \| S_{\xi}(t)(g(\xi,x))\|_{L^p_tL^q_xL^2_\xi} \lesssim \| S_{\xi}(t)(g(\xi,x))\|_{L^2_\xi L^p_tL^q_x}  \lesssim \| g \|_{L^2_\xi\dot H^s_x}=\| Z_g\|_{\dot{\tilde H}^s_{x,\omega}}.
$$
\end{proof}

\subsection{Potential induced by a perturbation and perturbation induced by a potential}

As a direct consequence of Lemma \ref{lem:strichartzrandom} and of the fact that the expectation of $|Y|^2$ is uniformly bounded, we obtain the following bounds for the potential generated by free evolution of random fields.

\begin{lemma}

For any $Z_0\in L^2_\omega H^{1/2}_x$ or $Z_0\in L^2_\omega H^{1}_x$ there holds:
\be \label{bd:strichartzrandomtopotential}
\| \mathbb E(\bar YS(t)Z_0)\|_{L^5_tL^5_x}\lesssim \| Z_0 \|_{L^2_\omega H^{1/2}_x}.
\ee

\end{lemma}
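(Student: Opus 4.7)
The plan is to reduce the estimate to a bound on $S(t)Z_0$ via a pointwise Cauchy--Schwarz inequality in $\omega$, and then conclude by the homogeneous Strichartz estimate of Lemma~\ref{lem:strichartzrandom}.

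First, I would use that $Y(t,x)$ is, for each fixed $(t,x)$, a centred Gaussian with constant variance
$$
\mathbb E(|Y(t,x)|^2)=\int_{\mathbb R^3}|f(\xi)|^2d\xi,
$$
and in particular $\|Y(t,x)\|_{L^2_\omega}$ is a finite constant independent of $(t,x)$. Applying Cauchy--Schwarz in $\omega$ pointwise in $(t,x)$ therefore yields
$$
\big|\mathbb E(\bar Y(t,x)\,S(t)Z_0(t,x))\big|\le \|Y(t,x)\|_{L^2_\omega}\,\|S(t)Z_0(t,x)\|_{L^2_\omega}\lesssim \|S(t)Z_0(t,x)\|_{L^2_\omega}.
$$

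Next, I take the $L^5_tL^5_x$ norm on both sides and use Minkowski's inequality (valid since $5\ge 2$) to exchange the $L^5_{t,x}$ and $L^2_\omega$ integrations:
$$
\|\mathbb E(\bar Y S(t)Z_0)\|_{L^5_tL^5_x}\lesssim \|S(t)Z_0\|_{L^5_{t,x}L^2_\omega}\le \|S(t)Z_0\|_{L^2_\omega L^5_{t,x}}.
$$
Finally, the $L^2_\omega L^5_{t,x}$ norm of $S(t)Z_0$ is controlled by $\|Z_0\|_{L^2_\omega H^{1/2}_x}$ as a direct consequence of Lemma~\ref{lem:strichartzrandom} (the pair $(5,5)$ appears explicitly in the bound \fref{bd:strichartzrandom}). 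This yields the desired estimate.

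There is no serious obstacle here: all ingredients, namely stationarity of $Y$ in probability, Minkowski's inequality for $L^5$ vs $L^2_\omega$, and the homogeneous Strichartz estimate for random fields, have already been established. The same argument automatically covers the case $Z_0\in L^2_\omega H^1_x$ since $H^1_x\hookrightarrow H^{1/2}_x$ and the right-hand side is formulated in terms of the $H^{1/2}_x$ norm.
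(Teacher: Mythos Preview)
Your proof is correct and follows essentially the same approach as the paper: pointwise Cauchy--Schwarz in $\omega$, the constant variance of $Y$, Minkowski to pass from $L^5_{t,x}L^2_\omega$ to $L^2_\omega L^5_{t,x}$, and then Lemma~\ref{lem:strichartzrandom}. The paper's argument is identical in substance, just slightly more compressed.
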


\begin{proof}

By Cauchy-Schwarz, H\"older, and Minkowski inequalities
\bee
\|\mathbb E(\bar Y S(t) Z_0)\|_{L^{5}_{t,x}} &\leq& \| \|Y\|_{L^2_\omega} \| S(t) Z_0\|_{L^2_\omega}\|_{L^{5}_{t,x}}\\
&&\quad \quad \lesssim \|Y\|_{L^\infty_{t,x},L^2_\omega} \|S(t)Z_0\|_{L^{5}_{t,x},L^2_\omega} \lesssim \|S(t)Z_0\|_{L^2_\omega L^{5}_{t,x}} \lesssim \|Z_0\|_{L^2_\omega H^{1/2}_{x}}
\eee
where we used Lemma \ref{lem:strichartzrandom} for the last inequality.

\end{proof}

We now establish additional long time decay. We also study perturbations $W_U(Y)$ generated by the effects of a potential field on the background equilibrium. The equilibrium $Y$ being a random superposition of linear waves $e^{i\xi.x}$, it will induce additional transport on top of the free Schr\"odinger evolution. As an application of Lemma \ref{lem-Fouriertrick} we obtain the following.

\begin{lemma} \label{lem:expressionWVY} We have for all $V \in \mathcal S(\R\times \R^3)$ 
$$
-i\int_{\R} S(t-\tau) [V(\tau) Y(\tau)] d\tau = -i \int_{\mathbb R^3} dW(\xi) f(\xi) e^{-it(m+ |\xi|^2)} e^{i\xi \cdot x} \int_{\R} S_\xi (t-\tau) \Big( V(\tau)\Big) d\tau.
$$  
\end{lemma}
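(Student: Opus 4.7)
The plan is to compute $-i\int_{\R} S(t-\tau)[V(\tau)Y(\tau)]d\tau$ by inserting the defining Wiener integral representation \fref{id:Y} of $Y(\tau)$, swapping the Wiener integral with the Schr\"odinger propagator and with the time integral, and then invoking Lemma \ref{lem-Fouriertrick} to conjugate the propagator through the plane wave.

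More precisely, I would first write
$$V(\tau)Y(\tau)=\int_{\mathbb R^3}dW(\xi)\,f(\xi)\,e^{-i\tau(m+|\xi|^2)+i\xi\cdot x}\,V(\tau),$$
which is rigorous because $V(\tau)$ is deterministic and the integrand in $\xi$ sits inside a Gaussian Wiener integral. Then I would apply the propagator $S(t-\tau)$ (in the $x$ variable, for each fixed $\tau$) and commute it with the Wiener integral in $\xi$; this stochastic Fubini step is the analogue of the identity \fref{id:fubinischrodingerwiener} recalled in the appendix, and is legitimate for $V\in\mathcal S(\R\times\R^3)$ since everything is smooth, decaying, and the Itô isometry controls the $L^2_\omega$ norm uniformly. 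The outcome is
$$S(t-\tau)[V(\tau)Y(\tau)]=\int_{\mathbb R^3}dW(\xi)\,f(\xi)\,S(t-\tau)\bigl[e^{-i\tau(m+|\xi|^2)+i\xi\cdot x}V(\tau)\bigr].$$

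Next, for fixed $\xi$ and $\tau$, I would apply Lemma \ref{lem-Fouriertrick} with the test element $U=V(\tau)\in\mathcal S(\R^3)$ to replace the bracket by $e^{-it(m+|\xi|^2)+i\xi\cdot x}\,S_\xi(t-\tau)V(\tau)$. Substituting this back gives, for each $\tau$,
$$S(t-\tau)[V(\tau)Y(\tau)]=\int_{\mathbb R^3}dW(\xi)\,f(\xi)\,e^{-it(m+|\xi|^2)+i\xi\cdot x}\,S_\xi(t-\tau)V(\tau).$$

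The remaining step is to integrate in $\tau$ and exchange the $d\tau$ integral with the Wiener integral. Since $V\in\mathcal S(\R\times\R^3)$, the function $(\tau,\xi)\mapsto f(\xi)S_\xi(t-\tau)V(\tau)$ is jointly smooth and rapidly decaying in $\tau$ (uniformly in $x$), so Fubini's theorem for Wiener integrals applies, yielding precisely the claimed identity. The only real subtlety is justifying this latter swap, so I would state explicitly the Itô-isometry bound that controls $\int_{\R}\|f(\xi)S_\xi(t-\tau)V(\tau)\|_{L^2_\xi}\,d\tau<\infty$ uniformly in $x$; once this is in place the proof is a direct two-line assembly. The only obstacle here is bookkeeping of the commutation properties of $S(t-\tau)$, Wiener integrals, and time integration, which is exactly what Lemma \ref{lem-Fouriertrick} and the Wiener Fubini lemma of Appendix \ref{sec:tech} are designed to handle.
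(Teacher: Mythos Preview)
Your proposal is correct and follows essentially the same approach as the paper: insert the Wiener integral representation of $Y$, commute $S(t-\tau)$ with the Wiener integral via \fref{id:fubinischrodingerwiener}, apply Lemma \ref{lem-Fouriertrick}, and exchange the $d\tau$ integral with the Wiener integral via \fref{id:fubinilebesguewiener}. The only cosmetic difference is the order of the last two steps (the paper swaps $d\tau$ and $dW(\xi)$ first and then applies Lemma \ref{lem-Fouriertrick}), which is immaterial.
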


\begin{proof} We set
\bea
\nonumber W_V^{\R}(Y) &=& -i \int_{\R} S(t-\tau) \Big[ V(\tau) Y(\tau) \Big] d\tau \\
\label{def:mathcalWV}&&\quad \quad = -i \int_{\R}S(t-\tau)\Big[ V(\tau) \int_{\mathbb R^3} dW(\xi) f(\xi) e^{-i\tau(m+|\xi|^2) + ix\cdot \xi} \Big] d\tau
\eea
where we replaced $Y$ by its definition in the last identity. Since $W(\xi)$ and $f(\xi)$ do not depend on the space variable, we can factorize them out of $S(t-\tau)$ using \fref{id:fubinischrodingerwiener} and get
$$
W_V^{\R}(Y) = -i \int_{\R} \int_{\mathbb R^3} dW(\xi) f(\xi) S(t-\tau)\Big[ V(\tau)  e^{-i\tau(m+|\xi|^2) + ix\cdot \xi} \Big] d\tau.
$$
We can then exploit the fact that $V \in \mathcal S(\R\times \R^3)$ and \fref{id:fubinilebesguewiener} to exchange the integrals, and then use Lemma \ref{lem-Fouriertrick} to get the desired identity
\bee
W_V^{\R}(Y) &=& -i \int_{\mathbb R^3} dW(\xi) f(\xi) \int_{\R}  S(t-\tau)\Big[ V(\tau)  e^{-i\tau(m+|\xi|^2) + ix\cdot \xi} \Big] d\tau\\
&&\quad =-i \int_{\mathbb R^3} dW(\xi) f(\xi)  e^{-it(m+|\xi|^2) + ix\cdot \xi} \int_{\R}  S_\xi (t-\tau)\Big[ (V(\tau))  \Big] d\tau.
\eee
\end{proof}

The next proposition exploits the spreading induced by the transport described in the previous Lemma. It studies simultaneously via a duality argument the potential created by the free evolution of random fields, either general or of the form $Z_g$, and the random field created by the application of a potential to the background equilibrium $Y$.

\begin{proposition} \label{pr:freeandpotential}

For any $s\in \mathbb R$, there holds for $C=C\left(\int_0^\infty |h|(r)dr \right)$:
\be \label{bd:improvedrandomtopotential2}
\| \mathbb E\left(\bar Y (t) S(t)Z_g\right)\|_{L^2_t\dot H^{s+\frac 12}_x}\leq C \| Z_g \|_{\dot{\tilde H}^{s}_{x,\omega}}.
\ee
Moreover, for any $2\leq p,q\leq \infty $ and $0\leq s <3/2$ such that $\frac2{p}+\frac3{q} = \frac 32-s$ there holds:
\begin{equation} \label{bd:potentialtorandom}
\| \int_{\R} S(t-\tau)\left(Y(\tau)U(\tau)\right)d\tau \|_{L^p_t L^q_xL^2_\omega\cap C_t( \dot{\tilde H}^s_{x,\omega})}\leq C \| U \|_{L^2_t\dot H^{s-\frac 12}_x}.
\end{equation}
In particular for any $Z_0\in L^{\frac 32}_xL^2_\omega$ and $C=C\left(\int_0^\infty (|h|+|h''|+r^{-1}|h'|)dr \right)$:
\be \label{bd:improvedrandomtopotential}
\| \mathbb E\left([\nabla \bar Y (t)] S(t)Z_0\right)\|_{L^2_tL^2_x}+\| \mathbb E\left(\bar Y (t) S(t)Z_0\right)\|_{L^2_tL^2_x}\leq C \| Z_0 \|_{L^{\frac 32}_xL^2_\omega}.
\ee

\end{proposition}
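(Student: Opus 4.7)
The plan is to handle all three bounds through the formula of Lemma \ref{lem:expressionWVY} (and its analogue for $S(t)Z_g$ obtained from Lemma \ref{lem-Fouriertrick}), which, after taking the Wiener expectation, eliminates the stochastic integral and reduces each quantity on the left-hand side to a deterministic 3D integral involving the boosted free flow $S_\xi$.

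For \eqref{bd:potentialtorandom}, I would write $F(t,x) := \int_{\mathbb R} S(t-\tau)(Y U)\, d\tau = i\int_{\mathbb R^3} dW(\xi)\, f(\xi)\, e^{-it(m+|\xi|^2)+i\xi\cdot x}\, h_\xi(t,x)$ with $h_\xi(t) = S_\xi(t) w_\xi$ and $w_\xi := \int_{\mathbb R} S_\xi(-\tau) U(\tau)\, d\tau$. The Wiener isometry and Minkowski's inequality (valid since $p,q\geq 2$) reduce the two target norms to
\[
\int_{\mathbb R^3} f^2(\xi)\,\|h_\xi\|_{L^p_tL^q_x}^2\, d\xi \quad \text{and} \quad \int_{\mathbb R^3} f^2(\xi)\,\|w_\xi\|_{\dot H^s_x}^2\, d\xi,
\]
the second expression using that $S_\xi$ is an $\dot H^s_x$-isometry, so the $\dot H^s_x$ norm of $h_\xi(t)$ is $t$-independent. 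Since $S_\xi$ is a Galilean boost of $S$, the homogeneous Strichartz inequalities for $S_\xi$ coincide with those for $S$, so $\|h_\xi\|_{L^p_tL^q_x}\leq C\|w_\xi\|_{\dot H^s_x}$ for admissible $(p,q,s)$ with $0\leq s<3/2$. Parseval gives $\|w_\xi\|_{\dot H^s_x}^2 = \int |\eta|^{2s}|\tilde U(|\eta|^2+2\eta\cdot\xi,\eta)|^2\, d\eta$, where $\tilde U$ is the space-time Fourier transform. Interchanging integrations and performing, for each $\eta\neq 0$, the change of variable $\xi = \frac{\mu - |\eta|^2}{2|\eta|}\hat\eta + \xi_\perp$ with Jacobian $(2|\eta|)^{-1}$, the $\xi$-integration contributes the X-ray projection
\[
\Phi(a) := \int_{\hat\eta^\perp} f^2(a\hat\eta+\xi_\perp)\, d\xi_\perp.
\]
By the projection-slice theorem and the radial symmetry of $f$, $\Phi$ is the inverse 1D Fourier transform of $\rho\mapsto h(|\rho|)$, whence $\|\Phi\|_{L^\infty}\leq C\int_0^\infty |h|(r)\, dr$. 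Plancherel in $\mu$ then delivers \eqref{bd:potentialtorandom}, and continuity of $F(t)=S(t)F_0$ with values in $\dot{\tilde H}^s_{x,\omega}$ follows from writing $F_0$ in the form \eqref{def:Zg} and invoking the standard continuity of the Schr\"odinger group on the underlying Hilbert space.

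For \eqref{bd:improvedrandomtopotential2} I would either redo the computation directly on $\mathbb E(\bar Y(t,x) S(t)Z_g(x)) = \int f(\xi)\, S_\xi(t)[g(\cdot,\xi)](x)\, d\xi$ (the same change of variable applies after Plancherel in $t$ on $\hat F(t,\eta)$), or derive it from \eqref{bd:potentialtorandom} by duality, using the identification $(\dot{\tilde H}^s_{x,\omega})^*=\dot{\tilde H}^{-s}_{x,\omega}$ under the $L^2_\omega L^2_x$ pairing and the rewriting
\[
\int V\,\mathbb E(\bar Y S(t) Z_g)\, dx\, dt = \mathbb E\Bigl\langle \int_{\mathbb R} S(-t)[\bar V(t) Y(t)]\, dt,\, Z_g\Bigr\rangle_{L^2_x}
\]
to reduce it to \eqref{bd:potentialtorandom} evaluated at $t=0$ with $s$ replaced by $-s$. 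Finally, \eqref{bd:improvedrandomtopotential} follows by the same duality, now between $L^{3/2}_x L^2_\omega$ and $L^3_x L^2_\omega$: testing against $V\in L^2_{t,x}$ gives $\|\int S(-t)[\bar V Y]\, dt\|_{L^3_x L^2_\omega}\|Z_0\|_{L^{3/2}_x L^2_\omega}$, and \eqref{bd:potentialtorandom} applied with $(p,q,s)=(\infty,3,\tfrac12)$ (admissible since $2/\infty+3/3=1=\tfrac32-\tfrac12$) bounds the first factor by $C\|V\|_{L^2_{t,x}}$. The term involving $\nabla Y$ is handled identically with $Y$ replaced by $\nabla Y$, whose distribution function is $i\xi f(\xi)$ and whose $|\cdot|^2$-Fourier kernel is $-\Delta h = -h'' - \tfrac{2}{r} h'$ in radial three dimensions, so its $L^1([0,\infty))$ norm is precisely the hypothesis $\int_0^\infty (|h''|+r^{-1}|h'|)\, dr<\infty$.

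The central technical step is the change of variable $\xi \mapsto (\mu,\xi_\perp)$, together with its behaviour near $\eta=0$, combined with the projection-slice identification that converts the integrability assumption on $h$ into a uniform $L^\infty$ bound on $\Phi$; the Strichartz portion additionally relies on the Galilean invariance $\|S_\xi(t) u\|_{L^q_x}=\|S(t) u\|_{L^q_x}$, which transfers the entire admissible range $0\leq s<3/2$ from $S$ to $S_\xi$.
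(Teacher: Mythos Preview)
Your proof is correct and takes essentially the same route as the paper's. The paper packages the core computation as a $TT^*$ argument---recognizing the composed operator, after passing to spatial Fourier, as a time-convolution with kernel $|\eta|h(2\eta\tau)e^{-i\tau|\eta|^2}$ and bounding it by Young's inequality---which is the convolution side of your projection-slice identity; for \eqref{bd:improvedrandomtopotential} the paper goes through the embedding $L^{3/2}_xL^2_\omega\hookrightarrow\dot{\tilde H}^{-1/2}_{x,\omega}$ (dual to the Sobolev embedding $\dot{\tilde H}^{1/2}_{x,\omega}\hookrightarrow L^3_xL^2_\omega$) and then applies the first bound, which is exactly your $L^{3/2}$--$L^3$ duality unwound one step.
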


\begin{remark} \label{re:spreading}

In comparison with usual estimates for the Schr\"odinger evolution, the first bound of the above Lemma (from which the others follow) gives an additional $L^2$ integrability in time at a cost of half a derivative in low frequencies. The explanation is as follows: the correlation between the free evolution $S(t)Z_g$ and the background equilibrium $Y$ is:
$$
\mathbb E(\bar YS(t)Z_g)=\int_{\mathbb R^3} f(\xi)S_{\xi}(t)(g(\xi,\cdot))d\xi=\mathcal F^{-1}\left(\int_{\mathbb R^3} f(\xi) e^{-it(|\eta|^2-2\eta.\xi)}\hat g(\xi,\eta)d\xi\right).
$$
Formally, in addition to the free Schr\"odinger evolution, the information along $dW(\xi)$ has been displaced at speed $\xi$. The consequence of this spreading effect is the appearance of a non-stationary phase in $\xi$ variable in the last identity, which gives an additional $1/\sqrt t$ decay at a cost of $1/\sqrt{|\eta|}$.

\end{remark}

\begin{proof}

\textbf{Step 1}: \emph{Proof of the first bound}. We employ a duality argument to establish first two continuity bounds simultaneously and consider the operators:
$$
T:Z_0 \mapsto \mathbb E\left(\bar Y (t) S(t)Z_0\right), \quad \quad T^*:U\mapsto \int_0^\infty S(-\tau)\left(Y(\tau)U(\tau)\right)d\tau .
$$
We claim that for any $s\in \mathbb R$ there holds:
\begin{equation} \label{bd:continuityTT*}
\| \mathbb E\left(\bar Y (t) S(t)Z_g\right)\|_{L^2_t\dot H^{s+\frac 12}_x}\leq C \| Z_g \|_{\dot{\tilde H}^{s}_{x,\omega}}  \quad \mbox{and}  \quad  \| \int_0^\infty S(-\tau)\left(Y(\tau)U(\tau)\right)d\tau \|_{\dot{\tilde H}^{s+\frac 12}_{x,\omega}}\leq C \| U \|_{L^2_t\dot H^s_x}.
\end{equation}
The above continuity bounds for $T$ from $\dot{\tilde H}_{x,\omega}^s$ into $L^2_t\dot H^{s+1/2}$ and for $T^*$ from $L^2_t\dot H^{s}$ into $\dot{\tilde H}_{x,\omega}^{s+1/2}$ for any $s\in \mathbb R$, are equivalent to that of $TT^*$ from $L^2_t\dot H^{s}$ into $L^2_t\dot H^{s+1}$. Using Lemma \ref{lem:expressionWVY} there holds:
\begin{equation} \label{id:continuityTT*}
T(Z_g)=\int_{\mathbb R^3} f(\xi)S_{\xi}(t)(g(\xi,\cdot))d\xi, \quad \quad T^*(U)= \int_0^\infty \int_{\mathbb R^3} dW(\xi) f(\xi)e^{i\xi.x}S_\xi(-\tau)(U(\tau))d\tau .
\end{equation}
Hence, using \fref{id:correlationgaussian} and getting rid of the weights in Fourier, the continuity of $TT^*$ from $L^2_t\dot H^{s}$ into $L^2_t\dot H^{s+1}$ is equivalent to the continuity from $L^2_tL^2_x$ into itself of the operator:
$$
\mathcal T:U\mapsto  \int_0^\infty \int_{\mathbb R^3}f^2(\xi) |\nabla| S_{\xi}(t-\tau)U(\tau)d\tau 
$$
In Fourier this is:
\begin{eqnarray*}
\mathcal F(\mathcal T U)(t,\eta) & = &  \int_0^\infty \int_{\mathbb R^3}f^2(\xi)e^{-i(t-\tau)(|\eta|^2+2\xi.\eta)}|\eta| \hat U(\tau,\eta)d\tau d\xi \\
&=& \int_0^\infty \int_{\mathbb R^3} \eta h(2\eta (t-\tau)) e^{-i(t-\tau) |\eta|^2} \hat U(\tau,\eta)d\tau .
\end{eqnarray*}
At each fixed $\eta$, we recognise a convolution in time, so from Parseval's and Young's inequality:
\be \label{bd:lineaireconstante}
\| \mathcal T U  \|_{L^2_{t,x}}\leq \| |\eta| h(2\eta \cdot)\|_{L^{\infty}_\eta L^1_\tau}\| \hat U \|_{L^2_\xi L^2_\tau}\lesssim \sup_{\eta \in \mathbb S^3}\int |h|(\eta r)dr \| \hat U \|_{L^2_{x,t}}
\ee
where we changed variables $\tau \mapsto \tilde \tau /(2|\eta|)$ in the integral on $h$. So $\mathcal T $ is indeed continuous from $L^2_tL^2_x$ into itself, which establishes \fref{bd:continuityTT*} and the first bound the Lemma.\\

\textbf{Step 2}: \emph{Proof of the second bound}. One has that $\nabla Y=\int \xi f(\xi)e^{i\xi.x-it(|\xi|^2+m)}dW(\xi)$. Since $h$ is the Fourier transform of $f^2$, then $-\pa_{x_jx_j}h$ is that of $\xi_j^2f^2$. Since $h$ is radial, writing $r=|x|$ we get $\pa_{x_jx_j}h=r^{-1}h'(r)+x_j^2r^{-2}(h''(r)-r^{-1}h'(r))$. Hence we bound:
$$
\sup_{j=1,2,3} \sup_{x\in \mathbb S^3} \int_0^\infty |\pa_{x_jx_j}h(xr)|dr \lesssim \int_0^\infty \left(|h''|(r)+\frac{|h'|(r)}{r}\right)dr<\infty.
$$
Therefore, the very same analysis of Step 1, but this time replacing $f$ by $\xi_j f$ for $j=1,2,3$, using the explicit formula for the continuity constant \fref{bd:lineaireconstante} and the above bound, yields:
\be \label{bd:continuityTT*2}
\| \mathbb E\left([\nabla \bar Y (t)] S(t)Z_g\right)\|_{L^2_t\dot H^{s+\frac 12}_x}\lesssim \left(\int_0^\infty \left(|h''|(r)+\frac{|h'|(r)}{r}\right)dr\right)^{\frac 12}  \| Z_g \|_{\dot{\tilde H}^{s}_{x,\omega}}.
\ee
and that, for any $2\leq p,q\leq \infty $ and $0\leq s <3/2$ such that $\frac2{p}+\frac3{q} = \frac 32-s$ there holds:
\be \label{bd:continuityTT*3}
 \| \int_\R S(t-\tau)\left([\nabla Y(\tau)]U(\tau)\right)d\tau \|_{L^p_t L^q_xL^2_\omega\cap C_t( \dot{\tilde H}^s_{x,\omega})}\lesssim \left(\int_0^\infty \left(|h''|(r)+\frac{|h'|(r)}{r}\right)dr\right)^{\frac 12}  \| U \|_{L^2_t\dot H^{s-\frac 12}_x}.
\ee
Next, we have for any $0\leq s < 3/2$ that $\dot{\tilde H}^s_{x,\omega}$ is continuously embedded in $L^{2^*}_xL^2_\omega$ where $2^*$ is the Lebesgue exponent of the standard Sobolev embedding $\dot H^s_x\mapsto L^{2^*}_x$. Indeed, by \fref{def:Zg}, \fref{id:correlationgaussian}, Minkowski inequality and Sobolev embedding:
$$
\| Z_g \|_{L^{2^*}_xL^2_\omega} =\left(\int_x \left(\int_\xi |g(x,\xi)|^2d\xi \right)^{\frac{2^*}{2}}\right)^{\frac{1}{2^*}}\lesssim \|\;\| g(\xi,x)\|_{L^{2^*}_x}\|_{L^2_\xi}\lesssim \|\; \| g (\xi,x)\|_{\dot H^s_x}\|_{L^2_\xi}=\| Z_g\|_{\tilde{\dot{H}}^s_{x,\omega}}.
$$
We then take $s=-1/2$. The dual version of the above embedding is:
\be
L^{3/2}_xL^2_\omega \quad \mbox{embeds continuously into} \quad \dot{\tilde H}^{-1/2}_{x,\omega}.
\ee
Applying \fref{bd:continuityTT*} and \fref{bd:continuityTT*2} then gives the second bound \fref{bd:improvedrandomtopotential} of the Proposition.\\

\textbf{Step 3}: \emph{Proof of the third bound}. It follows as a direct consequence of the second bound in \fref{bd:continuityTT*} and of the dispersive estimates of Lemma \ref{lem:strichartzZg} for $Z_g$ fields. We have
$$
\| \int S(t-\tau)\left(Y(\tau)U(\tau)\right)d\tau \|_{L^p_t L^q_xL^2_\omega}\lesssim \| \int S(-\tau)\left(Y(\tau)U(\tau)\right)d\tau \|_{\dot{\tilde H}^{s}}\lesssim C \| U \|_{L^2_t\dot H^{s-\frac 12}_x}.
$$

\end{proof}

A direct application of the above Proposition, useful for our setting for nonlinear stability, is the following.

\begin{proposition}\label{lem:estonWUY} For all $U\in L^2_{t}H^{1/2}_x$, we have if $\int_0^\infty \left(||h|+\frac{|h'|(r)}{r}+|h''|(r)\right)dr<\infty$:
\be \label{bd:WVY}
\|W_U(Y)\|_{L^{\infty}_tL^3_xL^2_\omega} +\| \langle \nabla \rangle^{\frac 12} W_U(Y)\|_{L^5_{t,x}L^2_\omega}\lesssim \|U\|_{L^2_{t}H^{1/2}_x}.
\ee
Under the same hypothesis, if $U\in L^2_{t}L^2_x$:
\be \label{bd:WVnablaY}
\|W_U(Y)\|_{L^5_{t,x}L^2_\omega}+\|W_U(\nabla Y)\|_{L^5_{t,x}L^2_\omega}\lesssim \|U\|_{L^2_{t,x}}.
\ee
\end{proposition}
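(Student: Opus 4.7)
The plan is to reduce both estimates to the dispersive bounds \eqref{bd:potentialtorandom} and \eqref{bd:continuityTT*3} from Proposition~\ref{pr:freeandpotential}, which control the full-line integrals $\int_\R S(t-\tau)[YU]\,d\tau$ and $\int_\R S(t-\tau)[\nabla Y\cdot U]\,d\tau$ respectively, and then to pass from these to the retarded integral $\int_0^t$ defining $W_U(Y)$ via the Christ-Kiselev lemma. This last step is applicable in every instance below since the time-output exponents ($5$ or $\infty$) strictly exceed the time-input exponent $p=2$.

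Estimate \eqref{bd:WVnablaY} is then immediate: \eqref{bd:potentialtorandom} with $(s,p,q)=(1/2,5,5)$ (admissible as $2/5+3/5=1=3/2-s$) bounds the $W_U(Y)$ summand, while \eqref{bd:continuityTT*3} with the same triple bounds the $W_U(\nabla Y)$ summand; the constant in the latter is finite precisely under the assumed integrability $\int_0^\infty(|h''|+|h'|/r)\,dr<\infty$. Likewise the $\|W_U(Y)\|_{L^\infty_tL^3_xL^2_\omega}$ part of \eqref{bd:WVY} follows from \eqref{bd:potentialtorandom} with $(s,p,q)=(1/2,\infty,3)$, admissible since $0+1=1$, giving a bound by $\|U\|_{L^2_{t,x}}\leq \|U\|_{L^2_tH^{1/2}_x}$.

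The more delicate $\|\langle\nabla\rangle^{1/2}W_U(Y)\|_{L^5_{t,x}L^2_\omega}$ part of \eqref{bd:WVY} I would handle by commuting the Fourier multiplier $\nabla_x$ past $S(t-\tau)$ and applying the product rule to get the identity
\[
\nabla W_U(Y) \;=\; W_{\nabla U}(Y) \,+\, W_U(\nabla Y).
\]
Using \eqref{bd:WVnablaY}, this yields $\|\nabla W_U(Y)\|_{L^5_{t,x}L^2_\omega}\lesssim \|U\|_{L^2_tH^1_x}$, and together with the $k=0$ bound $\|W_U(Y)\|_{L^5_{t,x}L^2_\omega}\lesssim \|U\|_{L^2_{t,x}}$ already in hand, this identifies the linear operator $T:U\mapsto W_U(Y)$ as bounded from $L^2_tH^k_x$ into $L^5_tW^{k,5}_xL^2_\omega$ for $k\in\{0,1\}$. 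Complex interpolation in the spatial Bessel index, with values in the Hilbert space $L^2_\omega$, then produces the intermediate bound at $k=1/2$, which is exactly what is claimed.

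The step requiring the most care is controlling $W_U(\nabla Y)$ purely in terms of $\|U\|_{L^2_{t,x}}$: this is where the hypotheses involving $h''$ and $h'/r$ are essential, because replacing $f$ by $|\xi|f$ in the argument underlying \eqref{bd:potentialtorandom} turns $h=\mathcal F(|f|^2)$ into the Fourier transform of $|\xi|^2|f|^2$, whose radial derivatives govern the constant of the associated convolution-in-time kernel. Once this is settled, the remainder of the argument is a routine combination of Strichartz-type estimates, the product rule, and interpolation.
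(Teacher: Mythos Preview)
Your proposal is correct and follows essentially the same route as the paper: reduce to the full-line estimates \eqref{bd:potentialtorandom} and \eqref{bd:continuityTT*3} via Christ--Kiselev, use the product rule identity $\nabla W_U(Y)=W_{\nabla U}(Y)+W_U(\nabla Y)$ to obtain the $k=0$ and $k=1$ endpoint bounds, and then complex-interpolate to reach $k=1/2$. The only cosmetic difference is that you first establish \eqref{bd:WVnablaY} in full and then feed it into the proof of \eqref{bd:WVY}, whereas the paper interleaves the two, but the underlying estimates and constants are identical.
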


\begin{proof}[Proof of Proposition \ref{lem:estonWUY}]

Because of the density of $S(\R\times \R^3)$ in $L^2_{t,x}$, and by Christ-Kiselev lemma, it is sufficient to prove that for all $U \in S(\R\times \R^3)$,
$$
\| W_{U}^{\R}(Y)\|_{L^{\infty}_tL^3_xL^2_\omega}+\| \langle \nabla \rangle^{\frac 12} W_{U}^{\R}(Y)\|_{L^5 L^5 L^2_\omega} \lesssim \|U\|_{L^2_{t,x}}.
$$
 We get from \fref{bd:potentialtorandom} with $s=1/2$ the first bound of the Proposition:
\be \label{bd:WA0inter}
\|W^{\R}_U(Y)\|_{L^5_tL^5_xL^2_\omega\cap L^{\infty}_tL^3_xL^2_\omega}\lesssim \| U\|_{L^2_tL^2_x}
\ee
This shows the first bound in \fref{bd:WVnablaY}. Next, we differentiate using Lemma \ref{lem:expressionWVY} for any $j=1,2,3$:
\be \label{id:differentiationWUY}
 \partial_{x_j} W^{\R}_U(Y)= -i\partial_{x_j}\left( \int S(t-\tau)(U(\tau)Y(\tau))d\tau\right)=W^{\R}_{U}(\partial_{x_j}Y)+W^{\R}_{\partial_{x_j}U}(Y).
\ee
We bound the first term in \fref{id:differentiationWUY} via \fref{bd:continuityTT*3}, and obtain: $\| W^{\R}_U(\partial_{x_j}Y)\|_{L^5_{t,x}L^2_\omega} \lesssim \| U \|_{L^2_{t,x}}$. This shows the second bound in \fref{bd:WVnablaY}, hence \fref{bd:WVnablaY} is proved. We bound the second term in \fref{id:differentiationWUY} via \fref{bd:WA0inter}, and get:
$$
\| \nabla W^{\R}_U(Y)\|_{L^5_tL^5_xL^2_\omega}\lesssim \| U\|_{L^2_{t,x}}+ \| \nabla U\|_{L^2_{t,x}}.
$$
Hence \fref{bd:WA0inter} and the above bound, together with \fref{id:vectorvaluedbesselpotential}, show that $\| \langle \nabla \rangle W^{\R}_U[f,w]\|_{L^5_{t,x}L^2_\omega}\lesssim \| \langle \nabla \rangle U\|_{L^2_{t,x}}$. Applying complex interpolation between \fref{bd:WA0inter} and this  bound gives the second estimate in \fref{bd:WVY}.

\end{proof}

\subsection{Linear response of the equilibrium to a potential}

We now study the operator $L_2$ defined in \fref{def:L2}, which ends the linear analysis and allows us to prove Proposition \ref{pr:lineaire} below. The operator $L_2$ is the linear response of the equilibrium $Y$ to a potential $V$ \cite{GV,L,Mih}. Its property was previously studied in \cite{lewsab2}, and extended to the current setup in \cite{CodS}. In particular, it is proved in \cite{lewsab2} Proposition 1, \cite{CodS} Lemma 5.6, that $L_2$ is the following Fourier multiplier (for space-time Fourier transformation):
$$
\mathcal F_{t,x}\left(L_2(V)\right)(\omega,\xi) = \hat w (\xi)m_f(\omega,\xi) \mathcal F_{t,x} V(\omega,\xi),
$$
where, only here, $\omega$ denotes the dual time variable and not an element of the probability space $\Omega$, and:
\be \label{def:mf}
m_f(\omega,\xi)= -2\mathcal F_t \left(\sin(|\xi|^2t) h(2\xi t)1_{t\geq 0} \right)(\omega)=-2 \int_0^{+\infty} e^{-i\omega t} \sin (|\xi|^2t)h(2\xi t) dt .
\ee

\begin{proposition}[Lewin-Sabin Proposition 1 and Corollary 1 \cite{lewsab2}] \lab{pr:bdL2}

Assume that $f$ satisfies the following conditions:

\begin{itemize} 
\item $f>0$ is a bounded radial $C^1$ fonction, with, writing $r= |\xi|$ the radial variable, $\partial_r f <0$,
\item $\int_0^\infty r^2f^2(r)dr<\infty$ and $\int_0^\infty r|f(r)\partial_r  f(r)|dr < \infty$,
\item $\int_0^\infty (1+r)|h|(r)dr<\infty$ and $\int_0^\infty \left(\frac{|h'|(r)}{r}+|h''|(r)\right)dr<\infty$,
\end{itemize}
and that $w$ satisfies \fref{id:hpw} and
\be \label{def:eph}
\| (\hat w)_-\|_{L^{\infty}}\left(\int_{0}^{\infty} r|h(r)|dr\right)<2 \quad \mbox{and} \quad \hat w (0)_+\epsilon_h <1, \quad\mbox{where} \quad \epsilon_h= \limsup_{(\tau,\xi)\rightarrow (0,0)} \left( \Re m_f(\tau,\xi)\right)
\ee
Then $m_f(\omega,\xi)$ is a bounded function, continuous outside $(0,0)$, and $L_2$ is continuous on $L^2_tL^2_x$. Moreover, $\text{Id}-L_2$ has a continuous inverse on $L^2_{t,x}$. In addition, one has that $(\text{Id}-L_2)^{-1}$ is continuous from $L^2_{t,x}\cap L^{5/2}_{t,x}$ onto $L^{5/2}_{t,x}$.

\end{proposition}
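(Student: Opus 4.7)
\medskip
\noindent\emph{Proof strategy.} The plan has three main steps: derive the Fourier multiplier form of $L_2$ and bound its symbol $m_f$; prove the Nyquist-type lower bound $|1-\hat w m_f|\geq c>0$ which yields $L^2$-invertibility; and upgrade this to the $L^{5/2}$ setting.

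First, starting from $L_2(V) = 2\Re\,\E(\bar Y W_{w*V}(Y))$ and the explicit formula from Lemma~\ref{lem:expressionWVY}, the Wiener isometry $\E(dW(\xi)\overline{dW(\eta)}) = \delta(\xi-\eta)d\xi d\eta$ gives
\begin{equation*}
\E\bigl(\bar Y(t,x)\, W_{w*V}(Y)(t,x)\bigr) = -i \int_{\R^3} |f(\xi)|^2 \int_0^t \bigl[S_\xi(t-\tau)(w*V)(\tau)\bigr](x)\,d\tau\,d\xi.
\end{equation*}
Taking a Fourier transform in $x$, the inner $\xi$-integral assembles into $h(2\eta(t-\tau))$ since $h=\mathcal{F}(|f|^2)$; one further Fourier transform in $t$ turns the Duhamel convolution into multiplication by the symbol $m_f(\omega,\eta)$ of \eqref{def:mf}, the extra $\hat w(\eta)$ factor coming from $w*V$. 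A uniform bound on $m_f$ comes from $|\sin s|\leq |s|$ and the change of variable $r=2|\xi|t$: $|m_f(\omega,\xi)|\leq 2\int_0^\infty |\xi|^2 t\,|h(2|\xi|t)|\,dt=\tfrac12\int_0^\infty r|h(r)|dr$, which is finite by hypothesis. Continuity of $m_f$ off $(0,0)$ follows by dominated convergence using $h\in L^1$ and $rh\in L^1$.

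Second, the heart of the argument is showing $\inf_{(\omega,\xi)}|1-\hat w(\xi)m_f(\omega,\xi)|>0$, essentially Proposition~1 of \cite{lewsab2}. The idea is a Nyquist-type analysis. The monotonicity $\partial_r f<0$ forces $\textrm{Im}\, m_f(\omega,\xi)$ to have a strict sign away from the axis $\omega=0$, as can be seen by integration by parts against $\partial_r f$ in the $\xi$-integral defining $m_f$. Since $\hat w$ is real, any zero of $1-\hat w m_f$ must then occur on $\{\omega=0\}$, where the equation reduces to $\hat w(\xi)\,\Re\, m_f(0,\xi)=1$. I would dichotomize on the sign of $\hat w$: where $\hat w(\xi)\leq 0$, the first-step bound combined with the hypothesis $\|(\hat w)_-\|_\infty\int_0^\infty r|h|dr<2$ yields $|\hat w\,\Re\,m_f|<1$; where $\hat w(\xi)>0$, near $\xi=0$ one uses $\Re\,m_f(0,\xi)\to\epsilon_h$ together with $\hat w(0)_+\epsilon_h<1$ and the continuity of $\hat w m_f$, while outside a neighbourhood of the origin Riemann--Lebesgue decay of $m_f$ and compactness take care of the remaining regions. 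Assembling these gives the uniform lower bound, hence $1/(1-\hat w m_f)\in L^\infty$ and $\mathrm{Id}-L_2$ is invertible on $L^2_{t,x}$.

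Third, for the $L^{5/2}$ part I would use the identity $(\mathrm{Id}-L_2)^{-1} = \mathrm{Id}+L_2(\mathrm{Id}-L_2)^{-1}$. Given $V\in L^2\cap L^{5/2}$, the previous step gives $U:=(\mathrm{Id}-L_2)^{-1}V\in L^2_{t,x}$, and it suffices to bound $\|L_2 U\|_{L^{5/2}_{t,x}}$ in terms of the $L^2\cap L^{5/2}$ norm of $V$. Starting from $L_2 U = 2\Re\,\E(\bar Y W_{w*U}(Y))$, Cauchy--Schwarz in $\omega$ together with the constancy $\|Y(t,x)\|_{L^2_\omega}=\|f\|_{L^2}$ reduces this to controlling $\|W_{w*U}(Y)\|_{L^{5/2}_{t,x}L^2_\omega}$, which is achieved via the dispersive estimates of Proposition~\ref{pr:freeandpotential}, the continuity of $w*$ on the relevant Lebesgue spaces, and a density argument. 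The main obstacle is the Nyquist-type step: the sharp conditions in \eqref{def:eph} emerge only from a delicate combination of the sign of $\textrm{Im}\,m_f$ (from $\partial_r f<0$), the precise asymptotic behaviour of $\Re\,m_f$ near $(0,0)$ (encoded in $\epsilon_h$), and the dichotomy on the sign of $\hat w$ so as to match both inequalities of \eqref{def:eph} simultaneously.
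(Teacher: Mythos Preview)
Your overall strategy matches the paper's. The first two steps (Fourier multiplier form, boundedness of $m_f$, and the Nyquist-type argument for $|1-\hat w m_f|\geq c>0$) are precisely the content of \cite{lewsab2} Proposition~1 and Corollary~1, which the paper simply cites. For the third step you correctly identify the key decomposition $(\mathrm{Id}-L_2)^{-1}=\mathrm{Id}+L_2(\mathrm{Id}-L_2)^{-1}$ and the need to show $L_2:L^2_{t,x}\to L^{5/2}_{t,x}$, which is exactly what the paper does.

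There is one small imprecision in your third step. You propose to reduce to $\|W_{w*U}(Y)\|_{L^{5/2}_{t,x}L^2_\omega}$ and then invoke Proposition~\ref{pr:freeandpotential}. But the dispersive bound \eqref{bd:potentialtorandom} only applies for exponents $(p,q)$ satisfying $\tfrac{2}{p}+\tfrac{3}{q}=\tfrac32-s$ with $0\leq s<\tfrac32$, which forces $p,q\geq \tfrac{10}{3}$; the exponent $5/2$ is not in range. The paper instead bounds $\|W_V(Y)\|_{L^5_{t,x}L^2_\omega}\lesssim\|V\|_{L^2_{t,x}}$ via \eqref{bd:WVnablaY}, deduces $L_2:L^2_{t,x}\to L^5_{t,x}$, and then interpolates this with the known $L^2_{t,x}\to L^2_{t,x}$ continuity of $L_2$ to obtain $L_2:L^2_{t,x}\to L^{5/2}_{t,x}$. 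With this adjustment your argument is complete and coincides with the paper's.
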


\begin{remark} \label{rem:cancellationlineaire}

Proposition \ref{pr:freeandpotential} shows a loss of half a derivative at low frequencies, whereas the above Proposition \ref{pr:bdL2} shows no loss. The reason is a cancellation explained in details in the paragraph \emph{Linear cancellation at low frequencies} of the Strategy of the proof Subsection \ref{setup}.

\end{remark}

\begin{proof}[Proof of Proposition \ref{pr:bdL2}]

All results are contained in \cite{lewsab2} Proposition 1 and Corollary 1, except the fact that $(\text{Id}-L_2)^{-1}$ is continuous from $L^2_{t,x}\cap L^{5/2}_{t,x}$ onto $L^{5/2}_{t,x}$. Thus, we solely prove this fact, avoiding advanced harmonic analysis tools as in \cite{lewsab2}, but relying on dispersive estimates. We will use that all other results are known to be true, in particular that $L_2$ is continuous on $L^2_{t,x}$, and that $\text{Id}-L_2$ is invertible on $L^2_{t,x}$ or equivalently that $c\leq |1-\hat w(\xi)m_f(\tau,\xi)|\leq c^{-1}$ for some $c>0$.

First, by using Cauchy-Schwarz , H\"older and \fref{bd:WVnablaY}, we bound:
$$
\| \mathbb E (\bar Y W_V (Y)) \|_{L^5_{t,x}}\leq \|  W_V (Y)) \|_{L^5_{t,x}L^2_\omega} \| Y \|_{L^{\infty}_{t,x}L^2_\omega}\lesssim \|  W_V (Y)) \|_{L^5_{t,x}L^2_\omega} \lesssim \| V \|_{L^2_{t,x}}
$$
so that $L_2$ is continuous from $L^2_{t,x}$ onto $L^{5}_{t,x}$. As it is also continuous on $L^2_{t,x}$, by interpolation we get that $L_2$ is continuous from $L^2_{t,x}$ onto $L^{5/2}_{t,x}$. We then decompose:
$$
(\text{Id}-L_2)^{-1}=\text{Id}+L_2(\text{Id}-L_2)^{-1}.
$$
Above, the first term $\text{Id}$ is continuous on $L^{5/2}_{t,x}$. The second $L_2(\text{Id}-L_2)^{-1}$ is continuous from $L^2_{t,x}$ onto $L^{5/2}_{t,x}$, because $(\text{Id}-L_2)^{-1}$ is continuous on $L^2_{t,x}$, and $L_2$ is continuous from $L^2_{t,x}$ onto $L^{5/2}_{t,x}$. Hence $(\text{Id}-L_2)^{-1}$ is continuous from $L^2_{t,x}\cap L^{5/2}_{t,x}$ onto $L^{5/2}_{t,x}$.

\end{proof}

We can now prove Proposition \ref{pr:lineaire}.

\begin{proof}[Proof of Proposition \ref{pr:lineaire}]

From the fixed point equation \fref{def:L2} we take 
$$
V=(1-L_2)^{-1}(2Re\mathbb E(\bar Y S(t)Z_0)).
$$
From \fref{bd:strichartzrandomtopotential} we get $\|\mathbb E(\bar Y S(t)Z_0)\|_{L^{5}_{t,x}}\lesssim \| Z_0 \|_{H^{1/2}_x}\lesssim \| Z_0 \|_{\Theta_0}$. From \fref{bd:improvedrandomtopotential2} and \fref{bd:strichartzrandomtopotential} one obtains:
$$
\|\mathbb E(\bar Y S(t)Z_0)\|_{L^2_tH^{1/2}_x}\lesssim \| Z_0 \|_{L^2_{x,\omega}}+\| Z_0 \|_{L^{3/2}_xL^2_\omega}\lesssim \| Z_0 \|_{\Theta_0}.
$$
The last two estimates and H\"older show $\|\mathbb E(\bar Y S(t)Z_0)\|_{L^{5/2}_{t,x}\cap L^2_tH^{1/2}_x}\lesssim \| Z_0 \|_{\Theta_0}$. Proposition  \ref{pr:bdL2} ensures that $(1-L_2)^{-1}$ is continuous on $L^{2}_{t,x}$, hence it is continuous on $L^2_{t}H^{1/2}_x$ since this is a space-time Fourier multiplier. Moreover, it is also continuous from $L^{2}_{t,x}\cap L^{5/2}_{t,x}$ into $L^{5/2}_{t,x}$. Therefore:
$$
\| V\|_{\Theta_V}=\| (1-L_2)^{-1}(\mathbb E(\bar Y S(t)Z_0))\|_{L^{5/2}_{t,x}\cap L^2_{t}H^{1/2}_x}\lesssim \|\mathbb E(\bar Y S(t)Z_0))\|_{L^{5/2}_{t,x}\cap L^2_{t}H^{1/2}_x} \lesssim \| Z_0 \|_{\Theta_0}.
$$
Then, we set $Z=S(t)Z_0+W_V(Y)$. For the first part there holds $\| S(t)Z_0\|_{\Theta_Z}\lesssim \| Z_0 \|_{\Theta_0}$ from Lemma \ref{lem:strichartzrandom}, while the last bound in Proposition \ref{pr:lineaire} is a direct consequence of Proposition \ref{lem:estonWUY}.

\end{proof}

\section{Study of a specific quadratic term}\label{sec:spefquad}

This Section is devoted to the study of the quadratic term $Q_2$. We recall a formal explanation is included in the strategy of the proof Subsection \ref{setup} and we claim:

\begin{proposition} \label{pr:continuiteQ2} There exists $C(h,w)$ such that for all $U,V \in \Theta_V$, we have 
$$
\|Q_2(V,U)\|_{\Theta_V} \leq C(h,w) \|V\|_{\Theta_V} \|U\|_{\Theta_V}.
$$
\end{proposition}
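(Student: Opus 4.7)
The plan is to reduce $Q_2(U,V)$ to a purely deterministic bilinear expression in $U,V$ via the Wiener isometry, to identify the triangular cancellation highlighted in Subsection \ref{setup}, and then split the estimate into a crude $L^{5/2}_{t,x}$ bound that ignores the cancellation and a subtler $L^2_t H^{1/2}_x$ bound that crucially uses it.

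First I would apply Lemma \ref{lem:expressionWVY} once to rewrite $W_V(Y)$ and twice (together with the commutator identity \fref{id:comutateurphaseschrod}) to rewrite $W_U(W_V(Y))$. Setting $A_V^\xi(t,x) := \int_0^t [S_\xi(t-\tau)V(\tau)](x)\,d\tau$ and $B_{U,V}^\xi(t,x) := \int_0^t S_\xi(t-s)[U(s)A_V^\xi(s)](x)\,ds$, the Wiener orthogonality \fref{id:correlationgaussian} collapses the $dW\,\overline{dW}$ products in all the expectations into a single integral against $f^2(\xi)\,d\xi$, yielding the representation
\[
Q_2(U,V)(t,x)=2\,\mathrm{Re}\int_{\R^3}f^2(\xi)\bigl[\,\overline{A_V^\xi(t,x)}\,A_U^\xi(t,x)-B_{U,V}^\xi(t,x)-B_{V,U}^\xi(t,x)\,\bigr]\,d\xi.
\]
If $V=V(t)$ and $U=U(t)$ have no spatial dependence, then $S_\xi$ acts as the identity, $A_V^\xi=\int_0^t V$, and Fubini on $\{0\leq \tau\leq s\leq t\}$ gives $B_{U,V}^\xi+B_{V,U}^\xi=(\int_0^t U)(\int_0^t V)$, which equals $\overline{A_V^\xi}A_U^\xi$ since $U,V$ are real-valued; hence the bracket vanishes identically in $\xi$. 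This is the cancellation at zero spatial frequency of $V,U$ that the proof must exploit.

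For the $L^{5/2}_{t,x}$ component of the $\Theta_V$-norm no cancellation is required: Cauchy--Schwarz in $\omega$ and H\"older in $(t,x)$ give
\[
\|\mathbb{E}(\overline{W_V(Y)}\,W_U(Y))\|_{L^{5/2}_{t,x}}\leq \|W_V(Y)\|_{L^5_{t,x}L^2_\omega}\,\|W_U(Y)\|_{L^5_{t,x}L^2_\omega},
\]
and, using that $\|Y\|_{L^\infty_{t,x}L^2_\omega}$ is finite, each mixed term $\mathbb{E}(\bar Y W_U(W_V(Y)))$ is bounded by $\|W_U(W_V(Y))\|_{L^{5/2}_{t,x}L^2_\omega}$. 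Proposition \ref{lem:estonWUY} then turns each factor $W_\bullet(Y)$ into $\|\bullet\|_{\Theta_V}$; for the iterated term I would run the spreading argument of Proposition \ref{pr:freeandpotential} a second time with $W_V(Y)$ in place of $Y$, using that the bound $\|W_V(Y)\|_{L^5_{t,x}L^2_\omega}\lesssim \|V\|_{\Theta_V}$ controls the relevant norms.

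The $L^2_t H^{1/2}_x$ component is where the cancellation is indispensable. Passing to the spatial Fourier variable, $\widehat{S_\xi(\tau)V}(\eta)=e^{-i\tau(|\eta|^2+2\xi\cdot\eta)}\hat V(\eta)$, the bracket above becomes a bilinear kernel $K_\xi(t;\eta_1,\eta_2)$ acting on $(\hat V,\hat U)$ which vanishes identically at $\eta_1+\eta_2=0$ by the cancellation just observed; a Taylor expansion of the three exponential phases around $\eta_1+\eta_2=0$ then factors out a gradient $|\eta_1+\eta_2|$, which absorbs the $|\eta_1+\eta_2|^{1/2}$ weight needed to reach $\dot H^{1/2}$. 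The remaining $L^2_{t,x}$ estimate on the gradient-smoothed kernel is handled by the convolution-in-time argument of Step 1 of Proposition \ref{pr:freeandpotential}, now with derivatives of $h$ appearing in the kernel (controlled by the hypotheses $\int r|h|\,dr<\infty$ and $\int(|h''|+|h'|/r)\,dr<\infty$), together with Young's inequality in time. The main obstacle, and the most delicate point of the proof, is implementing this Taylor expansion cleanly at the kernel level so that the factored gradient falls on \emph{one} of $U,V$ and the remaining smoothed kernel admits a uniform-in-$\xi$ convolution bound; this is the quadratic counterpart of the Lewin--Sabin cancellation that made $\mathrm{Id}-L_2$ invertible on $L^2_{t,x}$ in Proposition \ref{pr:bdL2}.
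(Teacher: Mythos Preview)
Your high-level plan --- deterministic representation via the Wiener isometry, identification of the zero-frequency cancellation, then a split into a crude high-exponent bound and a cancellation-based $L^2_tH^{1/2}_x$ bound --- matches the paper's architecture. However, both halves of the execution diverge from the paper, and each has a gap.

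For the $L^{5/2}_{t,x}$ bound, your treatment of the iterated term $\E(\bar Y\,W_U(W_V(Y)))$ does not go through as stated. You propose to ``run the spreading argument of Proposition~\ref{pr:freeandpotential} a second time with $W_V(Y)$ in place of $Y$'', but that proposition depends on the explicit Wiener-integral structure of $Y$ (the phases $e^{i\xi\cdot x}$ and the profile $f$); $W_V(Y)$ does not have this form, so there is no second spreading gain to collect. The paper instead estimates this term at the $L^{10/3}_{t,x}$ level via ordinary inhomogeneous Strichartz: $\|W_V\circ W_U(Y)\|_{L^{10/3}_{t,x}L^2_\omega}\lesssim\|V\,W_U(Y)\|_{L^{10/7}_{t,x}L^2_\omega}\lesssim\|V\|_{L^2_{t,x}}\|W_U(Y)\|_{L^5_{t,x}L^2_\omega}$ (Lemma~\ref{prop:QuadV2.1}), and only then reaches $L^{5/2}_{t,x}$ by interpolating with the $L^2_{t,x}$ bound of Proposition~\ref{prop:QuadV2.2}.

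For the $L^2_tH^{1/2}_x$ bound, your Taylor-expansion route faces a concrete obstacle that you flag but do not resolve: factoring $|\eta_1+\eta_2|$ out of the phase $\sin\bigl((t-\tau_1)(|\eta|^2-\eta\cdot\eta_2)\bigr)$ produces a factor $(t-\tau_1)$, i.e.\ time growth that the $h$-kernel does not obviously absorb. The paper avoids this entirely. It computes $\widehat{Q_2}(\eta)$ explicitly (Lemmas~\ref{lem:exprQ21}--\ref{lem:exprQ22}) and uses the exact identity $\cos(\theta-\phi)-\cos(\theta+\phi)=2\sin\theta\sin\phi$ to obtain a product-of-sines kernel $K_{\eta,\eta_2}(t,s)$; the cancellation is encoded in this structure rather than extracted by expansion. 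A geometric change of variables (Lemma~\ref{lem-normK}) then gives $\|K_{\eta,\eta_2}\|_{L^2_tL^1_s}\lesssim(|\eta|^2|\eta_2|^2-(\eta\cdot\eta_2)^2)^{-1/4}$, and the resulting singular $\eta$-integral is closed by a dedicated trilinear estimate (Lemma~5 of \cite{lewsab2}); this yields the $L^2_{t,x}$ bound. The extra half-derivative is obtained by a separate mechanism: one projects to high frequencies $P_{\geq 1}$ and bounds $\|\nabla Q_2\|_{L^2_{t,x}}$ by a duality argument against the linear estimates \fref{bd:potentialtorandom} and \fref{bd:WVnablaY} (Lemma~\ref{prop:QuadV2.1}). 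No kernel-level Taylor expansion is performed.
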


Recall that $\Theta_V= L^{5/2}_{t,x}\cap L^2_{t}H^{1/2}_x$. We first establish a bilinear estimate for $Q_2$ for the low space-time Lebesgue exponent $2$, inspired by \cite{lewsab2}. This requires explicit computations, analogue to those performed for a similar term in \cite{lewsab2}, that are performed in the next Lemmas. We prove such formulas for real valued potentials $U$ and $V$ belonging to the Schwartz class to avoid technical issues. The general estimate in $(L^2_{t}H^{1/2}_x)^2$ will then follow from a density argument. For estimating in $L^2_t\dot H^{1/2}$ or $L^{5/2}_{t,x}$ we use the linear bounds of the previous Section. By H\"older's inequality, $\E(\overline{W_U(Y)} W_V(Y))$ belongs to $L^{5/2}_{t,x}$, since $W_V(Y)$ belongs to $L^5_{t,x} L^2_\omega$. But it is far from obvious for $\E(\bar Y W_U(W_V(Y)))$. Indeed, what we do is that we prove that $Q_2$ belongs to $L^2_{t,x}\cap L^{10/3}_{t,x}$ by a duality argument. Since $\frac52 \in [2,\frac{10}{3}]$, we will get the desired result. The next Lemma gives an exact expression of the first term in $Q_2$.

\begin{lemma} \label{lem:exprQ21} For $U,V \in \mathcal \mathcal S(\R\times \R^3)$, set $J_1 = 2\textrm{Re }\E\Big( \overline{W_U(Y)} W_V(Y)\Big)$. Then we have for all $\eta \in \R^3$:
\begin{multline*}
\hat J_1(\eta) = 2\int_{0}^td\tau_1 \int_{0}^{\tau_1}d\tau_2 \int_{\mathbb R^3} d\tilde{\eta} h((t-\tau_1)2\eta + (\tau_1-\tau_2) 2\tilde{\eta})\cos\Big((t-\tau_1)(|\eta|^2-2\tilde{\eta}\cdot \eta)-(\tau_1-\tau_2)|\tilde{\eta}|^2 \Big)\\
\Big[\hat U(\eta-\tilde{\eta},\tau_1) \hat V(\tilde{\eta},\tau_2)+ \hat V(\eta-\tilde{\eta},\tau_1) \hat U(\tilde{\eta},\tau_2)\Big].
\end{multline*}
\end{lemma}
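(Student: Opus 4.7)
The plan is a direct computation starting from the stochastic integral formula for $W_V(Y)$ provided by Lemma~\ref{lem:expressionWVY}, applying Wiener isometry, taking the spatial Fourier transform, and symmetrizing the time integrations.

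First, I would substitute into $\overline{W_U(Y)}W_V(Y)$ the representation
\[W_V(Y)(t,x)=-i\int_{\mathbb R^3}dW(\xi)\,f(\xi)\,e^{-it(m+|\xi|^2)+i\xi\cdot x}\int_0^t S_\xi(t-\tau)V(\tau)\,d\tau\]
and analogously for $U$. The two prefactors $(\mp i)$ combine to $1$, and the diagonal phases $e^{\pm it(m+|\xi|^2)}$ cancel. Taking the expectation and using $\E(\overline{dW(\xi_1)}dW(\xi_2))=\delta(\xi_1-\xi_2)d\xi_1 d\xi_2$, the double Wiener integral collapses to
\[\E(\overline{W_U(Y)}W_V(Y))(t,x)=\int_{\mathbb R^3}f^2(\xi)\,d\xi\int_0^t\!\!\int_0^t d\tau_1 d\tau_2\,\overline{S_\xi(t-\tau_1)U(\tau_1)}(x)\,S_\xi(t-\tau_2)V(\tau_2)(x).\]

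Second, I would take the Fourier transform in $x$. Using $\mathcal F_x[S_\xi(t-\tau)W](\eta)=e^{-i(t-\tau)(|\eta|^2+2\eta\cdot\xi)}\hat W(\eta)$, the reality of $U\in\mathcal S$, and that the Fourier transform of a product is the (suitably normalized) convolution of Fourier transforms, the resulting $\tilde\eta$-integrand equals
\[e^{i(t-\tau_1)(|\eta-\tilde\eta|^2-2(\eta-\tilde\eta)\cdot\xi)}\,e^{-i(t-\tau_2)(|\tilde\eta|^2+2\tilde\eta\cdot\xi)}\,\hat U(\eta-\tilde\eta,\tau_1)\,\hat V(\tilde\eta,\tau_2).\]
The $\xi$-dependent part of the phase collects into $-2i\xi\cdot[(t-\tau_1)\eta+(\tau_1-\tau_2)\tilde\eta]$, so the $\xi$-integral against $f^2$ reproduces exactly $h((t-\tau_1)2\eta+(\tau_1-\tau_2)2\tilde\eta)$. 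The remaining $\xi$-free phase simplifies, via $|\eta-\tilde\eta|^2=|\eta|^2-2\tilde\eta\cdot\eta+|\tilde\eta|^2$, to $\phi:=(t-\tau_1)(|\eta|^2-2\tilde\eta\cdot\eta)-(\tau_1-\tau_2)|\tilde\eta|^2$, which matches the argument of $\cos$ in the statement.

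Third, I would symmetrize the time domain. The square $[0,t]^2$ splits into $\{\tau_2\leq\tau_1\}$ and $\{\tau_1<\tau_2\}$; on the latter, the substitution $(\tau_1,\tau_2)\leftrightarrow(\tau_2,\tau_1)$ together with $\tilde\eta\mapsto\eta-\tilde\eta$ leaves the $h$-factor invariant, flips the sign of $\phi$, and swaps the position of $\hat U,\hat V$. Combined with the identical calculation applied to $\E(\overline{W_V(Y)}W_U(Y))$ (the other half of $2\Re$), the four pieces pair up via $e^{i\phi}+e^{-i\phi}=2\cos\phi$ into the two cross-terms $\hat U(\eta-\tilde\eta,\tau_1)\hat V(\tilde\eta,\tau_2)+\hat V(\eta-\tilde\eta,\tau_1)\hat U(\tilde\eta,\tau_2)$ of the claimed formula, the overall factor $2$ arising from $2\Re$. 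The main difficulty is purely bookkeeping: one must track the two independent changes of variable $\tilde\eta\mapsto\eta-\tilde\eta$ and $\tau_1\leftrightarrow\tau_2$, and verify that after symmetrization the argument of $h$ and the phase $\phi$ reassemble into exactly the symmetric expression in the statement. No analytic estimate enters at this stage; all the work is in the algebraic manipulation of oscillating factors.
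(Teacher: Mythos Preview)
Your proposal is correct and follows essentially the same approach as the paper: express $W_V(Y)$ via Lemma~\ref{lem:expressionWVY}, apply Wiener isometry, take the spatial Fourier transform and integrate in $\xi$ to produce $h$, then symmetrize using the change of variables $(\tau_1,\tau_2,\tilde\eta)\mapsto(\tau_2,\tau_1,\eta-\tilde\eta)$. The only organizational difference is that the paper first combines the two halves of $2\Re$ over the full square $[0,t]^2$ to obtain the cosine, then symmetrizes in $U\leftrightarrow V$, and finally checks the $(\tau_1,\tau_2)$-symmetry of the integrand to reduce to the triangle; you instead split the square into triangles first and then pair the four resulting pieces --- the key algebraic identities (invariance of the $h$-argument and sign flip of $\phi$ under the joint substitution) are the same.
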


\begin{proof}We have by Lemma \ref{lem:expressionWVY} and \fref{id:correlationgaussian}:
$$
\E\Big( \overline{W_U(Y)} W_V(Y)\Big) = \int_{\mathbb R^3} d\xi |f(\xi)|^2 \int_0^t d\tau_1 \int_0^{t} d\tau_2 \overline{S_\xi(t-\tau_1) U(\tau_1)} S_\xi (t-\tau_2) V(\tau_2).
$$
Let $J_{U,V}$ be the Fourier transform of $\overline{S_\xi(t-\tau_1) U(\tau_1)} S_\xi (t-\tau_2) V(\tau_2)$. We have since $U$ is real-valued,
$$
J_{U,V}(\eta)= \int_{\mathbb R^3} d\tilde{\eta} e^{i(t-\tau_1)((\eta-\tilde{\eta})^2-2\xi \cdot (\eta-\tilde{\eta}) }\hat U(\eta-\tilde{\eta},\tau_1)e^{-i(t-\tau_2)(|\tilde{\eta}|^2 +2\xi \cdot \tilde{\eta}) } \hat V(\tilde{\eta},\tau_2).
$$
We have the identity
\begin{multline*}
(t-\tau_1)(|\eta-\tilde{\eta}|^2-2\xi \cdot (\eta-\tilde{\eta}))  -(t-\tau_2)(|\tilde{\eta}|^2 +2\xi \cdot \tilde{\eta}) = \\
(t-\tau_1)(|\eta|^2-2\tilde{\eta}\cdot \eta) -(\tau_1-\tau_2)|\tilde{\eta}|^2-\xi\cdot \left( (t-\tau_1)2\eta+(\tau_1-\tau_2)2\tilde \eta\right).
\end{multline*}
Hence, integrating over $\xi$, we get, with $h$ the Fourier transform of $|f|^2$, 
$$
\int_{\mathbb R^3} |f(\xi)|^2 J(\eta) d\xi =  \int_{\mathbb R^3} d\tilde{\eta} h((t-\tau_1)2\eta + (\tau_1-\tau_2) 2\tilde{\eta}) e^{i(t-\tau_1)(|\eta|^2-2\tilde{\eta}\cdot \eta)-(\tau_1-\tau_2)|\tilde{\eta}|^2 }\hat U(\eta-\tilde{\eta},\tau_1) \hat V(\tilde{\eta},\tau_2).
$$
Therefore we obtain the identity:
\begin{align}
\nonumber &\mathcal F \left(\E\Big( \overline{W_U(Y)} W_V(Y)\Big)(\eta)\right)\\
\label{id:J1part1}=& \int_{[0,t]^2} d\tau_1d\tau_2\int_{\mathbb R^3} d\tilde{\eta} h((t-\tau_1)2\eta + (\tau_1-\tau_2) 2\tilde{\eta}) e^{i(t-\tau_1)(|\eta|^2-2\tilde{\eta}\cdot \eta)-(\tau_1-\tau_2)|\tilde{\eta}|^2 }\hat U(\eta-\tilde{\eta},\tau_1) \hat V(\tilde{\eta},\tau_2).
\end{align}
By symmetry the complex conjugate of the above is:
\begin{align*}
&\mathcal F \left(\E\Big( \overline{W_V(Y)} W_U(Y)\Big)(\eta)\right)\\
=& \int_{[0,t]^2} d\tau_1d\tau_2\int_{\mathbb R^3} d\tilde{\eta} h((t-\tau_1)2\eta + (\tau_1-\tau_2) 2\tilde{\eta}) e^{i(t-\tau_1)(|\eta|^2-2\tilde{\eta}\cdot \eta)-(\tau_1-\tau_2)|\tilde{\eta}|^2 }\hat V(\eta-\tilde{\eta},\tau_1) \hat U(\tilde{\eta},\tau_2)\\
\end{align*}
We change variables in the above identity, with $(\tilde \eta,\tau_1,\tau_2) \rightarrow (\eta-\tilde \eta',\tau_2',\tau_1')$. Noticing that
\be \label{id:changeofvarinter1}
(t-\tau_1)2\eta + (\tau_1-\tau_2) 2\tilde{\eta}=(t-\tau_2')2\eta + (\tau_2'-\tau_1') 2(\eta-\tilde{\eta}')=(t-\tau_1')2\eta + (\tau_1'-\tau_2') 2\tilde{\eta}'
\ee
and that
\bea
\nonumber (t-\tau_1)(|\eta|^2-2\tilde{\eta}\cdot \eta)-(\tau_1-\tau_2)|\tilde{\eta}|^2&=&(t-\tau_2')(|\eta|^2-2(\eta-\tilde{\eta}')\cdot \eta)-(\tau_2'-\tau_1')|\eta-\tilde{\eta}'|^2\\
\label{id:changeofvarinter2}&=&-(t-\tau_1')(|\eta|^2-2\tilde{\eta}'\cdot \eta)+(\tau_1'-\tau_2')|\tilde{\eta}'|^2,
\eea
we obtain (replacing $(\tilde \eta',\tau_1',\tau_2')$ by $(\tilde \eta,\tau_1,\tau_2)$):
\begin{align*}
&\mathcal F \left(\E\Big( \overline{W_V(Y)} W_U(Y)\Big)(\eta)\right)\\
=& \int_{[0,t]^2} d\tau_1d\tau_2\int_{\mathbb R^3} d\tilde{\eta} h((t-\tau_1)2\eta + (\tau_1-\tau_2) 2\tilde{\eta}) e^{-i(t-\tau_1)(|\eta|^2-2\tilde{\eta}\cdot \eta)+(\tau_1-\tau_2)|\tilde{\eta}|^2 }\hat U(\eta-\tilde{\eta},\tau_1) \hat V(\tilde{\eta},\tau_2).
\end{align*}
Using the identity $e^{i\theta}+e^{-i\theta}=2\cos (\theta)$, summing \fref{id:J1part1} and the above identity one obtains:
\bee
\hat J_1(\eta)&=&2 \int_{[0,t]^2} d\tau_1d\tau_2\int_{\mathbb R^3} d\tilde{\eta}h((t-\tau_1)2\eta + (\tau_1-\tau_2) 2\tilde{\eta}) \\
&&\quad \quad \quad \quad \quad \quad \quad \cos((t-\tau_1)(|\eta|^2-2\tilde{\eta}\cdot \eta)-(\tau_1-\tau_2)|\tilde{\eta}|^2 )\hat U(\eta-\tilde{\eta},\tau_1) \hat V(\tilde{\eta},\tau_2).
\eee
Using now that $J_1$ is symmetric in $U$ and $V$, we get
\begin{multline*}
\hat J_1(\eta) = \int_{[0,t]^2}d\tau_1 d\tau_2 \int_{\mathbb R^3} d\tilde{\eta} h((t-\tau_1)2\eta + (\tau_1-\tau_2) 2\tilde{\eta}) \cos\Big((t-\tau_1)(|\eta|^2-2\tilde{\eta}\cdot \eta)-(\tau_1-\tau_2)|\tilde{\eta}|^2 \Big)\\
\Big[\hat U(\eta-\tilde{\eta},\tau_1) \hat V(\tilde{\eta},\tau_2)+ \hat V(\eta-\tilde{\eta},\tau_1) \hat U(\tilde{\eta},\tau_2)\Big].
\end{multline*}
We now check that the integrand on $[0,t]^2$ above is symmetric in $\tau_1$ and $\tau_2$. Let
\begin{multline*}
g(\tau_1,\tau_2) := \int_{\mathbb R^3} d\tilde{\eta} h((t-\tau_1)2\eta + (\tau_1-\tau_2) 2\tilde{\eta}) \cos\Big((t-\tau_1)(|\eta|^2-2\tilde{\eta}\cdot \eta)-(\tau_1-\tau_2)|\tilde{\eta}|^2 \Big)\\
 \Big[\hat U(\eta-\tilde{\eta},\tau_1) \hat V(\tilde{\eta},\tau_2)+ \hat V(\eta-\tilde{\eta},\tau_1) \hat U(\tilde{\eta},\tau_2)\Big]
\end{multline*}
Changing variables $\tilde \eta \rightarrow \eta-\tilde \eta'$, we get
\begin{multline*}
g(\tau_1,\tau_2) = \int_{\mathbb R^3} d\tilde{\eta}' h((t-\tau_1)2\eta + (\tau_1-\tau_2) 2(\eta-\tilde{\eta}')) \cos\Big((t-\tau_1)(|\eta|^2-2(\eta-\tilde{\eta}')\cdot \eta)-(\tau_1-\tau_2)|\eta-\tilde{\eta}'|^2 \Big)\\
 \Big[\hat U(\tilde{\eta}',\tau_1) \hat V(\eta-\tilde{\eta}',\tau_2)+ \hat V(\tilde{\eta}',\tau_1) \hat U(\eta-\tilde{\eta}',\tau_2)\Big].
\end{multline*}
Then using \fref{id:changeofvarinter1} and \fref{id:changeofvarinter2} and the fact that $\cos (\theta)=\cos(-\theta)$:
\begin{multline*}
g(\tau_1,\tau_2) = \int_{\mathbb R^3} d\tilde{\eta}' h((t-\tau_2)2\eta + (\tau_2-\tau_1) 2\tilde{\eta}) \cos\Big(-(t-\tau_2)(|\eta|^2-2\tilde{\eta}'\cdot \eta)+(\tau_2-\tau_1)|\tilde{\eta}'|^2 \Big)\\
\Big[\hat U(\eta-\tilde{\eta}',\tau_2) \hat V(\tilde{\eta}',\tau_1)+ \hat V(\eta-\tilde{\eta}',\tau_2) \hat U(\tilde{\eta}',\tau_1)\Big]\\
=g(\tau_2,\tau_1).
\end{multline*}

Therefore, using this symmetry, we have that $\int_{[0,t]^2}d\tau_1d\tau_2 g(\tau_1,\tau_2)=2\int_0^td\tau_1\int_0^{\tau_1}d\tau_2 g(\tau_1,\tau_2)$ which give the desired identity for the Lemma:
\begin{multline*}
\hat J_1(\eta) = 2\int_{0}^td\tau_1 \int_{0}^{\tau_1}d\tau_2 \int_{\mathbb R^3} d\tilde{\eta} h((t-\tau_1)2\eta + (\tau_1-\tau_2) 2\tilde{\eta}) \cos\Big((t-\tau_1)(|\eta|^2-2\tilde{\eta}\cdot \eta)-(\tau_1-\tau_2)|\tilde{\eta}|^2 \Big)\\
\Big[\hat U(\eta-\tilde{\eta},\tau_1) \hat V(\tilde{\eta},\tau_2)+ \hat V(\eta-\tilde{\eta},\tau_1) \hat U(\eta_2,\tau_2)\Big].
\end{multline*}

\end{proof}

The next Lemma will help give an exact expression of the second term in $Q_2$.

\begin{lemma} \label{lem:expressionWVWU} We have for all $U,V \in \mathcal S(\R\times \R^3)$,
$$
\E\Big( \bar Y W_V(W_U(Y))\Big) = - \int_{\mathbb R^3} d\xi |f(\xi)|^2 \int_{0}^t d\tau_1 S_\xi (t-\tau_1)\Big[ V(\tau_1) \int_{0}^{\tau_1} d\tau_2 S_\xi (\tau_1 - \tau_2) U (\tau_2) \Big].
$$
\end{lemma}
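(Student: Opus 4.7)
The plan is to unfold the definition of $W_V(W_U(Y))$ as a double Duhamel integral, apply Lemma \ref{lem:expressionWVY} (or rather its proof, adapted to the interval $[0,\tau_1]$) to rewrite the inner term, use Lemma \ref{lem-Fouriertrick} to conjugate the outer Schr\"odinger flow through the plane-wave phases, and then integrate against $\bar Y$ using the Wiener isometry. Everything will be justified classically because $U,V\in \mathcal S(\mathbb R\times \mathbb R^3)$, so we can freely swap Lebesgue and Wiener integrals through the Fubini-type identities \fref{id:fubinilebesguewiener} and \fref{id:fubinischrodingerwiener}.

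First, the very same computation as in Lemma \ref{lem:expressionWVY}, with the integration domain $\R$ replaced by $[0,\tau_1]$, yields
$$
W_U(Y)(\tau_1)=-i\int_{\mathbb R^3} dW(\xi)\, f(\xi)\, e^{-i\tau_1(m+|\xi|^2)+i\xi\cdot x}\int_0^{\tau_1}S_\xi(\tau_1-\tau_2) U(\tau_2)\,d\tau_2.
$$
I then insert this in the definition $W_V(W_U(Y))=-i\int_0^tS(t-\tau_1)[V(\tau_1)W_U(Y)(\tau_1)]\,d\tau_1$, exchange the Wiener integral with $S(t-\tau_1)$ and with the outer Lebesgue integral via \fref{id:fubinilebesguewiener}--\fref{id:fubinischrodingerwiener} (pulling $dW(\xi)f(\xi)$ outside the $\tau_1$-integral), which gives
$$
W_V(W_U(Y))=-\int_{\mathbb R^3}dW(\xi)\,f(\xi)\int_0^t S(t-\tau_1)\!\left[V(\tau_1)\,e^{-i\tau_1(m+|\xi|^2)+i\xi\cdot x}\!\int_0^{\tau_1}\!S_\xi(\tau_1-\tau_2)U(\tau_2)\,d\tau_2\right]d\tau_1.
$$

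Next I apply Lemma \ref{lem-Fouriertrick} with $U$ there equal to $V(\tau_1)\int_0^{\tau_1}S_\xi(\tau_1-\tau_2)U(\tau_2)\,d\tau_2$, which produces the factor $e^{-it(m+|\xi|^2)+i\xi\cdot x}$ and transforms the remaining $S(t-\tau_1)$ into $S_\xi(t-\tau_1)$:
$$
W_V(W_U(Y))=-\int_{\mathbb R^3}dW(\xi)\,f(\xi)\,e^{-it(m+|\xi|^2)+i\xi\cdot x}\int_0^t S_\xi(t-\tau_1)\!\left[V(\tau_1)\int_0^{\tau_1}\!S_\xi(\tau_1-\tau_2)U(\tau_2)\,d\tau_2\right]d\tau_1.
$$
Finally, I multiply by $\bar Y(t,x)=\int_{\mathbb R^3}\overline{dW(\xi')}\,f(\xi')\,e^{it(m+|\xi'|^2)-i\xi'\cdot x}$ (recall $f$ real-valued) and take expectation. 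The Wiener isometry $\E(dW(\xi)\overline{dW(\xi')})=\delta(\xi-\xi')d\xi d\xi'$, i.e.\ \fref{id:correlationgaussian}, collapses the double Wiener integral to a single $d\xi$ integral with weight $|f(\xi)|^2$, and the phases $e^{\pm it(m+|\xi|^2)\pm i\xi\cdot x}$ cancel exactly, delivering the claimed identity. The only point that requires minor care is ensuring the applicability of Fubini for Wiener integrals at each step; this is standard since $U,V$ are Schwartz and $f\in L^2$, so all the integrands are square-integrable in $\xi$ uniformly on compact sets of $(t,x)$.
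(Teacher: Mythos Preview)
Your proof is correct and follows essentially the same route as the paper's: apply Lemma \ref{lem:expressionWVY} (on $[0,\tau_1]$) to express $W_U(Y)(\tau_1)$, insert into the Duhamel formula for $W_V$, use \fref{id:fubinischrodingerwiener} and Lemma \ref{lem-Fouriertrick} to pull the Wiener integral and phase through $S(t-\tau_1)$, and finally invoke \fref{id:correlationgaussian} against $\bar Y$. The only difference is that you are slightly more explicit about the Fubini justifications, which is harmless.
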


\begin{proof}Recall that from Lemma \ref{lem:expressionWVY}:
$$
W_U(Y)(\tau_1) = -i\int_{\mathbb R^3} f(\xi) dW(\xi) e^{-i\tau_1(m+\xi^2) + ix\cdot \xi} \int_{0}^{\tau_1}d\tau_2 S_\xi(\tau_1 - \tau_2) U(\tau_2).
$$
Therefore, we have 
$$
W_V(W_U (Y)) = -  \int_{0}^t d\tau_1 S(t-\tau_1) \Big[ V(\tau_1) \int_{\mathbb R^3} f(\xi) dW(\xi) e^{-it(m+\xi^2) + ix\cdot \xi} \int_{0}^{\tau_1}d\tau_2 S_\xi(\tau_1 - \tau_2) U(\tau_2)\Big].
$$
Using Lemma \ref{lem-Fouriertrick} and \fref{id:fubinischrodingerwiener}, we get
$$
W_V(W_U(Y)) = - \int_{\mathbb R^3} f(\xi) dW(\xi) e^{-it(m+\xi^2) + ix\cdot \xi} \int_{0}^t d\tau_1 S_\xi(t-\tau_1) \Big[V(\tau_1) \int_{0}^{\tau_1} d\tau_2  S_\xi(\tau_1 - \tau_2) U(\tau_2)\Big].
$$
Using that $Y = \int_{\mathbb R^3} f(\xi)  e^{-it(m+\xi^2) + ix\cdot \xi}  d W(\xi)$ and \fref{id:correlationgaussian}, we get the desired result. 

\end{proof}

The next Lemma now gives an exact expression of the second term in $Q_2$.

\begin{lemma} \label{lem:exprQ22} For all $U,V\in \mathcal S(\R\times \R^3)$, set $J_2$ be the Fourier transform of $2\textrm{Re }\E(\bar Y W_V(W_U(Y)))$, we have for all $\eta \in \R^3$,
\begin{multline*}
J_2(\eta) = -2 \int_{0}^t d\tau_1 \int_{0}^{\tau_1} d\tau_2 \int_{\mathbb R^3} d\tilde{\eta} h((t-\tau_1)2\eta + (\tau_1 - \tau_2) 2\tilde{\eta} ) \cos \Big( (t-\tau_1) |\eta|^2 + (\tau_1 - \tau_2) |\tilde{\eta}|^2 \Big) \\
\hat V(\eta - \tilde{\eta}, \tau_1) \hat U(\tilde{\eta},\tau_2).
\end{multline*}
\end{lemma}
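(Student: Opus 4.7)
The plan is to follow the same strategy used to prove Lemma~\ref{lem:exprQ21}, starting from the closed-form expression for $\E(\bar Y W_V(W_U(Y)))$ provided by Lemma~\ref{lem:expressionWVWU}. The main steps are: take the spatial Fourier transform of that identity, carry out the Gaussian integration in $\xi$ using $h=\mathcal F(|f|^2)$, and finally extract the real part to produce the cosine.

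First I would take the Fourier transform in $x$ of the integrand
\[
S_\xi(t-\tau_1)\Big[V(\tau_1)\int_0^{\tau_1} S_\xi(\tau_1-\tau_2) U(\tau_2)\,d\tau_2\Big].
\]
The inner operator $S_\xi(\tau_1-\tau_2)$ acts in Fourier by multiplication by $e^{-i(\tau_1-\tau_2)(|\tilde\eta|^2+2\xi\cdot\tilde\eta)}$, multiplication by $V(\tau_1)$ produces a convolution with $\hat V(\cdot,\tau_1)$, and the outer $S_\xi(t-\tau_1)$ multiplies the result by $e^{-i(t-\tau_1)(|\eta|^2+2\xi\cdot\eta)}$. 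This gives the Fourier-side integrand
\[
e^{-i(t-\tau_1)|\eta|^2-i(\tau_1-\tau_2)|\tilde\eta|^2}\;e^{-i\xi\cdot[2(t-\tau_1)\eta+2(\tau_1-\tau_2)\tilde\eta]}\hat V(\eta-\tilde\eta,\tau_1)\hat U(\tilde\eta,\tau_2),
\]
exactly as in the computation leading to \eqref{id:J1part1}. The only book-keeping to do is to keep track of the $\xi$-dependent phase.

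Second, I would integrate against $|f(\xi)|^2 d\xi$. All the $\xi$-dependence has been concentrated in the exponential factor $e^{-i\xi\cdot[2(t-\tau_1)\eta+2(\tau_1-\tau_2)\tilde\eta]}$, so by definition of $h$ this integration produces the factor
\[
h\bigl(2(t-\tau_1)\eta+2(\tau_1-\tau_2)\tilde\eta\bigr).
\]
Combined with the minus sign from Lemma~\ref{lem:expressionWVWU}, this yields an exact expression for the Fourier transform of $\E(\bar Y W_V(W_U(Y)))$, of the desired form but with a complex exponential in $(t-\tau_1)|\eta|^2+(\tau_1-\tau_2)|\tilde\eta|^2$ in place of the cosine.

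Finally, to pass from $\E(\bar Y W_V(W_U(Y)))$ to $2\mathrm{Re}\,\E(\bar Y W_V(W_U(Y)))$ in Fourier, I use $\mathcal F(g+\bar g)(\eta)=\hat g(\eta)+\overline{\hat g(-\eta)}$. In $\hat g(-\eta)$ I change variables $\tilde\eta\mapsto -\tilde\eta$; using that $h$ is even (since $|f|^2$ is radial) and that $U,V$ are real-valued (so $\overline{\hat U(\xi)}=\hat U(-\xi)$ and similarly for $\hat V$), the two contributions share the same $h$-factor and the same product $\hat V(\eta-\tilde\eta,\tau_1)\hat U(\tilde\eta,\tau_2)$, while the phases become complex conjugates of each other. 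Their sum is the claimed $2\cos$ factor. Unlike in Lemma~\ref{lem:exprQ21}, no further $(\tau_1,\tau_2)$ symmetrization is needed because the iterated nature of $W_V\circ W_U$ already delivers the integration domain $0\le\tau_2\le\tau_1\le t$. The only mildly technical point is the careful tracking of conjugation symmetries in Fourier when extracting the real part, but this is a bookkeeping check rather than a genuine obstacle.
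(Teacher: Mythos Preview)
Your proposal is correct and follows essentially the same approach as the paper's proof: start from Lemma~\ref{lem:expressionWVWU}, take the spatial Fourier transform to isolate the $\xi$-phase, integrate against $|f(\xi)|^2$ to produce $h$, and then use $J_2(\eta)=J(\eta)+\overline{J(-\eta)}$ together with the real-valuedness of $U,V,|f|^2$ (and the evenness of $h$, via the change $\tilde\eta\mapsto-\tilde\eta$) to turn the complex exponential into a cosine. The paper compresses the last step into a single sentence, but your more explicit bookkeeping matches it exactly.
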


\begin{proof} Let $J$ be the Fourier transform of  $\E(\bar Y W_V(W_U(Y)))$, we have by Lemma \ref{lem:expressionWVWU}:
\begin{multline*}
J(\eta) = - \int_{\mathbb R^3} d\xi |f(\xi)|^2 \int_{0}^t d\tau_1 e^{-i (t-\tau_1)(|\eta|^2 + 2\xi \cdot \eta)}\int_{\mathbb R^3} d\tilde{\eta} \hat V(\eta-\tilde{\eta},\tau_1) \\
\int_{0}^{\tau_1} d\tau_2 e^{-i (\tau_1 - \tau_2)(|\tilde{\eta}|^2 + 2\xi\cdot \eta)} \hat U (\tilde{\eta},\tau_2) .
\end{multline*}
Integrating in $\xi$, we get
\begin{multline*}
J(\eta) = -\int_{0}^t d\tau_1 \int_{0}^{\tau_1} d\tau_2 \int_{\mathbb R^3} d\tilde{\eta} h( (t-\tau_1)2\eta + (\tau_1 - \tau_2) 2\tilde{\eta} ) e^{-i( t-\tau_1) |\eta|^2 -i (\tau_1 - \tau_2)|\tilde{\eta}|^2}\\
 \hat V(\eta-\tilde{\eta},\tau_1) \hat U (\tilde{\eta},\tau_2) .
\end{multline*}
Using that $J_2(\eta) = J(\eta) + \bar J(-\eta)$, and that $U, V$ and $|f|^2$ are real-valued, we get
\begin{multline*}
J_2(\eta) = -2  \int_{0}^t d\tau_1 \int_{0}^{\tau_1} d\tau_2 \int d\tilde{\eta} h( (t-\tau_1)2\eta + (\tau_1 - \tau_2) 2\tilde{\eta} ) \cos\Big(( t-\tau_1) |\eta|^2 + (\tau_1 - \tau_2)|\tilde{\eta}|^2\Big)\\ \hat V(\eta-\tilde{\eta},\tau_1) \hat U (\tilde{\eta},\tau_2) .
\end{multline*}
\end{proof}

The explicit computation of $Q_2$ is then the following.

\begin{proposition} For all $U,V \in \mathcal S(\R\times \R^3)$ and all $\eta \in \R^3$, 
\bea
\nonumber \hat Q_2(U,V) (\eta)&=&  4 \int_{0}^t d\tau_1 \int_{0}^{\tau_1} d\tau_2 \int d\eta_2 h( (t-\tau_1)2\eta + (\tau_1 - \tau_2) 2\eta_2 ) \\
\nonumber && \quad \quad \quad \sin ((t-\tau_1)(\eta^2-\eta_2\cdot \eta)) \sin((t-\tau_1)\eta_2\cdot \eta + (\tau_1 - \tau_2)\eta_2^2) \\
&& \quad \quad \quad \quad \quad \quad \label{id:expressionQ2} \Big[ \hat V(\eta-\eta_2,\tau_1) \hat U (\eta_2,\tau_2) +  \hat U(\eta-\eta_2,\tau_1) \hat V (\eta_2,\tau_2) \Big].
\eea
\end{proposition}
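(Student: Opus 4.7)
The plan is to simply combine the three exact Fourier expressions already derived in the preceding lemmas and collapse the sum of two cosines into a product of two sines via the prosthaphaeresis identity. Recall from the definition \fref{id:defQ2} that
$$Q_2(U,V) = J_1 + J_2 + \tilde J_2,$$
where $J_1=2\textrm{Re}\,\E(\overline{W_V(Y)}W_U(Y))$, $J_2=2\textrm{Re}\,\E(\bar Y W_V(W_U(Y)))$, and $\tilde J_2=2\textrm{Re}\,\E(\bar Y W_U(W_V(Y)))$ is obtained from $J_2$ by exchanging the roles of $U$ and $V$.

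First, Lemma \ref{lem:exprQ21} furnishes the explicit expression of $\hat J_1(\eta)$ with cosine phase
$$A := (t-\tau_1)(|\eta|^2-2\tilde\eta\cdot\eta) - (\tau_1-\tau_2)|\tilde\eta|^2$$
and symmetric bilinear factor $\hat U(\eta-\tilde\eta,\tau_1)\hat V(\tilde\eta,\tau_2) + \hat V(\eta-\tilde\eta,\tau_1)\hat U(\tilde\eta,\tau_2)$. Next, Lemma \ref{lem:exprQ22} gives $\hat J_2(\eta)$ with an opposite sign, cosine phase
$$B := (t-\tau_1)|\eta|^2 + (\tau_1-\tau_2)|\tilde\eta|^2,$$
and a single product $\hat V(\eta-\tilde\eta,\tau_1)\hat U(\tilde\eta,\tau_2)$. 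Interchanging $U$ and $V$ in Lemma \ref{lem:exprQ22} (and noting that the phase $B$ is invariant under this exchange) yields $\hat{\tilde J_2}(\eta)$, which differs only in that the last factor becomes $\hat U(\eta-\tilde\eta,\tau_1)\hat V(\tilde\eta,\tau_2)$. Adding them, the bilinear factor in $\hat J_2+\hat{\tilde J_2}$ becomes the same symmetric one as in $\hat J_1$, so one obtains the common factorization
$$\hat Q_2(U,V)(\eta) = 2\!\int_0^t\!\!d\tau_1\!\int_0^{\tau_1}\!\!d\tau_2\!\int\!d\tilde\eta\, h\bigl((t-\tau_1)2\eta+(\tau_1-\tau_2)2\tilde\eta\bigr) [\cos A - \cos B]\bigl[\hat U(\eta-\tilde\eta,\tau_1)\hat V(\tilde\eta,\tau_2)+\hat V(\eta-\tilde\eta,\tau_1)\hat U(\tilde\eta,\tau_2)\bigr].$$

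Finally, I would apply the elementary identity $\cos A - \cos B = -2\sin\tfrac{A+B}{2}\sin\tfrac{A-B}{2}$. A direct computation gives
$$\tfrac{A+B}{2} = (t-\tau_1)(|\eta|^2-\tilde\eta\cdot\eta), \qquad \tfrac{A-B}{2} = -\bigl[(t-\tau_1)\tilde\eta\cdot\eta + (\tau_1-\tau_2)|\tilde\eta|^2\bigr],$$
so that $\cos A - \cos B = 2\sin((t-\tau_1)(|\eta|^2-\tilde\eta\cdot\eta))\sin((t-\tau_1)\tilde\eta\cdot\eta + (\tau_1-\tau_2)|\tilde\eta|^2)$ using the oddness of $\sin$. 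Substituting this into the displayed formula and relabelling $\tilde\eta$ as $\eta_2$ yields exactly the claimed identity \fref{id:expressionQ2}, with the overall constant $2\times 2 = 4$.

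There is no real obstacle: the only mildly delicate point is verifying that $J_2+\tilde J_2$ symmetrizes the bilinear factor $\hat U\hat V$ in the same way that $J_1$ already is symmetric, which is needed so that both contributions can be collected under a single bracket before applying the trigonometric identity. The whole computation is a bookkeeping exercise on top of Lemmas \ref{lem:exprQ21}--\ref{lem:exprQ22}, whose significance is that the resulting product of two sines manifestly vanishes to high order when $\eta$ or $\tilde\eta$ tends to zero — this is the quadratic low-frequency cancellation described in the Strategy subsection, which will be exploited in the subsequent proof of Proposition \ref{pr:continuiteQ2}.
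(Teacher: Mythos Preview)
Your proof is correct and follows essentially the same approach as the paper: both combine the explicit Fourier expressions from Lemmas \ref{lem:exprQ21} and \ref{lem:exprQ22} (the latter applied once with $(V,U)$ and once with $(U,V)$ to symmetrize the bilinear factor) and then collapse $\cos A-\cos B$ into a product of two sines. The paper phrases the trigonometric step as $\cos(\theta-\phi)-\cos(\theta+\phi)=2\sin\theta\sin\phi$ with $\theta=\tfrac{A+B}{2}$ and $\phi=\tfrac{B-A}{2}$, which is exactly your prosthaphaeresis identity.
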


\begin{proof} Recall \fref{id:defQ2}. We sum the expression found in Lemmas \ref{lem:exprQ21} and \ref{lem:exprQ22}:
\begin{multline*}
\hat Q_2(U,V) (\eta)=  2 \int_{0}^t d\tau_1 \int_{0}^{\tau_1} d\tau_2 \int d\eta_2 h( (t-\tau_1)2\eta + (\tau_1 - \tau_2) 2\eta_2 ) \\
\Big[ \cos\Big((t-\tau_1)(\eta^2-2\eta_2\cdot \eta)-(\tau_1-\tau_2)\eta_2^2 \Big) - \cos\Big(( t-\tau_1) \eta^2 + (\tau_1 - \tau_2)\eta_2^2\Big)\Big] \\
\Big[ \hat V(\eta-\eta_2,\tau_1) \hat U (\eta_2,\tau_2) +  \hat U(\eta-\eta_2,\tau_1) \hat V (\eta_2,\tau_2) \Big].
\end{multline*}
Then we just performed a trigonometric transformation: using
$$
\cos(\theta - \phi) - \cos(\theta + \phi) = 2\sin(\theta) \sin (\phi)
$$
with 
$$
\theta = (t-\tau_1)(\eta^2-\eta_2\cdot \eta) \textrm{ and }
\phi = (t-\tau_1)\eta_2\cdot \eta + (\tau_1 - \tau_2)\eta_2^2,
$$
we get the desired identity \fref{id:expressionQ2}.

\end{proof}

With the explicit expression \fref{id:expressionQ2} at hand for $Q_2$, we can now look in detail to bound this term from $(L^2_{t,x})^2$ into $L^2_{t,x}$ and $L^{\infty}_tL^2_x$. The kernel appearing in the expression \fref{id:expressionQ2} of $Q_2$ satisfies the following bound.

\begin{lemma}\label{lem-normK} Setting 
\be \label{id:defK}
K_{\eta,\eta_2}(t,s) = h\left( 2t\eta + s 2\eta_2 \right) 
\sin \left(t(|\eta|^2-\eta_2\cdot \eta)\right) \sin\left(t\eta_2\cdot \eta + s|\eta_2|^2\right) ,
\ee
we have for all $\eta,\eta_2\in \R^3$ not collinear, and $p=1,2$:
$$
\|K_{\eta,\eta_2}\|_{L^2_t,L^p_s}^2 \lesssim (|\eta|^2|\eta_2|^2 - (\eta\cdot\eta_2)^2)^{-\frac 12}C_p(h), \quad \quad C_p(h)=\int_{\mathbb R} dv \left( \int_{\mathbb R} du |u|^{\frac1p-\frac 12}|h|^p\left(\sqrt{u^2+v^2}\right) \right)^{\frac 2p}.
$$
Moreover, $C_1(h)$ and $C_2(h)$ are finite under the assumptions of the main Theorem \ref{th:main}
\end{lemma}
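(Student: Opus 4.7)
My plan is to decouple the two sine phases by a single linear change of coordinates $(t,s)\mapsto(u,v)$ that also preserves the radial quantity $|2t\eta+2s\eta_2|^2=u^2+v^2$, reducing the mixed-norm estimate to a product of one-variable oscillatory integrals. The right choice is motivated by the identity $\beta(t,s)=\eta_2\cdot(t\eta+s\eta_2)$, which suggests expressing $2t\eta+2s\eta_2$ in the orthonormal frame of $\mathrm{span}(\eta,\eta_2)$ whose second axis is $\eta_2/|\eta_2|$. With $D=\sqrt{|\eta|^2|\eta_2|^2-(\eta\cdot\eta_2)^2}$, setting
\[
u=\tfrac{2tD}{|\eta_2|},\qquad v=\tfrac{2t(\eta\cdot\eta_2)}{|\eta_2|}+2s|\eta_2|,
\]
one verifies $u^2+v^2=|2t\eta+2s\eta_2|^2$, $dt\,ds=du\,dv/(4D)$, and the crucial decoupling $\alpha=\lambda u$, $\beta=\mu v$ with $\lambda=|\eta_2|(|\eta|^2-\eta\cdot\eta_2)/(2D)$ and $\mu=|\eta_2|/2$.

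In the new variables, $ds=dv/(2|\eta_2|)$ at fixed $t$ and $dt=du\,|\eta_2|/(2D)$, yielding
\[
\|K_{\eta,\eta_2}\|_{L^2_t L^p_s}^2=\frac{|\eta_2|}{2D\,(2|\eta_2|)^{2/p}}\int du\,|\sin\lambda u|^2\Big(\int dv\,|h(\sqrt{u^2+v^2})|^p|\sin\mu v|^p\Big)^{2/p}.
\]
For $p=2$, bounding both sines by $1$ and recognising the 2D integral of $|h|^2$ gives $\|K\|_{L^2_{t,s}}^2\leq C_2(h)/(4D)$. For $p=1$, use $|\sin\lambda u|\leq 1$ together with the elementary bound $|\sin x|\leq \min(|x|,1)\leq |x|^{1/2}$ to write $|\sin\mu v|\leq|\mu|^{1/2}|v|^{1/2}$; since $|h(\sqrt{u^2+v^2})|$ is symmetric in $u,v$, relabelling the dummy variables yields $\int du\,(\int dv\,|v|^{1/2}|h|)^2=C_1(h)$, and the prefactors simplify to give $\|K\|_{L^2_t L^1_s}^2\leq C_1(h)/(16D)$.

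The finiteness of $C_p(h)$ under the hypotheses of Theorem \ref{th:main} reduces to elementary estimates on $h$: by polar coordinates, $C_2(h)=2\pi\int_0^\infty r|h(r)|^2\,dr$, finite since $h$ is bounded and $(1+r)|h|\in L^1$; for $C_1(h)$, one substitutes $u\mapsto r=\sqrt{u^2+v^2}$ to rewrite the inner integral as an Abel-type integral $2\int_{|v|}^\infty r|h|(r^2-v^2)^{-1/4}dr$, then splits the $r$-integration at $r=2|v|$ and uses Cauchy--Schwarz to bound the result by the quantities $\int_0^\infty(1+r)|h|\,dr$ and $\int_0^\infty(|h'|/r+|h''|)\,dr$ appearing in the hypotheses.

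The main obstacle is finding the right change of variables in the first step: the problem looks symmetric in $\eta$ and $\eta_2$, but the correct coordinates are asymmetric, aligned with $\eta_2$, so that $\beta$ (which only involves $\eta_2$ contracted against $t\eta+s\eta_2$) becomes a function of $v$ alone. Once this is done the bound reduces to one-dimensional oscillatory estimates for $\sin$, and both the $L^2_{t,s}$ and $L^2_tL^1_s$ cases follow from the same computation by choosing how much of the inequality $|\sin x|\leq |x|^{1/2}$ to exploit.
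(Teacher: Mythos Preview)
Your change of variables and the resulting mixed-norm estimate are essentially the paper's proof, up to swapping the names of $u$ and $v$ and an overall factor of $2$: the paper takes $u=|\eta_2|s+t(\eta\cdot\eta_2)/|\eta_2|$ (so $u$ carries the $s$-dependence) and $v=tD/|\eta_2|$, bounds the first sine by $1$ immediately, and then uses $|\sin(|\eta_2|u)|\le(|\eta_2||u|)^{1/p-1/2}$ exactly as you do. Your computation of the prefactors and the reduction to $C_p(h)$ is correct.

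The one place where you diverge is the finiteness of $C_1(h)$. Your sketch (split at $r=2|v|$, Cauchy--Schwarz, invoke $\int(|h'|/r+|h''|)$) is vague and brings in hypotheses that are not needed here. The paper's argument is cleaner and uses only $\int_0^\infty r|h|<\infty$: expand the square in $C_1(h)=\int dv\,(\int du\,|u|^{1/2}|h|)^2$ as a double $du\,du'$ integral, substitute $u\mapsto w=\sqrt{u^2+v^2}$ and $u'\mapsto w'$, and then do the $v$-integral first using $(w^2-v^2)^{-1/4}(w'^2-v^2)^{-1/4}\le(\min(w,w')^2-v^2)^{-1/2}$ together with $\int_0^a(a^2-v^2)^{-1/2}dv=\pi/2$. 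This yields $C_1(h)\lesssim(\int_0^\infty r|h|(r)\,dr)^2$ directly. Your treatment of $C_2(h)$ is fine and matches the paper.
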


\begin{proof} We first perform a geometric change variables. Projecting $\eta$ onto the directions $\eta_2$ and $\eta_2^\perp$ gives $\eta=\frac{\eta.\eta_2}{|\eta_2|^2}\eta_2+\left(\eta-\frac{\eta_2.\eta}{|\eta_2|^2}\eta_2\right)$. Hence the following orthogonal decomposition for the argument of $h$ in \fref{id:defK} and the associated Pythagorean identity:
$$
t\eta + s \eta_2=\left(t\frac{\eta.\eta_2}{|\eta_2|^2}+s\right)\eta_2+t\left(\eta-\frac{\eta_2.\eta}{|\eta_2|^2}\eta_2\right), \quad \quad |t\eta + s \eta_2|^2=  u^2(t,s)+v(t)^2
$$
where
$$
u (t,s) = |\eta_2|s + t \frac{\eta\cdot \eta_2}{|\eta_2|} \textrm{ and }v(t) = t \frac{\sqrt{|\eta|^2|\eta_2|^2- (\eta\cdot\eta_2)^2}}{|\eta_2|}.
$$
The arguments of the sine in \fref{id:defK} are $t(|\eta|^2-\eta_2\cdot \eta)=v(t)(|\eta|^2|\eta_2|^2- (\eta\cdot\eta_2)^2)^{-1/2}|\eta_2|(|\eta|^2-\eta_2\cdot \eta)$ and $t\eta_2\cdot \eta + s|\eta_2|^2=u(t,s)|\eta_2|$. Therefore, recalling that $h$ is radial, we have
\begin{multline*}
|K_{\eta_,\eta_2}(t,s)| \\
=|h\left(\sqrt{u^2(t,s)+v(t)^2}\right) \sin\left(v(t)(|\eta|^2|\eta_2|^2- (\eta\cdot\eta_2)^2)^{-1/2}|\eta_2|(|\eta|^2-\eta_2\cdot \eta)\right) \sin\left(u(t,s)|\eta_2|\right)|\\
\leq  |h\left(\sqrt{u^2(t,s)+v(t)^2}\right)| |\sin\left(u(t,s)|\eta_2|\right)|,
\end{multline*}
using $|\sin|\leq 1$. Doing the change of variable $s \rightarrow u(t,s)$ at fixed $t$, we thus have
$$
\|K(t, \cdot)\|_{L^p_s}^p = \int_{\mathbb R} du |\eta_2|^{-1} \left|h\left(\sqrt{u^2+ v(t)^2}\right)\right|^p \left|\sin\left( |\eta_2| u\right)\right|^p.
$$
For the second part of the norm, we do the change of variable $t\rightarrow v(t)$ and get
$$
\|K\|_{L^2_t,L^p_s}^2  \leq \int_{\mathbb R^3} \frac{|\eta_2|dv}{\sqrt{|\eta|^2|\eta_2|^2- (\eta\cdot\eta_2)^2}} \left| \int_{\mathbb R^3} du |\eta_2|^{-1}\left|h\left(\sqrt{u^2+v^2}\right)\right|^p \left|\sin (|\eta_2|u)\right|^p \right|^{\frac 2p}.
$$
We finally use the inequality $|\sin (|\eta_2|u)| \leq (|\eta_2| \;|u|)^{1/p-1/2}$ for $p=1,2$ to get the desired bound:
$$
\|K_{\eta,\eta_2}\|_{L^2_t,L^p_s}^2 \lesssim (|\eta|^2|\eta_2|^2 - (\eta\cdot\eta_2)^2)^{-\frac 12}\int_{\mathbb R} dv \left( \int_{\mathbb R} du |u|^{\frac 1p-\frac 12}\left|h\right|^p\left(\sqrt{u^2+v^2}\right) \right)^{\frac 2p}.
$$
We finally bound the integral in $C_p(h)$ using the assumptions of Theorem \ref{th:main}. First, for $p=2$:
$$
C_2(h)=C \int_0^\infty r h^2\lesssim \| h \|_{L^{\infty}}\int_0^\infty r|h|(r)dr \lesssim \| f\|_{L^2(\mathbb R^3)}^2\int_0^\infty r|h|(r)dr,
$$
using that $h$ is the Fourier transform of $f$. Similarly, for $p=1$, developing the square, changing variables $u\mapsto \sqrt{u^2+v^2}=w$, and using $\int_0^a (a^2-v^2)^{-1/2}dv=\pi/2$ for any $a>0$:
\bee
C_1(h)&=&\int_{\mathbb R^3} dvdudu' |u|^{\frac 12}|h|\left(\sqrt{u^2+v^2}\right)|u'|^{\frac 12}|h|\left(\sqrt{u^{'2}+v^2}\right)\\
&=& 8 \int_{[0,\infty)^3} dvdwdw' \frac{w}{\sqrt[4]{w^2-v^2}}|h|(w)\frac{w'}{\sqrt[4]{w^{'2}-v^2}}|h|(w'){\bf 1}(|v|\leq w){\bf 1}(|v|\leq w')\\
&\leq & 8 \int_{[0,\infty)^2} dwdw' w|h|(w)w'|h|(w') \int_0^{\min(w,w')}\frac{dv}{\sqrt{\min(w,w')^2-v^2}}\leq \frac{\pi}{4} \left(\int_0^\infty r|h|(r)dr\right)^2.
\eee

\end{proof}

\begin{proposition} \label{prop:QuadV2.2} There exists $C(h,w)$ such that for all $U,V \in L^2_{t}H^{1/2}_x$, we have 
\be \label{bd:Q2}
\|Q_2(V,U)\|_{L^2_{t,x}\cap L^{\infty}_tL^2_x} \leq C(h,w) \|V\|_{L^2_{t}H^{1/2}_x} \|U\|_{L^2_{t}H^{1/2}_x}.
\ee
\end{proposition}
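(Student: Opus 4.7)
My plan is to combine the explicit Fourier-side formula \eqref{id:expressionQ2} for $\hat Q_2$ with the kernel bound of Lemma~\ref{lem-normK}, so as to reduce Proposition~\ref{prop:QuadV2.2} to a purely spatial bilinear Fourier estimate. By bilinearity and density it suffices to establish the two bounds for $U,V\in \mathcal{S}(\R\times\R^3)$, after which the full statement follows by approximation in $L^2_t H^{1/2}_x$.

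The change of variables $a=t-\tau_1$, $b=\tau_1-\tau_2$ in \eqref{id:expressionQ2} yields the pointwise estimate
\[
|\hat Q_2(t,\eta)| \lesssim \int d\eta_2 \int_0^\infty\!\int_0^\infty da\,db\, |K_{\eta,\eta_2}(a,b)|\, \bigl[|\hat V(\eta-\eta_2,t-a)|\,|\hat U(\eta_2,t-a-b)|+(\mathrm{sym})\bigr].
\]
For fixed $\eta, \eta_2$ I view the inner $b$-integral as a time-convolution of $|\hat U(\eta_2,\cdot)|$ with $|K_{\eta,\eta_2}(a,\cdot)|$. Taking the $L^2_t$ norm, Cauchy--Schwarz in $a$ followed by Young's inequality in $b$ gives
\[
\Bigl\|\iint |K_{\eta,\eta_2}(a,b)| |\hat V(\eta-\eta_2,t-a)||\hat U(\eta_2,t-a-b)|\,da\,db\Bigr\|_{L^2_t} \lesssim \|K_{\eta,\eta_2}\|_{L^2_a L^1_b}\, \|\hat V(\eta-\eta_2,\cdot)\|_{L^2_\tau}\|\hat U(\eta_2,\cdot)\|_{L^2_\tau},
\]
while a single Cauchy--Schwarz in $(a,b)$ yields the analogous $L^\infty_t$ bound with $\|K_{\eta,\eta_2}\|_{L^2_{a,b}}$ replacing $\|K_{\eta,\eta_2}\|_{L^2_a L^1_b}$. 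Lemma~\ref{lem-normK} with $p=1$ and $p=2$ respectively controls both kernel norms by $C_p(h)^{1/2}|\eta\times\eta_2|^{-1/2}$, and the hypotheses of Theorem~\ref{th:main} ensure $C_1(h),C_2(h)<\infty$.

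Setting $F(\eta) := \|\hat V(\eta,\cdot)\|_{L^2_\tau}$ and $G(\eta) := \|\hat U(\eta,\cdot)\|_{L^2_\tau}$, so that $\|\langle\eta\rangle^{1/2}F\|_{L^2_\eta}\sim\|V\|_{L^2_t H^{1/2}_x}$ and likewise for $G,U$ by Plancherel in $x$, taking the $L^2_\eta$ norm of the resulting pointwise bound reduces both halves of the Proposition to the bilinear Fourier estimate
\[
\Bigl\|\int d\eta_2\,\frac{F(\eta-\eta_2)G(\eta_2)}{|\eta\times\eta_2|^{1/2}}\Bigr\|_{L^2_\eta} \lesssim \|\langle\eta\rangle^{1/2}F\|_{L^2_\eta}\|\langle\eta\rangle^{1/2}G\|_{L^2_\eta}.
\]

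This last bilinear estimate is the main obstacle. The kernel $|\eta\times\eta_2|^{-1/2}$ is singular on the four-dimensional collinear set $\{\eta\parallel\eta_2\}\subset\R^6$ and transforms covariantly under the homogeneous weight $|\eta|^{1/2}$ on each input (a scaling check at frequency scale $N$ gives left-hand side of size $N^{7/2}$ against $N^4$ on the right for $N\gg 1$, and $N^{7/2}$ vs $N^{3}$ for $N\ll 1$, both consistent). My plan is to decompose $\eta_2 = s\hat\eta + y$ with $y\in\eta^\perp\cong\R^2$, so that $|\eta\times\eta_2|=|\eta|\,|y|$; the transverse factor $|y|^{-1/2}$ on $\R^2$ is locally integrable and can be absorbed via Cauchy--Schwarz in $y$ against one factor of the $H^{1/2}$-regularity, while the parallel $s$-integration closes by one-dimensional Young convolution and Plancherel. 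The low-frequency regime $|\eta|\lesssim 1$, where the inhomogeneous weight $\langle\eta\rangle$ and the singular factor $|\eta|^{-1/2}$ interact most delicately, is expected to require a separate argument via a Littlewood--Paley decomposition to split $\langle\eta\rangle\sim 1$ from $\langle\eta\rangle\sim|\eta|$.
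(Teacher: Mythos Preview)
Your reduction to the bilinear Fourier estimate is correct and essentially matches the paper's argument: the paper uses the same kernel estimates from Lemma~\ref{lem-normK} with $p=1,2$, applies the same time-convolution bounds (stated there as abstract kernel continuity estimates for $u(t)=\int g(s)h(s')K(t-s,s-s')\,ds\,ds'$), and arrives at exactly the bilinear inequality you identify.

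The gap is in your treatment of the bilinear estimate itself, which you leave incomplete. Your polar decomposition $\eta_2 = s\hat\eta + y$ is $\eta$-dependent, and you do not explain how Cauchy--Schwarz in $y$ interacts with the weight $\langle\eta_2\rangle^{1/2}$ (which mixes $s$ and $y$), nor how the residual factor $|\eta|^{-1/2}$ is absorbed; you explicitly defer the low-frequency regime to an unstated Littlewood--Paley argument. The paper's route is shorter and avoids all of these issues. Working by duality against $\phi\in L^2_\eta$ and writing coordinates $\eta=(\eta_1,\eta_2,\eta_3)$, $\xi=(\xi_1,\xi_2,\xi_3)$ for the integration variable, one simply bounds
\[
|\eta|^2|\xi|^2-(\eta\cdot\xi)^2 \;\ge\; (\eta_1\xi_2-\eta_2\xi_1)^2,
\]
drops the $\langle\cdot\rangle^{-1/2}$ weights (they are $\le 1$), applies H\"older in the third coordinates $\eta_3,\xi_3$, and reduces to the scale-invariant two-dimensional trilinear bound
\[
\int_{(\R^2)^2} |\eta'_1\xi'_2-\eta'_2\xi'_1|^{-1/2}\,a(\eta'-\xi')\,b(\xi')\,c(\eta')\,d\eta'\,d\xi' \;\lesssim\; \|a\|_{L^2(\R^2)}\|b\|_{L^2(\R^2)}\|c\|_{L^2(\R^2)},
\]
which is Lemma~5 of Lewin--Sabin \cite{lewsab2}. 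In this approach the $H^{1/2}$ regularity on $U,V$ enters only to make the reweighted inputs $L^2$; no separate low-frequency argument is needed. This is worth adopting, as it sidesteps precisely the difficulties you flag.
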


\begin{proof} 

We first recall the following continuity estimates for a kernel. Set for $g,h\in L^2 (\R)$ and $K: \mathbb R^2 \rightarrow \mathbb R$, the function $u(t)= \int_{\R^2} g(s)h(s') K(t-s, s - s')  dsds'$, then there holds:
$$
\left\| u \right\|_{L^2(\mathbb R)} \leq \|K\|_{L^2(\R, L^1(\R))} \|g\|_{L^2}\|h\|_{L^2}, \quad \mbox{and} \quad \left\| u \right\|_{L^{\infty}(\mathbb R)} \leq \|K\|_{L^2(\R^2)} \|g\|_{L^2}\|h\|_{L^2}.
$$
The proof of the above is classical, so we safely omit it. Next, from the exact expression \fref{id:expressionQ2} and the definition of the kernel \fref{id:defK}, we obtain:
\begin{multline*}
\hat Q_2(U,V) (\eta) = \\
 2\int d\eta_2 \int_{0}^t dt_1 \int_{0}^{t_1} dt_2 K_{\eta,\eta_2}(t-t_1,t_1-t_2) \Big[\hat U(\eta-\eta_2,t_1) \hat V(\eta_2,t_2) +  \hat V(\eta-\eta_2,t_1) \hat U(\eta_2,t_2)\Big].
\end{multline*}
Therefore, by first using Minkowski inequality, and then combining the above continuity bound and Lemma \ref{lem-normK}:
\begin{multline*}
\|\hat Q_2(U,V) (\eta)\|_{L^2_t\cap L^{\infty}_t} \leq \\
2 \int_{\mathbb R^3} d\eta_2 \big\| \int_{0}^t dt_1 \int_{0}^{t_1} dt_2 K_{\eta,\eta_2}(t-t_1,t_1-t_2) \Big[\hat U(\eta-\eta_2,t_1) \hat V(\eta_2,t_2) +  \hat V(\eta-\eta_2,t_1) \hat U(\eta_2,t_2)\Big]\big\|_{L^2_t\cap L^{\infty}_t}\\
\leq  C_h \int_{\mathbb R^3} d\eta_2 (|\eta|^2|\eta_2|^2 - (\eta\cdot\eta_2)^2)^{-1/4} \Big[\|\hat U(\eta-\eta_2,\cdot)\|_{L^2_t}\| \hat V(\eta_2,\cdot)\|_{L^2_t} +  \|\hat V(\eta-\eta_2,\cdot)\|_{L^2_t}\| \hat U(\eta_2,\cdot)\|_{L^2_t} \Big]
\end{multline*}
where $C_h$ is a constant depending only on $h$ (and not $\eta,\eta_2$). We only treat the first term above, as the same reasoning applies for the second. In order to show $\|\hat Q_2(U,V) (\eta)\|_{L^2_t\cap L^{\infty}_t}\in L^2_\eta$ we reason by duality. By the above formula are left with estimating 
\be \label{id:defIinter} 
I := \int_{(\R^3)^2} d\eta d\xi (|\eta|^2|\xi|^2 - (\eta\cdot\xi)^2)^{-1/4} u(\eta-\xi) v(\xi) \phi(\eta) \langle \eta-\xi\rangle^{-\frac 12}\langle \eta \rangle^{-\frac 12}
\ee
with $u(\eta)=\langle \eta \rangle^{1/2}\|\hat U(\eta,\cdot)\|_{L^2_t}\in L^2(\mathbb R^3)$, $v(\eta)=\langle \eta \rangle^{1/2} \|\hat V(\eta,\cdot)\|_{L^2_t}\in L^2(\mathbb R^3)$ and $\phi \in L^2(\R^3)$ with $\phi\geq0$. We find a lower bound for the singular weight in \fref{id:defIinter} by writing in an orthonormal basis $\xi = (\xi_1,\xi_2,\xi_3)$ and $ \eta = (\eta_1,\eta_2,\eta_3)$:
$$
|\eta|^2|\xi|^2 - (\eta\cdot\xi)^2 = (\xi_1\eta_2 - \xi_2\eta_1)^2 + (\xi_1\eta_3 - \xi_3\eta_1)^2 +(\xi_3\eta_2 - \xi_2\eta_3)^2 \geq (\xi_1\eta_2 - \xi_2\eta_1)^2.
$$
Therefore, with $\eta' = (\eta_1,\eta_2)\in \mathbb R^2$ and $\xi' = (\xi_1,\xi_2)\in \mathbb R^2$, we have
\bee
I &\leq& \int_{(\R^2)^2} d\eta' d\xi' |\xi_1\eta_2 - \xi_2\eta_1|^{-1/2} \int_{\R^2} d\xi_3d\eta_3 u(\eta'-\xi',\eta_3-\xi_3) v(\xi',\xi_3) \phi(\eta',\eta_3) \langle \eta-\xi\rangle^{-\frac 12}\langle \eta \rangle^{-\frac 12}\\
&& \quad \leq  \int_{(\R^2)^2} d\eta' d\xi' |\xi_1\eta_2 - \xi_2\eta_1|^{-1/2} \| u(\eta'-\xi',\cdot)\|_{L^2_{\eta_3}} \|v(\xi',\cdot)\|_{L^2_{\eta_3}}\| \phi(\eta',\cdot)\|_{L^2_{\eta_3}}
\eee
where we used H\"older inequality for the last line. We use Lemma 5, in \cite{lewsab2} and get:
$$
I \lesssim \|u\|_{L^2_{\eta'}L^2_{\eta_3}}\|v\|_{L^2_{\eta'}L^2_{\eta_3}}\| \phi \|_{L^2_{\eta'}L^2_{\eta_3}}\lesssim \| U \|_{L^2_{t}H^{1/2}_x}\| V\|_{L^2_{t}H^{1/2}_x}\| \phi \|_{L^2(\mathbb R^3)}.
$$
where the last inequality is a simple consequence of the definition of $u$ and $v$.

\end{proof}

The proof of Proposition \ref{pr:continuiteQ2} requires additional results than Proposition \ref{prop:QuadV2.2}, involving higher regularity or space-time Lebesgue exponents. They are easier consequences of the linear bounds proved in the previous Section.

\begin{lemma}\label{prop:QuadV2.1} For all $U,V\in \Theta_V$, there holds:
$$
\|Q_2(U,V)\|_{L^{10/3}_{t,x}} \lesssim \|U\|_{H^{1/2}_{t,x}} \|V\|_{H^{1/2}_{t,x}}, \quad \quad \| \langle \nabla \rangle^{\frac 12} Q_2(U,V)\|_{L^{2}_{t,x}} \lesssim \|U\|_{L^2_{t}H^{1/2}_x} \|V\|_{L^2_{t}H^{1/2}_x},
$$
\end{lemma}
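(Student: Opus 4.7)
My plan is to decompose $Q_2(U,V)$ using \fref{id:defQ2} into the three summands
\begin{equation*}
A := \E\bigl[\overline{W_V(Y)}\, W_U(Y)\bigr], \quad B := \E\bigl[\bar Y \, W_V(W_U(Y))\bigr], \quad C := \E\bigl[\bar Y \, W_U(W_V(Y))\bigr],
\end{equation*}
and to treat the two announced estimates separately: the first directly from the linear bounds of Section~\ref{sec:linear}, and the second by revisiting the Fourier kernel duality scheme used in the proof of Proposition~\ref{prop:QuadV2.2}.

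For $\|Q_2(U,V)\|_{L^{10/3}_{t,x}}$, the contribution of $A$ will follow from Cauchy--Schwarz in $\omega$ and H\"older in $(t,x)$ applied with the symmetric exponent $20/3$ on both factors (so that $\tfrac{3}{20}+\tfrac{3}{20}=\tfrac{3}{10}$). The required linear bound $\|W_V(Y)\|_{L^{20/3}_{t,x}L^2_\omega}\lesssim\|V\|_{L^2_t H^{1/2}_x}$ is an instance of \fref{bd:potentialtorandom} at the admissible triple $(p,q,s)=(20/3,20/3,3/4)$, combined with the continuous embedding $L^2_t H^{1/2}_x\hookrightarrow L^2_t\dot H^{1/4}_x$ (valid since $|\xi|^{1/4}\leq\langle\xi\rangle^{1/2}$). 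For $B$ (and $C$ by symmetry), since $\|Y\|_{L^\infty_{t,x}L^2_\omega}$ is the finite constant $(\int|f|^2)^{1/2}$, Cauchy--Schwarz in $\omega$ reduces the task to bounding $\|W_V(W_U(Y))\|_{L^{10/3}_{t,x}L^2_\omega}$. Because $L^2_\omega$ is a Hilbert space, the inhomogeneous Schr\"odinger Strichartz estimate extends to $L^2_\omega$-valued sources and yields $\|W_V(W_U(Y))\|_{L^{10/3}_{t,x}L^2_\omega}\lesssim\|V\cdot W_U(Y)\|_{L^{10/7}_{t,x}L^2_\omega}$. H\"older with $\tfrac{1}{2}+\tfrac{1}{5}=\tfrac{7}{10}$ together with the bound $\|W_U(Y)\|_{L^5_{t,x}L^2_\omega}\lesssim\|U\|_{L^2_{t,x}}$ from \fref{bd:WVnablaY} then close the estimate.

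For $\|\langle\nabla\rangle^{1/2}Q_2(U,V)\|_{L^2_{t,x}}$, I will adapt the duality scheme used in the proof of Proposition~\ref{prop:QuadV2.2}. Starting from the Fourier identity \fref{id:expressionQ2} and the kernel bound of Lemma~\ref{lem-normK}, the task reduces, for $\psi\in L^2(\mathbb{R}^3)$ with $\|\psi\|_{L^2}\leq1$, to controlling
\begin{equation*}
\tilde I := \int |\eta|^{1/2}\bigl(|\eta|^2|\eta_2|^2-(\eta\cdot\eta_2)^2\bigr)^{-1/4}\, u(\eta-\eta_2)\langle\eta-\eta_2\rangle^{-1/2}\, v(\eta_2)\langle\eta_2\rangle^{-1/2}\,\psi(\eta)\, d\eta\, d\eta_2,
\end{equation*}
where $u(\eta)=\langle\eta\rangle^{1/2}\|\hat U(\eta,\cdot)\|_{L^2_t}$ and $v$ is defined analogously in $V$, so that $\|u\|_{L^2_\eta}=\|U\|_{L^2_t H^{1/2}_x}$ and $\|v\|_{L^2_\eta}=\|V\|_{L^2_t H^{1/2}_x}$. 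The new ingredient compared with Proposition~\ref{prop:QuadV2.2} is the triangle inequality $\langle\eta\rangle^{1/2}\lesssim\langle\eta-\eta_2\rangle^{1/2}+\langle\eta_2\rangle^{1/2}$, which I use, after bounding $|\eta|^{1/2}\leq\langle\eta\rangle^{1/2}$, to split $\tilde I$ into two integrals. In each one of them exactly one of the two bracket weights $\langle\cdot\rangle^{-1/2}$ is cancelled by the corresponding $\langle\cdot\rangle^{1/2}$ factor, and the remaining unweighted factor has $L^2_\eta$ norm equal to $\|U\|_{L^2_{t,x}}$ or $\|V\|_{L^2_{t,x}}$ respectively. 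Each of these two integrals then falls within the scope of Lemma~5 of \cite{lewsab2}, exactly as in the proof of Proposition~\ref{prop:QuadV2.2}, yielding a bound by $\|U\|_{L^2_{t,x}}\|V\|_{L^2_t H^{1/2}_x}\|\psi\|_{L^2}$ and $\|U\|_{L^2_t H^{1/2}_x}\|V\|_{L^2_{t,x}}\|\psi\|_{L^2}$ respectively, both dominated by $\|U\|_{L^2_t H^{1/2}_x}\|V\|_{L^2_t H^{1/2}_x}\|\psi\|_{L^2}$ thanks to the embedding $L^2_t H^{1/2}_x\hookrightarrow L^2_{t,x}$.

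The main obstacle I foresee lies in the second bound: one must verify that after the triangle-inequality splitting, the resulting integrals (each with one bracket weight now absent) still fit the unweighted trilinear $L^2\times L^2\times L^2$ estimate supplied by Lemma~5 of \cite{lewsab2}. The key point is that in Proposition~\ref{prop:QuadV2.2}'s proof the weights $\langle\cdot\rangle^{-1/2}$ were bounded by $1$ and played no essential role; removing one of them here simply forces the corresponding factor to be estimated by the weaker norm $\|U\|_{L^2_{t,x}}$ or $\|V\|_{L^2_{t,x}}$, which is still under control by the hypothesis $U,V\in L^2_t H^{1/2}_x$.
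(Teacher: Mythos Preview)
Your treatment of the $L^{10/3}_{t,x}$ estimate is identical to the paper's: the same H\"older splitting with exponent $20/3$ for the $A$ term and the same Strichartz-plus-H\"older chain for $B$ and $C$.

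For the $L^2_tH^{1/2}_x$ estimate, your route is genuinely different from the paper's. The paper first invokes Proposition~\ref{prop:QuadV2.2} for the unweighted $L^2_{t,x}$ bound, then restricts attention to $P_{\geq 1}Q_2$ where $\langle\nabla\rangle^{1/2}$ can be replaced by a full gradient. It then treats the three constituents of $Q_2$ separately: for $A$ it differentiates via the product rule $\partial_{x_j}\E(\overline{W_V(Y)}W_U(Y))=\E(\overline{W_{\partial_{x_j}V}(Y)}W_U(Y))+\E(\overline{W_V(Y)}W_{\partial_{x_j}U}(Y))$ and closes with the linear bounds \fref{bd:potentialtorandom} and \fref{bd:WVnablaY}; for $B$ and $C$ it uses a physical-space duality argument with a test function in $L^2_t\dot H^{-1/2}_x$. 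Your approach instead stays within the explicit Fourier framework \fref{id:expressionQ2} and Lemma~\ref{lem-normK}, absorbing the extra weight $\langle\eta\rangle^{1/2}$ via the elementary inequality $\langle\eta\rangle^{1/2}\lesssim\langle\eta-\eta_2\rangle^{1/2}+\langle\eta_2\rangle^{1/2}$. This handles all three terms of $Q_2$ at once without the frequency projection or the term-by-term analysis, and lands directly on the same trilinear estimate (Lemma~5 of \cite{lewsab2}) used in Proposition~\ref{prop:QuadV2.2}. Your observation that the bracket weights $\langle\cdot\rangle^{-1/2}$ in that proof are simply bounded by $1$, so that dropping one of them causes no harm, is exactly right. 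The paper's approach has the advantage of isolating the mechanism (dispersive smoothing) for each term, while yours is shorter and more unified.
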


\begin{proof} We start with the first estimate. We recall that
\be \label{id:Q2L103inter}
Q_2(U,V) = 2\Re \E \Big( \overline{W_V(Y)}W_U(Y) + \bar Y( W_V\circ W_U(Y) + W_U\circ W_V(Y))\Big).
\ee
To treat the first term above, we employ first Cauchy-Schwarz and H\"older's inequality, then \fref{bd:potentialtorandom} with $p=q=20/3$ and $s=3/4$ together with Christ-Kiselev Lemma:
\bea
\nonumber \| \mathbb E(\overline{W_V(Y)}W_U(Y)) \|_{L^{10/3}_{t,x}}&\lesssim &\| W_V(Y)\|_{L^{20/3}_{t,x}L^2_\omega} \| W_U(Y) \|_{L^{20/3}_{t,x}L^2_\omega}\\
\label{bd:WVWUinter}&&\quad \lesssim \| V \|_{L^2_{t}\dot H^{3/4-1/2}} \| U \|_{L^2_{t}\dot H^{3/4-1/2}}\lesssim \| V \|_{L^2_{t}H^{1/2}_x} \| U \|_{L^2_{t}H^{1/2}_x}.
\eea
For the second and third terms, we use first Cauchy-Schwarz and H\"older inequalities, then the dual Strichartz estimate \fref{bd:strichartzdual}, then H\"older with $\frac7{10} = \frac1{5} + \frac1{2}$, then the linear bound \fref{bd:WVnablaY} to bound:
\bee
&&\| \mathbb E ( \bar Y W_V\circ W_U(Y))\|_{L^{10/3}_{t,x}}\lesssim \| Y\|_{L^{\infty}_{t,x}L^2_\omega}\| W_V\circ W_U(Y) \|_{L^{10/3}_{t,x}L^2_\omega}\\
&& \quad \quad \quad \quad  \lesssim \| \int_0^t S(t-s)[V W_U(Y)] ds\|_{L^{10/3}_{t,x}L^2_\omega}\lesssim \| V W_U(Y)] \|_{L^{10/7}_{t,x}L^2_\omega}\\
&&\quad \quad \quad \quad \quad \quad \quad \quad \lesssim \| V \|_{L^2_{t,x}} \|W_U(Y) \|_{L^{5}_{t,x}L^2_\omega}\lesssim \| V \|_{L^2_{t,x}} \| U \|_{L^{2}_{t,x}}\lesssim \| V \|_{L^2_{t}H^{1/2}_x} \| U \|_{L^{2}_{t}H^{1/2}_x}.
\eee
We inject the bound \fref{bd:WVWUinter} and the above bound (noticing that it treats the second and third terms simultaneously) in \fref{id:Q2L103inter}, yielding the first estimate of the Lemma.

We now turn to the second estimate. Note that $ \|Q_2(U,V)\|_{L^{2}_{t,x}} \lesssim \|U\|_{L^2_{t}H^{1/2}_x} \|V\|_{L^2_{t}H^{1/2}_x}$ from Proposition \ref{prop:QuadV2.2}. Hence, to obtain the second estimate of the Lemma, it suffices to prove:
\be \label{bd:intermediateQ2highreg}
\|P_{\geq 1}Q_2(U,V)\|_{L^{2}_{t}H^{1/2}_x} \lesssim \|U\|_{L^2_{t}H^{1/2}_x} \|V\|_{L^2_{t}H^{1/2}_x}
\ee
where $P_{\geq 1}$ projects on frequencies $|\xi|\geq1$, that is $\mathcal F(P_{\geq 1}u)={\bf 1}_{|\xi|\geq 1}\hat u$.
For the first term in \fref{id:Q2L103inter}, notice that $\partial_{x_j}\E (\overline{W_V(Y)}W_U(Y))=\E (\overline{W_{\partial_{x_j}V}(Y)}W_{U}(Y))+\E (\overline{W_{V}(Y)}W_{\partial_{x_j}U}(Y))$ from Lemma \fref{lem:exprQ21}. We thus estimate it using the frequency localisation, Cauchy-Schwarz and H\"older, \fref{bd:potentialtorandom} with $p=q=10/3$ and $s=0$ and \fref{bd:WVnablaY}:
\begin{align}
\nonumber & \left\| P_{\geq 1}\E (\overline{W_{V}(Y)}W_U(Y))\right\|_{L^2_t H^{1/2}_x}\\
\nonumber & \quad \quad \lesssim  \left\| \E (\overline{W_{V}(Y)}W_U(Y))\right\|_{L^2_t \dot H^{1}}  \lesssim  \left\| \E (\overline{W_{\nabla V}(Y)}W_U(Y))\right\|_{L^2_{t,x}}+\left\| \E (\overline{W_{V}(Y)}W_{\nabla U}(Y))\right\|_{L^2_{t,x}}\\
 \nonumber&\quad  \qquad \qquad \qquad \qquad \lesssim  \| W_{\nabla V} Y \|_{L^{10/3}_{t,x}L^2_\omega }\| W_{U} Y \|_{L^{5}_{t,x}L^2_\omega }+\| W_{\nabla U} Y \|_{L^{10/3}_{t,x}L^2_\omega }\| W_{V} Y \|_{L^{5}_{t,x}L^2_\omega } \\
\nonumber &\quad \qquad \qquad \qquad \qquad \qquad \lesssim  \| \nabla V \|_{L^2_t\dot H^{-1/2}_xL^2_\omega }\| U \|_{L^2_{t,x}}+ \| \nabla U \|_{L^2_t\dot H^{-1/2}_xL^2_\omega }\| V \|_{L^2_{t,x}}\\
\label{bd:WVWUinter2} &\quad \qquad \qquad \qquad \qquad \qquad \qquad \lesssim \| V \|_{L^2_tH^{1/2}_x}\| W_{U} Y \|_{L^2_{t,x}}.
\end{align}
Next, for the second term in \fref{id:Q2L103inter} we reason by duality. For any $\phi \in L^2_t\dot H^{-1/2}_x$ by Fubini:
\begin{align*}
&\left|\an{ \phi, \E(\bar Y W_V\circ W_U(Y))}_{t,x}\right|  =\left| \an{Y\phi,W_V\circ W_U (Y)}_{t,x,\omega}\right|= \left|\an{Y \phi ,-i\int_0^t S(t-s)(VW_U(Y))ds}_{t,x,\omega}\right|\\
&\qquad \qquad =\left| \an{\int_{\tau}^\infty S(\tau - t)[Y(t) \phi (t)]dt,V W_U(Y)}_{t,x,\omega}\right| \\
&\qquad  \qquad \qquad  \qquad \lesssim \| \int_{\tau}^\infty S(\tau - t)[Y(t)\ \phi (t)]dt \|_{L^{10/3}_{t,x}L^2_\omega}\| V \|_{L^2_{t,x}} \| W_U(Y)\|_{L^{5}_{t,x}L^2_\omega}\\
&\qquad  \qquad  \qquad \qquad  \qquad  \qquad \lesssim \| \phi \|_{L^2_t\dot H^{-1/2}_x} \| V \|_{L^2_{t,x}} \| U \|_{L^2_{t,x}}
\end{align*}
where we applied Cauchy-Schwarz, H\"older, \fref{bd:potentialtorandom} with $s=0$, $p=q=10/3$ and \fref{bd:WVnablaY}. The last term in \fref{id:Q2L103inter} is estimated by duality the very same way as the one above. This proves:
$$
\| P_{\geq 1} \E( \bar Y( W_V\circ W_U(Y) + W_U\circ W_V(Y))) \|_{L^2_t H^{1/2}_x} \lesssim \| V \|_{L^2_{t,x}} \| U \|_{L^2_{t,x}}.
$$
The above inequality, \fref{bd:WVWUinter2} and the decomposition \fref{id:Q2L103inter} imply the desired estimate \fref{bd:intermediateQ2highreg}, ending the proof of the Lemma.

\end{proof}

\begin{remark}\label{rem:laRemarque} We compare briefly with \cite{lewsab2}. To get extra derivatives on $V$, we have refined the estimates on $K_{\eta,\eta_2}$, Lemma \ref{lem-normK}, we introduced derivatives on $Q_2$ using duality arguments, Lemma \ref{prop:QuadV2.1} and on the linear response on the potential, Proposition \ref{pr:freeandpotential}. \end{remark}

We can now end the proof of the main Proposition of this section, and bound the specific quadratic term at hand.

\begin{proof}[Proof of Proposition \ref{pr:continuiteQ2}]

Using interpolation as $2\leq 5/2\leq 10/3$, Lemma \ref{prop:QuadV2.1} and Proposition \ref{prop:QuadV2.2} we get the desired bound:
\bee
\| Q_2(U,V)\|_{\Theta_V} & = & \| Q_2(U,V)\|_{L^2_{t,x}}+\| Q_2(U,V)\|_{L^{5/2}_{t,x}} \lesssim \| Q_2(U,V)\|_{L^2_{t}H^{1/2}_x}+\| Q_2(U,V)\|_{L^{10/3}_{t,x}}\\
&& \quad \quad \lesssim \| U\|_{L^2_{t}H^{1/2}_x}\| V\|_{L^2_{t}H^{1/2}_x}+\| U\|_{L^2_{t,x}}\| V\|_{L^2_{t,x}} \lesssim \| U \|_{\Theta_V}\| V\|_{\Theta_V}.
\eee

\end{proof}

\section{Remaining nonlinear terms}\label{sec:remaining}

We establish here standard bilinear estimates for all nonlinear terms except the specific quadratic ones in $Q_2$ treated in the previous Section. We start with higher order iterates of the operators $W_V$ applied to the equilibrium and with the operator $W_V$ applied to a perturbation $Z$.

\begin{lemma}\label{prop:QuadCl2} There exists $C>0$ such that for all $U,V \in \Theta_V$ and $Z\in \Theta_Z $:
\be \label{bd:quadperturbation}
\|W_V\circ W_U(Y)\|_{\Theta_Z} \leq C \|U\|_{\Theta_V} \|V\|_{\Theta_V}, \quad \mbox{and} \quad \|W_V(Z)\|_{\Theta_Z} \leq C \|V\|_{\Theta_V} \|Z\|_{\Theta_Z}.
\ee

\end{lemma}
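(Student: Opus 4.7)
The plan is to reduce both bounds to a single Duhamel estimate on $W_V(\tilde Z)=-i\int_0^t S(t-\tau)(V\tilde Z)d\tau$, where $\tilde Z$ is either the generic perturbation $Z\in\Theta_Z$ or the specific field $W_U(Y)$. In both cases I would apply the dual Strichartz inequality \eqref{bd:strichartzdual} (valid in the random-field setting by Minkowski, since $10/7\leq 2$) to obtain
$$\|\langle\nabla\rangle^{1/2}W_V(\tilde Z)\|_{L^2_\omega(L^\infty_tL^2_x\cap L^{10/3}_{t,x}\cap L^5_tL^{30/11}_x)}\lesssim \|\langle\nabla\rangle^{1/2}(V\tilde Z)\|_{L^2_\omega L^{10/7}_{t,x}},$$
and then control the right-hand side using the fractional Leibniz rules \eqref{bd:fractionalleibniz1}, \eqref{bd:fractionalleibniz2} combined with H\"older in time. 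The three components of $\Theta_Z$ are recovered as follows: the $L^2_\omega L^{10/3}_tW^{1/2,10/3}_x$ piece is the direct Strichartz output; the $L^2_\omega L^5_{t,x}$ piece follows from the Sobolev embedding $W^{1/2,30/11}(\mathbb R^3)\hookrightarrow L^5(\mathbb R^3)$ in dimension three; and the $L^2_\omega\mathcal C_t(H^{1/2}_x)$ piece follows from the $L^\infty_tH^{1/2}$ Strichartz bound together with the density plus dominated convergence argument already used in Step 1 of the proof of Lemma \ref{lem:strichartzrandom}.

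For the first bound, $\|W_V\circ W_U(Y)\|_{\Theta_Z}\lesssim\|U\|_{\Theta_V}\|V\|_{\Theta_V}$, I would use the vectorial Leibniz rule \eqref{bd:fractionalleibniz2} in space with the split $\tfrac12+\tfrac15=\tfrac7{10}$ and the same split for H\"older in time, yielding
$$\|\langle\nabla\rangle^{1/2}(VW_U(Y))\|_{L^2_\omega L^{10/7}_{t,x}}\lesssim \|V\|_{L^2_tH^{1/2}_x}\|\langle\nabla\rangle^{1/2}W_U(Y)\|_{L^5_{t,x}L^2_\omega},$$
and then invoke the key linear estimate \eqref{bd:WVY} of Proposition \ref{lem:estonWUY} to bound the second factor by $\|U\|_{L^2_tH^{1/2}_x}\leq \|U\|_{\Theta_V}$. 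This case is fairly direct because $W_U(Y)$ already carries the half-derivative in $L^5_{t,x}L^2_\omega$.

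For the second bound, $\|W_V(Z)\|_{\Theta_Z}\lesssim \|V\|_{\Theta_V}\|Z\|_{\Theta_Z}$, I would apply the scalar Leibniz rule \eqref{bd:fractionalleibniz1} at each fixed $\omega$ with \emph{two different} splits of $7/10$ in space to place the derivative on the appropriate factor: in one term, split $\tfrac12+\tfrac15$ puts $\langle\nabla\rangle^{1/2}$ on $V$ in $L^2_x$ and leaves $Z$ in $L^5_xL^2_\omega$; in the other, split $\tfrac25+\tfrac3{10}$ puts $\langle\nabla\rangle^{1/2}$ on $Z$ in $L^{10/3}_xL^2_\omega$ and leaves $V$ in $L^{5/2}_x$. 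Taking $L^2_\omega$ (using Minkowski since $10/7\leq 2$) and applying H\"older in time with the matching splits $\tfrac12+\tfrac15$ and $\tfrac25+\tfrac3{10}$ gives
$$\|\langle\nabla\rangle^{1/2}(VZ)\|_{L^2_\omega L^{10/7}_{t,x}}\lesssim \|V\|_{L^2_tH^{1/2}_x}\|Z\|_{L^2_\omega L^5_{t,x}}+\|V\|_{L^{5/2}_{t,x}}\|\langle\nabla\rangle^{1/2}Z\|_{L^2_\omega L^{10/3}_{t,x}}\lesssim \|V\|_{\Theta_V}\|Z\|_{\Theta_Z},$$
which is exactly the desired estimate.

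The main obstacle is the second bound: one has to find a Leibniz decomposition compatible simultaneously with the two norms of $V$ available in $\Theta_V=L^2_tH^{1/2}_x\cap L^{5/2}_{t,x}$ and the three norms of $Z$ available in $\Theta_Z$. The key point is that no single splitting works for both terms of \eqref{bd:fractionalleibniz1}: if the derivative falls on $V$, one must pair with the $L^5_{t,x}L^2_\omega$ norm of $Z$ via the split $\tfrac12+\tfrac15$, whereas if the derivative falls on $Z$ one is forced into the $L^{10/3}_tW^{1/2,10/3}_xL^2_\omega$ norm of $Z$ and hence the $L^{5/2}_{t,x}$ norm of $V$ via the split $\tfrac25+\tfrac3{10}$. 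Using two different admissible splits for the two Leibniz terms is exactly what makes the argument close at critical regularity.
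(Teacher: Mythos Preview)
Your proposal is correct and follows essentially the same approach as the paper: reduce to the dual Strichartz bound $\|W_V(\tilde Z)\|_{\Theta_Z}\lesssim \|\langle\nabla\rangle^{1/2}(V\tilde Z)\|_{L^2_\omega L^{10/7}_{t,x}}$, then use \eqref{bd:fractionalleibniz2} with the split $\tfrac12+\tfrac15=\tfrac7{10}$ together with \eqref{bd:WVY} for the first bound, and \eqref{bd:fractionalleibniz1} with the two splits $\tfrac12+\tfrac15$ and $\tfrac25+\tfrac3{10}$ for the second. Your emphasis that two distinct H\"older splittings are needed for the two Leibniz terms in the second bound is exactly the point.
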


\begin{proof} Recall the dual Strichartz inequality \fref{bd:strichartzdual}. In particular, for any $f\in L^2_\omega L^{10/7}_tW^{1/2,10/7}_x$:
$$
\| \langle \nabla \rangle^{\frac 12}\int_0^t S(t-s)f(s)ds\|_{L^2_\omega L^{5}_tL^{30/11}_x\cap L^2_\omega  L^{10/3}_{t,x} \cap L^2_\omega L^{\infty}_t L^2_x}\lesssim \| \langle \nabla \rangle^{\frac 12}f \|_{  L^2_\omega L^{10/7}_{t,x}}.
$$
Hence, using in addition to \fref{bd:strichartzdual} and the above inequality the Sobolev embedding $W^{1/2,30/11}_x(\mathbb R^3)\rightarrow L^5_x(\mathbb R^3)$ we obtain the preliminary inequality:
\be \label{bd:WVZinter}
\|W_V (Z)\|_{\Theta_Z}=\| \int_0^t S(t-s)(VZ)ds\|_{\Theta_Z} \leq C \| \langle \nabla \rangle^{\frac 12} (V Z)\|_{L^2_\omega L^{10/7}_{t,x}}.
\ee
For the first term in \fref{bd:quadperturbation} we apply the above estimate with $Z=W_U(Y)$. Applying Minkowski and then the fractional Leibniz estimate \fref{bd:fractionalleibniz2} with $\frac7{10} = \frac12 + \frac15$ gives:
$$
\| \langle \nabla \rangle^{\frac 12} (V W_V (Y))\|_{L^2_\omega L^{10/7}_{t,x}}\lesssim \| \langle \nabla \rangle^{\frac 12} (V W_V (Y))\|_{L^{10/7 }_{t,x}L^2_\omega} \lesssim \| \langle \nabla \rangle^{1/2} V \|_{L^2_{t,x}}\| \langle \nabla \rangle^{1/2} W_V (Y) \|_{L^5_{t,x}L^2_\omega} \lesssim \| V \|_{L^2_tH^{1/2}_x}^2,
$$
where we used \fref{bd:WVY} for the last inequality. Injecting the above estimate in \fref{bd:WVZinter} with $Z=W_V(Y)$ proves the first bound in \fref{bd:quadperturbation}. For the second bound, we apply the fractional Leibniz bound \fref{bd:fractionalleibniz1} with $q_1=2$, $r_1=5$ and $q_2=10/3$ and $r_2=5/2$:
$$
\| \langle \nabla \rangle^{\frac 12} (w*V Z)\|_{L^2_\omega L^{10/7}_{t,x}} \leq \| w*\langle \nabla \rangle^{\frac 12} V \|_{L^{2}_{t,x}}\| Z \|_{L^2_\omega L^{5}_{t,x}}+\| \langle \nabla \rangle Z \|_{L^{10/3}_{t,x}L^2_\omega}\| w*V \|_{L^{5/2}_{t,x}}\lesssim \| V \|_{\Theta_V}\| Z\|_{\Theta_Z}.
$$
The above bound, injected in \fref{bd:WVZinter}, proves the second bound in \fref{bd:quadperturbation}.

\end{proof}

We now turn to the linearised potential created by a perturbation of the form $W_V(Z)$, and to the quadratic potential created by a perturbation $Z$.

\begin{lemma} 
For all $Z,Z'\in \Theta_Z$ there holds:
\be \label{bd:quadembed}
\|\mathbb E(ZZ')\|_{\Theta_V}\leq C \| Z\|_{\Theta_Z}\| Z'\|_{\Theta_Z}.
\ee
For all $V \in \Theta_V$ and $Z \in \Theta_Z$, we have 
\be \label{bd:Q1}
\|Q_1(Z,V)\|_{\Theta_V} \lesssim \|Z\|_{\Theta_Z} \|V\|_{\Theta_V}.
\ee
\end{lemma}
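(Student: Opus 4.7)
The plan is to split each bound according to the decomposition $\Theta_V = L^{5/2}_{t,x}\cap L^2_tH^{1/2}_x$ and control the two pieces by separate arguments, then sum.

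\textbf{Bound \fref{bd:quadembed}.} For the $L^{5/2}_{t,x}$ piece I use the pointwise Cauchy--Schwarz estimate $|\mathbb E(ZZ')(t,x)|\leq \|Z(t,x,\cdot)\|_{L^2_\omega}\|Z'(t,x,\cdot)\|_{L^2_\omega}$ followed by H\"older in $(t,x)$ with $\tfrac15+\tfrac15=\tfrac25$, and then Minkowski's inequality (valid because $5\geq 2$) to pass from $L^5_{t,x}L^2_\omega$ to $L^2_\omega L^5_{t,x}$, which is part of $\Theta_Z$. For the $L^2_tH^{1/2}_x$ piece I commute $\langle\nabla\rangle^{1/2}$ past the expectation and apply Cauchy--Schwarz in $\omega$ pointwise in $(t,x)$ to reduce to
$$\|\langle\nabla\rangle^{1/2}\mathbb E(ZZ')\|_{L^2_{t,x}}\leq \|\langle\nabla\rangle^{1/2}(ZZ')\|_{L^2_{t,x}L^2_\omega}.$$
I then invoke the vector-valued fractional Leibniz \fref{bd:fractionalleibniz2} with spatial exponents $1/q_1+1/r_1=1/2$ and H\"older in time with $1/a_1+1/b_1=1/2$, distributing the half-derivative onto both factors. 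The goal is to choose exponent pairs so that both factors are controlled by interpolation among the three components of the $\Theta_Z$ norm (namely $L^\infty_tH^{1/2}_x$, $L^{10/3}_tW^{1/2,10/3}_x$ and $L^5_{t,x}$, all in $L^2_\omega$).

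\textbf{Bound \fref{bd:Q1}.} I split $Q_1(Z,V)=2\,\textrm{Re}\,\mathbb E(\overline{W_V(Y)}Z)+2\,\textrm{Re}\,\mathbb E(\bar Y W_V(Z))$. For the first term I proceed as for \fref{bd:quadembed}, with $W_V(Y)$ playing the role of $Z'$; the new ingredient is Proposition \ref{lem:estonWUY}, which supplies both $\|W_V(Y)\|_{L^5_{t,x}L^2_\omega}\lesssim \|V\|_{L^2_tH^{1/2}_x}$ for the $L^{5/2}$ estimate and $\|\langle\nabla\rangle^{1/2}W_V(Y)\|_{L^5_{t,x}L^2_\omega}\lesssim \|V\|_{L^2_tH^{1/2}_x}$ to feed the Leibniz bound with exponent pair $(1/5,3/10)$ (in both space and time) matched against $\langle\nabla\rangle^{1/2}Z\in L^{10/3}_{t,x}L^2_\omega$ from $\Theta_Z$. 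For the second term I use Cauchy--Schwarz in $\omega$ together with the constant pointwise bound $\|Y(t,x,\cdot)\|_{L^2_\omega}=\|f\|_{L^2}$, reducing everything to $L^p$-norms of $W_V(Z)$. The embedding from Lemma \ref{prop:QuadCl2} gives $\|W_V(Z)\|_{\Theta_Z}\lesssim \|V\|_{\Theta_V}\|Z\|_{\Theta_Z}$, which together with Sobolev $W^{1/2,10/3}\hookrightarrow L^{15/2}$ and interpolation with the $L^\infty_tH^{1/2}_xL^2_\omega$ bound yields the required control of $W_V(Z)$ in $L^{5/2}_{t,x}L^2_\omega$ and $L^2_tH^{1/2}_xL^2_\omega$.

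\textbf{Main obstacle.} The delicate point is the $L^2_tH^{1/2}_x$ estimate for $\mathbb E(\bar Y W_V(Z))$, because $Y$ itself is not in $\Theta_Z$: it lies only in $L^\infty_{t,x}L^2_\omega$ with no spatial decay. Consequently the Leibniz rule cannot place any fractional derivative on $Y$, and all of the $\langle\nabla\rangle^{1/2}$ must fall onto $W_V(Z)$. This forces me to go back to the Duhamel representation of $W_V(Z)$ and apply the dual Strichartz inequality \fref{bd:strichartzdual} to $\langle\nabla\rangle^{1/2}(VZ)$, then the fractional Leibniz \fref{bd:fractionalleibniz2} to split the half-derivative between $V\in\Theta_V$ and $Z\in\Theta_Z$. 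The balance of exponents is tight and depends crucially on both the $L^{5/2}_{t,x}$ and the $L^2_tH^{1/2}_x$ components being present in $\Theta_V$; without either of the two pieces the bound would fail, which is precisely why $\Theta_V$ is defined as their intersection.
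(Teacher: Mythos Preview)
Your treatment of \fref{bd:quadembed} and of the first half of $Q_1$ is essentially the paper's argument (the paper uses the scalar Leibniz rule \fref{bd:fractionalleibniz1} inside $L^1_\omega$ rather than the vector-valued \fref{bd:fractionalleibniz2}; note that for a product of two $L^2_\omega$ random fields the vector Leibniz as stated does not literally apply, since $ZZ'\in L^1_\omega$ only --- but this is easily fixed by working in $L^1_\omega$ as the paper does).

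The real gap is in your handling of $\mathbb E(\bar Y W_V(Z))$. Your plan is to reduce to control of $W_V(Z)$ in $L^{5/2}_{t,x}L^2_\omega$ and $L^2_tH^{1/2}_xL^2_\omega$ via Cauchy--Schwarz in $\omega$ against $\|Y\|_{L^\infty_{t,x}L^2_\omega}$. But neither of these norms is reachable from $\Theta_Z$: the time exponents available in $\Theta_Z$ are $p\in\{\infty,10/3,5\}$, giving by interpolation only $\tfrac1p\in[0,\tfrac3{10}]$, whereas $L^{5/2}_t$ and $L^2_t$ require $\tfrac1p=\tfrac25$ and $\tfrac12$ respectively. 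Going back to the Duhamel formula for $W_V(Z)$ and invoking the dual Strichartz estimate \fref{bd:strichartzdual} does not help, because $(2,2)$ is not an admissible Strichartz output pair in dimension~$3$; Strichartz can only land you back in $L^p_t$ with $p\ge 10/3$.

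The paper resolves this by a \emph{duality} argument that exploits the equilibrium structure of $Y$ rather than treating it as a passive $L^\infty$ weight. Pairing with a test function $U\in L^2_{t,x}$ and unwinding Duhamel gives
\[
\langle U,\mathbb E(\bar Y W_V(Z))\rangle_{t,x}
=\Big\langle \int_\tau^\infty S(\tau-t)[Y(t)U(t)]\,dt,\;(w*V)Z\Big\rangle_{\tau,x,\omega},
\]
and now the key linear bound \fref{bd:WVY} (the spreading effect you invoked for the first term) yields $\|\int S(\tau-t)[YU]dt\|_{L^5_{\tau,x}L^2_\omega}\lesssim\|U\|_{L^2_{t,x}}$. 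H\"older with $\tfrac15+\tfrac12+\tfrac3{10}=1$ against $V\in L^2_{t,x}$ and $Z\in L^{10/3}_{t,x}L^2_\omega$ closes the $L^2_{t,x}$ estimate; the $H^{1/2}$ version follows by the same duality at integer regularity and complex interpolation. The $L^{5/2}_{t,x}$ bound is then obtained by interpolating this $L^2_{t,x}$ control with the easy $L^{10/3}_{t,x}$ bound. In short, your ``Main obstacle'' paragraph identifies the right difficulty but proposes the wrong fix: the missing integrability in time comes from the dispersive gain of $Y$ through duality, not from Strichartz applied to $W_V(Z)$.
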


\begin{proof} 
For the first bound, as a consequence of H\"older's and Minkowski inequalities:
$$
\| \mathbb E (ZZ')\|_{L^{5/2}_{t,x}}\lesssim \| Z\|_{L^5_{t,x}L^2_\omega}\| Z'\|_{L^5_{t,x}L^2_\omega}\lesssim \| Z\|_{L^2_\omega L^5_{t,x}}\| Z'\|_{L^2_\omega L^5_{t,x}} \lesssim \| Z\|_{\Theta_Z}\| Z'\|_{\Theta_Z}.
$$
Next, using Minkowski inequality, then the fractional Leibniz rule \fref{bd:fractionalleibniz1} and then Cauchy-Schwarz:
\bee
&& \| \langle \nabla \rangle^{1/2} \mathbb E (ZZ')\|_{L^{2}_{t,x}} \lesssim  \| \langle \nabla \rangle^{1/2} (ZZ')\|_{L^{2}_{t,x}L^1_\omega}\lesssim \| \langle \nabla \rangle^{1/2} (ZZ')\|_{L^1_\omega L^{2}_{t,x}} \\
&&\quad \quad  \lesssim  \| \| \langle \nabla \rangle^{1/2} Z \|_{L^{10/3}_{t,x}} \| Z' \|_{L^{5}_{t,x}}\|_{L^1_\omega}+\| \| \langle \nabla \rangle^{1/2} Z' \|_{L^{10/3}_{t,x}} \| Z \|_{L^{5}_{t,x}}\|_{L^1_\omega} \lesssim \| Z \|_{\Theta_Z} \| Z'\|_{\Theta_Z}
\eee
The two above bounds give \fref{bd:quadembed}. To show the second bound of the Lemma, we recall that
\be \label{def:Q1inter}
Q_1(Z,V)= 2\Re \E( \overline{W_V(Y)} Z + \bar Y W_V(Z)).
\ee
For the first term, by Cauchy-Schwarz, H\"older, and Minkowski:
$$
\| \E (\overline{W_V(Y)} Z)  \|_{L^2_{t,x}} \leq \|W_V(Y)\|_{L^5_{t,x} L^2_\omega} \|Z\|_{L^{10/3}_{t,x} L^2_\omega},
$$
$$
\| \nabla \E (\overline{W_V(Y)} Z)  \|_{L^2_{t,x}} \leq \| \nabla W_V(Y)\|_{L^5_{t,x} L^2_\omega} \|Z\|_{L^{10/3}_{t,x} L^2_\omega} + \| W_V(Y)\|_{L^5_{t,x} L^2_\omega} \| \nabla Z\|_{L^{10/3}_{t,x} L^2_\omega},
$$
so that $\| \langle \nabla \rangle \E (\overline{W_V(Y)} Z)\|_{L^2_{t,x}}\lesssim \| \langle \nabla \rangle W_V(Y) \|_{L^5_{t,x} L^2_\omega}\| \langle \nabla \rangle Z \|_{L^{10/3}_{t,x}  L^2_\omega}$ from \fref{id:vectorvaluedbesselpotential}. Complex interpolation between this bound and the one with no derivatives gives:
\be \label{bd:Q1inter1}
\| \E (\overline{W_V(Y)} Z)  \|_{L^2_{t}H^{1/2}_x} \lesssim \| \langle \nabla \rangle^{\frac 12} W_V(Y) \|_{L^5_{t,x} L^2_\omega}\| \langle \nabla \rangle^{\frac 12} Z \|_{L^{10/3}_{t,x} L^2_\omega} \lesssim \|V\|_{L^2_tH^{1/2}_x} \| Z \|_{ L^2_\omega L^{10/3}_{t,x}}\lesssim \| V \|_{\Theta_V}\| Z\|_{\Theta_Z},
\ee
where we used Minkowski and \fref{bd:WVY}. On the other hand, using H\"older, Minkowski and \fref{bd:WVY} gives:
\be \label{bd:Q1inter2}
\| \E (\overline{W_V(Y)} Z)  \|_{L^{5/2}_{t,x}} \leq \|W_V(Y)\|_{L^5_{t,x} L^2_\omega} \|Z\|_{L^{5}_{t,x} L^2_\omega}\lesssim \| V \|_{L^2_tH^{1/2}_x}\| Z \|_{L^2_\omega L^{5}_{t,x}} \lesssim \| V \|_{\Theta_V}\| Z \|_{\Theta_Z}.
\ee
For the second term we start with the following bound using Cauchy-Schwarz and \fref{bd:quadperturbation}:
\be \label{bd:L103inter}
\|  \E(\bar Y W_V(Z))\|_{L^{10/3}_{t,x}} \lesssim \|Y\|_{L^\infty_{t,x} L^2_\omega} \|W_V(Z)\|_{L^{10/3}_{t,x} L^2_\omega}\lesssim \| W_V(Z)\|_{\Theta_{Z}}\lesssim \| V\|_{\Theta_{V}}\| Z\|_{\Theta_{Z}}.
\ee
We next prove that
\be \label{bd:L2dualityinter}
\| \E(\bar Y W_V(Z))\|_{L^2_t H^{1/2}_{x}} \lesssim  \|V\|_{\Theta_V} \|Z\|_{\Theta_Z} 
\ee
by duality. Let $U \in L^2_{t,x}$, we have 
\bee
\an{U, \E(\bar Y W_V(Z))}_{t,x} &=& \an{YU,W_V(Z)}_{t,x,\omega}=\an{YU,-i\int_0^t S(t-s)((w*V(s))Z(s))ds}_{t,x,\omega}\\
&&= \an{\int_{\tau}^\infty S(\tau - t)[Y(t)U(t)]dt,(w*V) Z}_{t,x,\omega}.
\eee
By H\"older's inequality since $\frac 15+\frac 12+\frac{3}{10}=1$ and \fref{bd:WVY}, we get
\bee
|\an{U, \E(\bar Y W_V(Z))}_{t,x}| &\lesssim& \big\|\int_{\tau}^\infty S(\tau - t)[Y(t)U(t)]dt\big\|_{L^5_{t,x} L^2_\omega}\|w*V\|_{L^2_{t,x}} \|Z\|_{L^{10/3}_{t,x} L^2_\omega}\\
&&\lesssim C\|U\|_{L^2_{t,x}}\|V\|_{L^2_{t,x}} \|Z\|_{L^{10/3}_{t,x} L^2_\omega}.
\eee
We differentiate and apply again H\"older with $\frac 15+\frac 12+\frac{3}{10}=1$ and \fref{bd:WVY} to find:
\bee
&& \left|\an{U, \nabla \E(\bar Y W_V(Z))}_{t,x}\right| =\left| \an{U,  \E(\nabla \bar Y W_V(Z)+\bar Y W_{\nabla V}(Z)+\bar Y W_{V}(\nabla Z))}_{t,x}\right| \\
&=&\left| \an{\nabla YU,\int_0^t S(t-s)((w*V)Z)ds}_{t,x,\omega}+\an{YU,\int_0^t S(t-s)((w*\nabla V)Z+(w*V)\nabla Z)ds}_{t,x,\omega}\right|\\
&\lesssim & C\|U\|_{L^2_{t,x}}\| \langle \nabla \rangle V\|_{L^2_{t,x}} \|\langle \nabla \rangle Z\|_{L^{10/3}_{t,x} L^2_\omega}.
\eee
The two bounds above show $\| \langle \nabla \rangle \E(\bar Y W_V(Z))\|_{L^2_t}\lesssim \| \langle \nabla \rangle V\|_{L^2_{t,x}} \|\langle \nabla \rangle Z\|_{L^{10/3}_{t,x} L^2_\omega}$. Applying complex interpolation with the bound with zero derivatives shows:
$$
\|  \langle \nabla \rangle^{\frac 12}\E(\bar Y W_V(Z))\|_{L^2_{t,x}}\lesssim \| \langle \nabla \rangle^{\frac 12}V\|_{L^2_{t,x}} \| \langle \nabla \rangle^{\frac 12} Z\|_{L^{10/3}_{t,x} L^2_\omega}\lesssim \| V \|_{\Theta_V}\| Z \|_{\Theta_Z}.
$$
The above inequality shows \fref{bd:L2dualityinter}. Since $\frac52 \in [2,\frac{10}{3}]$, we get from \fref{bd:L2dualityinter}, \fref{bd:L103inter} and interpolation:
$$
\| \E(\bar Y W_V(Z))\|_{\Theta_V}=\| \E(\bar Y W_V(Z))\|_{L^2_{t,x}}+\| \E(\bar Y W_V(Z))\|_{L^{5/2}_{t,x}} \lesssim \|V\|_{\Theta_V} \|Z\|_{\Theta_Z}.
$$
We inject the above bound, \fref{bd:Q1inter1} and \fref{bd:Q1inter2} in the identity \fref{def:Q1inter}, showing the desired bound \fref{bd:Q1}.

\end{proof}

\section{Proof of Corollary \ref{corollaire}} \label{sec:coro}

This section is devoted to the proof of Corollary \ref{corollaire}. The following preliminary estimate is a consequence of \cite{FS} Theorem 8 and of the dual argument explained before \cite{FLLS} Theorem 2:
\be \label{bd:strichartzpotential}
\| \int_0^\infty S(-s)V(s)S(s)ds \|_{\mathfrak S^{2q'}(L^2(\mathbb R^d))}\lesssim \| V \|_{L^{p'}_tL^{q'}(\mathbb R^d)} \quad \mbox{for any } \frac{1+d}{2}<q'\leq \infty \mbox{ and } \frac{2}{p'}+\frac{d}{q'}=2.
\ee
We shall use several times that Sobolev-Schatten spaces form an increasing sequence:
\be \label{bd:schattenincreasing}
\| \gamma \|_{\mathfrak S^{s,p}}\leq \| \gamma \|_{\mathfrak S^{s',p'}} \quad \mbox{whenever } s\leq s' \mbox{ and } p\geq p',
\ee
since $\ell ^{p'} \hookrightarrow \ell ^p$ when $p \geq p'$; 
and H\"older inequality in Schatten spaces for $\frac{1}{p}+\frac{1}{q}=\frac{1}{r}$:
\be \label{bd:holderschatten}
\| \gamma \circ \gamma' \|_{\mathfrak S^{r}}\leq \| \gamma \|_{\mathfrak S^p} \| \gamma' \|_{\mathfrak S^{q}}.
\ee

\begin{proof}[Proof of Corollary \ref{corollaire}]

We only treat the case $t\rightarrow +\infty$ without loss of generality. We fix $2<q'\leq 5/2$, $p'$ satisfying \fref{bd:strichartzpotential} and introduce the space (equipped with the usual norm for sums of Banach spaces):
\be \label{id:defE}
E=L^2_tL^2_x\cap L^{\infty}_tL^2_x+ L^2_tL^2_x\cap L^{p'}_tL^{q'}_x.
\ee

\textbf{Step 1} \emph{Preliminary bound}. We claim that there holds:
\be \label{bd:VinE}
V\in E.
\ee
and now show this bound. We rewrite the fixed point equation \fref{id:fixedpointmainth} for $V$:
$$
V=(\text{Id}-L_2)^{-1}\tilde V, \quad \tilde V=2\textrm{Re }\E(\bar Y S(t) Z_0)+\E(|Z|^2) +2  \textrm{Re } \E(\overline{W_V(Y)} Z + \bar Y W_V(Z)) +  Q_2(V),
$$
where $L_2$ and $Q_2$ are defined in \fref{def:Q2} and \fref{def:L2}. For the first, fourth and fifth terms, using 
\[
\|S(t)Z_0\|_{L^2_\omega L^2_x}\leq \|Z_0\|_{L^2_\omega L^2_x}
\]
 and $\| Y\|_{L^{\infty}_xL^2_\omega}<\infty$, Cauchy-Schwarz, \fref{bd:quadperturbation} and \fref{bd:Q2} we obtain:
$$
2\textrm{Re }\E(\bar Y S(t) Z_0)+2 \textrm{Re } \E (\bar Y W_V(Z)) +  Q_2(V)\in L^{\infty}_tL^2_x.
$$
Next, recall $Z,W_V(Y)\in L^{\infty}_tL^3_xL^2_\omega\cap L^{5}_tL^5_xL^2_\omega$ from \fref{id:ThetaZ} and the embedding of $H^{1/2}(\mathbb R^3)$ into $L^{3}(\mathbb R^3)$, and Proposition \ref{lem:estonWUY}. Hence by H\"older $|Z|^2,\overline{W_V(Y)} Z \in L^{\infty}_tL^{3/2}_xL^1_\omega\cap L^{5/2}_tL^{5/2}_xL^1_\omega$. Notice $(\infty,3/2)$ and $(5/2,5/2)$ both satisfy the condition in \fref{bd:strichartzpotential}. Hence by Cauchy-Schwarz and  interpolation:
$$
\E(|Z|^2) +2  \textrm{Re } \E(\overline{W_V(Y)} Z ) \in L^{p'}_tL^{q'}_x.
$$
We also recall that all terms in the definition of $\tilde V$ belong to $L^2_tL^2_x$ from \fref{bd:improvedrandomtopotential}, \fref{bd:quadembed}, \fref{bd:Q1inter1}, \fref{bd:L2dualityinter} and \fref{bd:Q2}. From this and the two bounds above we get the first estimate:
$$
\tilde V \in L^2_tL^2_x\cap L^{\infty}_tL^2_x+ L^2_tL^2_x\cap L^{p'}_tL^{q'}_x.
$$
Let us now prove that $(\text{Id}-L_2)^{-1}-\text{Id}$ is continuous from $L^2_tL^2_x$ onto $L^{\infty}_tL^{2}_x$. Recall that $L_2$ is a space-time Fourier multiplier of symbol $m_f$ defined in \fref{def:mf}. Hence $(\text{Id}-L_2)^{-1}-\text{Id}$ has space-time Fourier symbol $m_f/(1-m_f)$. Recall that there exists $c>0$ such that $c\leq |1-m_f|\leq c^{-1}$ from Proposition \ref{pr:bdL2}. Moreover, from \fref{def:mf} and Parseval for any $\xi\neq 0$:
$$
\| m_f(\cdot,\xi)\|_{L^{2}_\omega}^2\lesssim \int_{\mathbb R} |\sin(|\xi|^2t)|^2 |h(2\xi t)1_{t\geq 0}|^2dt \lesssim \int_0^\infty rh^2(r),
$$
using $|\sin(|\xi|^2t)|^2\leq |\xi|^2|t|$, the radiality of $h$ and performing the change of variables $r=2|\xi| t$. Indeed, we have, by Minkowski's inequality,
\begin{multline*}
 \|((\text{Id}-L_2)^{-1}-\text{Id})\tilde V\|_{L^\infty_t L^2_x} =\| \mathcal F_x[((\text{Id}-L_2)^{-1}-\text{Id})\tilde V]\|_{L^\infty_t L^2_\xi}\\
 \leq \| \mathcal F_x[((\text{Id}-L_2)^{-1}-\text{Id})\tilde V]\|_{L^2_\xi L^\infty_t}\\
 \leq  \| \mathcal F_{t,x}[((\text{Id}-L_2)^{-1}-\text{Id})\tilde V]\|_{L^2_\xi L^1_\omega}= \|\frac{m_f}{1-m_f}\mathcal F_{t,x}\tilde V\|_{L^2_\xi,L^1_\omega}\\
 \lesssim \| m_f\|_{L^{\infty}_\xi L^{2}_\omega}\| \mathcal F_{t,x}\tilde V\|_{L^2_\xi,L^2_\omega}\lesssim \| \tilde V \|_{L^2_{t,x}}.
\end{multline*}
Therefore, writing $V=\tilde V+((\text{Id}-L_2)^{-1}-\text{Id})\tilde V$, we see that we proved above that $\tilde V$ belongs to $L^2_tL^2_x\cap L^{\infty}_tL^2_x+ L^2_tL^2_x\cap L^{p'}_tL^{q'}_x$, while $((\text{Id}-L_2)^{-1}-\text{Id})\tilde V$ belongs to $L^{\infty}_tL^{2}_x\cap L^2_tL^2_x$. This proves the claim.\\

\textbf{Step 2} \emph{Bounds for $W_{V,\pm}$}. We decompose between high and low frequencies:
$$
\mbox{for }\eta^*>0, \quad P_{\leq \eta^*}u=\mathcal F^{-1} (\chi (\frac{\eta}{\eta^*})\hat u), \quad P_{\geq \eta^*}=1-P_{\leq \eta^*}, \quad \mbox{and}\quad V_{\leq 1}=P_{\leq 1}V, \quad \quad V_{\geq 1}=P_{\geq 1}V,
$$
where $\chi$ is smooth with $\chi(\xi)\equiv 1$ for $|\xi|\leq 1$ and $\chi(\xi)\equiv 0$ for $|\xi|\geq 2$ and claim that:
\be \label{bd:WVlocalised}
\| \langle \nabla \rangle^{\frac 12} W_{V\leq 1,+}\langle \nabla \rangle^{-\frac 12}\|_{\mathfrak S^{2q'}}\lesssim \| V \|_{E}\quad \mbox{and} \quad \| \langle \nabla \rangle^{\frac 12} W_{V\geq 1,+} \gamma_f  \langle \nabla \rangle^{\frac 12}\|_{\mathfrak S^{2}}\lesssim \| V \|_{L^2_tL^2_x}.
\ee
We now prove the above bounds. For the first one, need a Leibniz type formula for fractional differentiation. We decompose:
\be \label{id:decompositionWVleq1}
\langle \nabla \rangle^{\frac 12} W_{V\leq 1,+}\langle \nabla \rangle^{-\frac 12}=\langle \nabla \rangle^{\frac 12} W_{V\leq 1,+}\langle \nabla \rangle^{-\frac 12}P_{\leq \eta^*}+\langle \nabla \rangle^{\frac 12} W_{V\leq 1,+}\langle \nabla \rangle^{-\frac 12}P_{\geq \eta^*}.
\ee
Since $V_{\leq 1}$ is localised in frequencies $|\eta|\leq 2$, since $p',q'>2$, we have from Bernstein's inequality at $t$ fixed,
$$
\|V_{\leq 1}(t,\cdot)\|_{L^{q'}_x} \lesssim \|V_{\leq 1}(t,\cdot)\|_{L^2_x}
$$ 
and since $p' \in [1,\infty]$, $\| V \|_{L^{p'}_tL^{q'}_x}\lesssim \| V \|_{L^{2}_tL^{2}_x}+\| V \|_{L^{\infty}_tL^{2}_x}$. Therefore from \fref{id:defE}:
\be \label{bd:Vlocalisedstrichartz}
\| V_{\leq 1}\|_{L^{p'}_tL^{q'}_x}\lesssim \| V \|_{E}.
\ee
From this, \fref{bd:strichartzpotential} and \fref{bd:holderschatten}, since $V_{\leq 1}$ is localised in frequencies $|\eta|\leq 2$, for any $u$ localised in frequencies $|\eta| \leq 2\eta^*$, their product is localised in frequencies $|\eta| \leq 2\eta^* + 2$, therefore
$$
\an{\nabla}^{\frac12} W_{V_{\leq 1},+}\an{\nabla}^{-\frac12} P_{\leq \eta^*} = P_{\leq 2\eta^*+2} \an{\nabla}^{\frac12} W_{V_{\leq 1},+}\an{\nabla}^{-\frac12} P_{\leq \eta^*}.
$$
We get for the first term, using H\"older's inequality for Schatten spaces \eqref{bd:holderschatten}:
\bea 
\nonumber && \| \langle \nabla \rangle^{\frac 12}W_{V_{\leq 1},+} \langle \nabla \rangle^{-\frac 12} P_{\leq \eta^*}\|_{\mathfrak S^{2q'}} 
 \leq \| P_{\leq 2\eta^* +2}\langle \nabla \rangle^{\frac 12}\|_{C(L^2(\mathbb R^3))} \cdot \| W_{V_{\leq 1},+} \|_{\mathfrak S^{2q'}} \cdot \| \langle \nabla \rangle^{-\frac 12} P_{\leq \eta^*}\|_{C(L^2(\mathbb R^3))}\\
 \label{bc:interWV2} && \lesssim \an{2 \eta^* + 2}^{\frac12} \cdot \| V \|_{L^{p'}_tL^{q'}_x}\cdot 1 \lesssim \| V\|_{E}
\eea
where $ \|\cdot \|{C(L^2(\mathbb R^3))}$ is the operator norm on $L^2$ which corresponds to $\mathfrak S^\infty$. For the second part, since $V_{\leq 1}$ is localised in frequencies $|\eta|\leq 2$, if $u$ is localised in frequencies $|\eta|\geq \eta^*$ their product is localised in frequencies $|\eta|\geq \eta^* -2$. Hence, we have 
$$
 \langle \nabla \rangle^{\frac 12}W_{V_{\leq 1},+} \langle \nabla \rangle^{-\frac 12} P_{\geq \eta^*} =  \langle \nabla \rangle^{\frac 12}P_{\geq \eta^* -2}W_{V_{\leq 1},+} \langle \nabla \rangle^{-\frac 12} P_{\geq \eta^*} .
$$
Taking $\eta^* > 2$, using the commutativity of Fourier multipliers, we get:
$$
 \langle \nabla \rangle^{\frac 12}W_{V_{\leq 1},+} \langle \nabla \rangle^{-\frac 12} P_{\geq \eta^*} =  \langle \nabla \rangle^{\frac 12} |\nabla|^{-\frac12}P_{\geq \eta^* -2} \quad |\nabla|^{\frac12} W_{V_{\leq 1},+} |\nabla|^{-\frac12}\quad \langle \nabla \rangle^{-\frac 12}|\nabla|^{1/2} P_{\geq \eta^*} .
$$
Using that both $ \langle \nabla \rangle^{\frac 12} |\nabla|^{-\frac12}P_{\geq \eta^* -2} $ and $\langle \nabla \rangle^{-\frac 12}|\nabla|^{1/2} P_{\geq \eta^*}$ belong to $C(L^2_x) = \mathfrak S^\infty$, we get by H\"older's inequality \fref{bd:holderschatten}
$$
\| \langle \nabla \rangle^{\frac 12}W_{V_{\leq 1},+} \langle \nabla \rangle^{-\frac 12} P_{\geq \eta^*} \|_{\mathfrak S^{2q'}}\lesssim \| |\nabla |^{\frac 12}W_{V_{\leq 1},+}P_{\geq \eta^*} |\nabla |^{-\frac 12} \|_{\mathfrak S^{2q'}}.
$$
We then prove a Leibniz type inequality. Assume $|\xi-\eta|\leq 2$ and $|\eta|\geq \eta^*$, then the entire series expansion
\bee
&&|\xi|^{\frac 12}=|\eta|^{\frac 12}\left(1+2\frac{\eta_1(\xi_1-\eta_1)+\eta_2(\xi_2-\eta_2)+\eta_3(\xi_3-\eta_3)}{|\eta|^2}+\frac{|\xi-\eta|^2}{|\eta|^2}\right)^{\frac 14}\\
&&\quad \quad =|\eta|^{\frac 12}\sum_{k,l,m,n \in \mathbb N}a_{k,l,m,n} \frac{\eta^{k,l,m}}{|\eta|^{2(k+l+m)}}(\xi-\eta)^{k,l,m} \frac{|\xi-\eta|^{2n}}{|\eta|^{2n}}\quad \quad \mbox{where}\quad \eta^{k,l,m}=\eta_1^k\eta_2^l\eta_3^m,
\eee
holds for $\eta^*$ large enough. The coefficients $a_{k,l,m,n}$ are given by
$$
a_{k,l,m,n} = \prod_{j=0}^{k+l+n+m-1} \Big( \frac14 -j\Big) \frac1{k!l!m!n!} 2^{k+l+m}
$$
and thus satisfy $|a_{k,l,m,n}|\leq 7^{k+l+m+n}$. We thus decompose:
\bee
\mathcal F\left( |\nabla|^{\frac 12} V_{\leq 1}(s)P_{\geq \eta^*}u \right) & = & \int_{\mathbb R^3}d\eta |\xi|^{\frac 12} \hat V_{\leq 1}(s,\xi-\eta)\hat u(\eta) \left(1-\chi\left(\frac{\eta}{\eta^*}\right)\right) \\
&=& \sum_{k,l,m,n\in \mathbb N} a_{k,l,m,n} \mathcal F\left( V^{k,l,m,n}_{\leq 1}(s) P_{\geq \eta^*} \tilde \nabla^{k,l,m,n} |\nabla|^{\frac 12}u \right) 
\eee
where
$$
\widehat{V^{k,l,m,n}_{\leq 1}}(s,\xi-\eta)=(\xi-\eta)^{k,l,m}|\xi-\eta|^{2n}\hat V_{\leq 1}(s,\xi-\eta), \quad \quad \widehat{\tilde \nabla^{k,l,m,n}u}(\eta)=\frac{\eta^{k,l,m}}{|\eta|^{2(k+l+m+n)}}\hat u(\eta).
$$
This gives the identity:
$$
|\nabla |^{\frac 12}W_{V_{\leq 1},+}P_{\geq \eta^*} |\nabla |^{-\frac 12} =\sum_{k,l,m,n\in \mathbb N}a_{k,l,m,n} W_{V^{k,l,m,n}_{\leq 1},+}P_{\geq \eta^*} \tilde \nabla^{k,l,m,n}.
$$
Above, using \fref{bd:strichartzpotential}, \fref{bd:Vlocalisedstrichartz}, the localisation of $V_{\leq 1}$ at frequencies $\leq 2$ and \fref{bd:holderschatten}:
\bea
\nonumber && \| |\nabla |^{\frac 12}W_{V_{\leq 1},+}P_{\geq \eta^*} |\nabla |^{-\frac 12}\|_{\mathfrak S^{2q'}}\leq \sum_{k,l,m,n\in \mathbb N} |a_{k,l,m,n}| \| W_{V^{k,l,m,n}_{\leq 1},+}\|_{\mathfrak S^{2q'}} \| P_{\geq \eta^*} \tilde \nabla^{k,l,m,n}\|_{C(L^2(\mathbb R^3))}\\
\label{bc:interWV3} && \quad \lesssim \sum_{k,l,m,n\in \mathbb N} 7^{k+l+m+n} 2^{k+l+m+2n}\| V_{\leq 1} \|_{L^{p'}_tL^{q'}_x} \frac{1}{\eta^{*k+l+m+2n}} \lesssim \| V_{\leq 1} \|_{L^{p'}_tL^{q'}_x}\lesssim \| V \|_{E} 
\eea
for $\eta^*>0$ large enough. We inject the bounds \fref{bc:interWV2} and \fref{bc:interWV3} in \fref{id:decompositionWVleq1}, which proves the first inequality in \fref{bd:WVlocalised}. We now turn to the second one. Recalling $|Y_0\rangle \langle Y_0|=\gamma_f$ one gets:
\bea
\nonumber \mathbb E (|W_{V,+}(Y_0)\rangle \langle Y_0)|) &=& \mathbb E(|-i\int_0^\infty S(-s)V(s)S(s)Y_0\rangle \langle Y_0|)\\
\label{id:WV+Y0}&& \quad \quad =-i\int_0^\infty S(-s)V(s)S(s) \circ  \mathbb E (|Y_0\rangle \langle Y_0| )=-i W_{V,+} \circ \gamma_f.
\eea
and hence:
\be \label{id:decompositionWVgeq1}
\langle \nabla \rangle^{\frac 12}  W_{V\geq 1,+}\gamma_f\langle \nabla \rangle^{\frac 12}=\mathbb E(|\langle \nabla \rangle^{\frac 12} W_{V\geq 1,+}(Y_0) \rangle \langle \langle \nabla \rangle^{\frac 12}  Y_0|).
\ee
From \fref{bd:potentialtorandom}, the identity $\dot{\tilde H}_{x,\omega}^0=L^2_\omega L^2_x$ and the localisation of $V_{\geq 1}$ at frequencies $\geq 1$ we obtain:
$$
\| W_{V\geq 1,+}(Y_0)\|_{L^2_\omega L^2_x}\lesssim \| V_{\geq 1}\|_{L^2_t\dot H^{-\frac 12}}\lesssim \| V_{\geq 1}\|_{L^2_tL^2_x}\lesssim \| V\|_{L^2_tL^2_x}.
$$
Similarly, from the bounds \fref{bd:continuityTT*3} and \fref{bd:potentialtorandom} with $s=0$:
\begin{multline*}
\| \nabla (W_{V\geq 1,+}(Y_0))\|_{L^2_\omega L^2_x}\leq \| W_{V\geq 1,+}(\nabla  Y_0)+ W_{\nabla V\geq 1,+}(Y_0)\|_{L^2_\omega L^2_x}\lesssim \| V_{\geq 1}\|_{L^2_t\dot H^{-1/2}_x}+\| \nabla V_{\geq 1}\|_{L^2_t\dot H^{-1/2}_x}\\
\lesssim \| V\|_{L^2_tH^{1/2}_x}.
\end{multline*}
Hence:
\be \label{bd:potentialtorandomrefined}
\| W_{V\geq 1,+}(Y_0)\|_{L^2_\omega H^{1}_x}\lesssim \| V\|_{L^2_tH^{1/2}_x}.
\ee
From \cite{CodS} Appendix A, the following pointwise bound is proved for $f>0$ is bounded, radial:
\be \label{bd:pointwisefweighted}
\| f^2(\xi) \langle \xi \rangle \|_{L^{\infty}}<\infty, \quad \mbox{ if } \pa_r f<0 \mbox{ and } \int_{\mathbb R^3} f^2\langle \xi \rangle^{1}d\xi<\infty,
\ee
and it is proved that we have for any $u\in L^2(\mathbb R^3)$, that $\langle Y_0, u \rangle$ is a centred Gaussian variable with:
\be \label{id:gaussiandual}
\langle Y_0, u \rangle=\int_{\mathbb R^3} f(\xi)\hat u(\xi)d\overline{W(\xi)}, \quad \quad  \mathbb E \left( |\langle Y_0, u \rangle|^2 \right)=\int_{\mathbb R^3} f^2(\xi)|\hat u(\xi)|^2\lesssim \| \langle \nabla \rangle^{-\frac 12} u \|_{L^2}.
\ee
We recall that for an operator $\gamma$ with kernel $k_\gamma (x,y)$ there holds $\| \gamma\|_{\mathfrak S^{2}}=\| k_\gamma \|_{L^2(\mathbb R^3\times \mathbb R^3)}$, and that the integral kernel of $\mathbb E(|a \rangle \langle b|) $ is $\mathbb E (a(x)\overline{b(y)})$. Therefore, by duality, we get that
\bee
&&\| \mathbb E(|\langle \nabla \rangle^{\frac 12} W_{V\geq 1,+}(Y_0) \rangle \langle \langle \nabla \rangle^{\frac 12}  Y_0|)  \|_{\mathfrak S^{2}}^2 = \|\mathbb E (\langle \nabla \rangle^{\frac 12}W_{V\geq 1,+}Y_0(x)  \langle \nabla\rangle^{\frac 12} \bar Y_0(y))\|_{L^2_{x,y}}\\
& &\qquad \qquad \qquad \qquad \qquad \qquad = \sup_{\| u\|_{L^2(\mathbb R^{6})}=1} \left| \int_{\mathbb R^d\times \mathbb R^d} \mathbb E(\langle \nabla \rangle^{\frac 12}W_{V\geq 1,+}Y_0(x)  \langle \nabla\rangle^{\frac 12} \bar Y_0(y) u(x,y)dxdy\right|.
\eee
Using Fubini and Cauchy-Schwarz, then \fref{id:gaussiandual}, and finally \fref{bd:potentialtorandomrefined} this is
\bee
...& \leq&  \sup_{\| u\|_{L^2(\mathbb R^{2d})}=1}\int_{\mathbb R^d} \left(\mathbb E(|\langle \nabla \rangle^{\frac 12}W_{V\geq 1,+}Y_0(x)|^2)\right)^{\frac 12} \left(\mathbb E\left(| \int_{\mathbb R^d} \langle \nabla \rangle^{\frac 12} \bar Y_0(y) u(x,y)dy|^2\right)\right)^{\frac 12}dx \\
&& \quad \lesssim \sup_{\| u\|_{L^2(\mathbb R^{2d})}=1}\int_{\mathbb R^d} \left(\mathbb E(|\langle \nabla \rangle^{\frac 12}W_{V\geq 1,+}Y_0(x)|^2)\right)^{\frac 12} \left(\int_{\mathbb R^d}|u(x,y)|^2dy \right)^{\frac 12}dx \\
&& \quad \quad \lesssim \| \langle \nabla \rangle^{\frac 12}W_{V\geq 1,+}Y_0\|_{L^2_xL^2_\omega} \lesssim \| V\|_{L^2_tH^{1/2}_x}.
\eee
The above bound, via the identity \fref{id:decompositionWVgeq1}, proves the second bound in \fref{bd:WVlocalised}.\\

\noindent \textbf{Step 3} \emph{Proof that $\gamma_+\in \mathfrak S^{\frac 12,2q'}$}. Recall $\gamma_+$ is defined by \fref{def:gammapm}. We write (where $Y_0=Y_f(t=0)$):
$$
\begin{array}{lllll}
&\gamma_+ &= \mathbb E\left( |Z_{+}\rangle \langle Z_+ |+|Z_{+}\rangle \langle Y_0|+|Y_0 \rangle \langle Z_+|\right) & \}&=:\gamma_+^1\\
&& \quad +\mathbb E\left(|W_{V,+}(Y_0)\rangle \langle W_{V,+}(Y_0) | +|W_{V,+}(Y_0)\rangle \langle Y_0|+|Y_0 \rangle \langle W_{V,+}(Y_0)|\right)&  \}&=:\gamma_+^2\\
&&\quad \quad +\mathbb E\left( |W_{V,+}(Y_0)\rangle \langle Z_+ |+ |Z_+ \rangle \langle W_{V,+}(Y_0) |\right) & \}&=:\gamma_+^3
\ea
$$
From \cite{CodS} Appendix A we have the following result: the bound for any $Z,Z' \in L^2_\omega H^{d/2-1}_x$:
\be \label{bd:ZY0operator}
\|\mathbb E(|Z \rangle \langle Y_0|)\|_{\mathfrak S^{\frac{d}{2}-1,2}}+\|\mathbb E(|Y_0 \rangle \langle Z|) \|_{\mathfrak S^{\frac{d}{2}-1,2}}\lesssim \| Z \|_{L^2_\omega H^{\frac 12}_x}, \quad \| \mathbb E(|Z \rangle \langle Z' |)\|_{\mathfrak S^{\frac{d}{2}-1,2}}\lesssim\| Z \|_{L^2_\omega H^{\frac 12}_x}\| Z' \|_{L^2_\omega H^{\frac 12}_x}.
\ee
We claim the following linear and bilinear bounds for $V,V' \in E$ and $Z\in L^2_\omega H^{\frac 12}_x$:
\be \label{bd:WVY0operator}
\| \mathbb E (|W_{V,+}(Y_0)\rangle \langle W_{V',+}(Y_0) |)\|_{\mathfrak S^{\frac 12, 2q'}}\lesssim \| V \|_E \| V' \|_E, \quad \|\E(|W_{V,+}(Y_0)\rangle \langle Y_0|)\|_{\mathfrak S^{\frac 12, 2q'}}\lesssim \| V \|_E,
\ee
\be \label{bd:WVY0Zoperator}
\| \mathbb E (|W_{V,+}(Y_0)\rangle \langle Z |)\|_{\mathfrak S^{\frac 12, 2q'}}\lesssim \| V \|_E \| Z \|_{L^2_\omega H^{\frac 12}_x}.
\ee
We now prove these bounds. We start with the bilinear terms in $V,V'$. For the first one in \fref{bd:WVY0operator} we write:
$$
W_{V,+}\gamma_f W_{V',+} = W_{V_{\leq 1},+}\gamma_f W_{V'_{\leq 1},+}+W_{V_{\leq 1},+}\gamma_f W_{V'_{\geq 1},+}+W_{V_{\geq 1},+}\gamma_f W_{V'_{\leq 1},+}+W_{V_{\geq 1},+}\gamma_f W_{V'_{\geq 1},+}.
$$
When both $V$ and $V'$ are localised in low frequencies, since $q'<2q'$, we bound, via \eqref{bd:schattenincreasing}, \fref{bd:holderschatten}, \fref{bd:WVlocalised} (for the dual operator) and \fref{bd:pointwisefweighted}:
\bee
&& \| \langle \nabla \rangle^{\frac 12} W_{V_{\leq 1},+}\gamma_f W_{V'_{\leq 1},+}\langle \nabla \rangle^{\frac 12} \|_{\mathfrak S^{2q'}} \leq \| \langle \nabla \rangle^{\frac 12} W_{V_{\leq 1},+}\gamma_f W_{V'_{\leq 1},+}\langle \nabla \rangle^{\frac 12} \|_{\mathfrak S^{q'}} \\
&=&\| \langle \nabla \rangle^{\frac 12} W_{V_{\leq 1},+}\langle \nabla \rangle^{-\frac 12}\langle \nabla \rangle^{\frac 12}  \gamma_f\langle \nabla \rangle^{\frac 12} \langle \nabla \rangle^{-\frac 12}  W_{V'_{\leq 1},+}\langle \nabla \rangle^{\frac 12} \|_{\mathfrak S^{q'}}\\
&\leq& \| \langle \nabla \rangle^{\frac 12} W_{V_{\leq 1},+}\langle \nabla \rangle^{-\frac 12}\|_{\mathfrak S^{2q'}} \|\langle \nabla \rangle^{\frac 12}  \gamma_f\langle \nabla \rangle^{\frac 12}\|_{C(L^2(\mathbb R^3))} \| \langle \nabla \rangle^{-\frac 12}  W_{V'_{\leq 1},+}\langle \nabla \rangle^{\frac 12} \|_{\mathfrak S^{2q'}} \lesssim \| V\|_{E}\| V'\|_{E}.
\eee
When either $V$ is localised in high frequencies and $V'$ in low frequencies or the opposite, using in addition that the $\mathfrak S^{0,p}$ norm of an operator is equal to the $\mathfrak S^{0,p}$  norm of its adjoint, we get, since $2 \frac{q'}{1+q'} \leq 2q'$:
\bee
& & \| \langle \nabla \rangle^{\frac 12} \left(W_{V_{\leq 1},+}\gamma_f W_{V'_{\geq 1},+}+W_{V_{\geq 1},+}\gamma_f W_{V'_{\leq 1},+}\right) \langle \nabla \rangle^{\frac 12} \|_{\mathfrak S^{2q'}} \\
& \leq & \| \langle \nabla \rangle^{\frac 12} \left(W_{V_{\leq 1},+}\gamma_f W_{V'_{\geq 1},+}+W_{V_{\geq 1},+}\gamma_f W_{V'_{\leq 1},+}\right) \langle \nabla \rangle^{\frac 12} \|_{\mathfrak S^{\frac{2q'}{1+q'}}}\\
&\leq & \| \langle \nabla \rangle^{\frac 12} W_{V_{\leq 1},+}\langle \nabla \rangle^{-\frac 12} \langle \nabla \rangle^{\frac 12} \gamma_f  W_{V'_{\geq 1},+} \langle \nabla \rangle^{\frac 12} \|_{\mathfrak S^{\frac{2q'}{1+q'}}}+ \| \langle \nabla \rangle^{\frac 12} W_{V'_{\leq 1},+}\langle \nabla \rangle^{-\frac 12} \langle \nabla \rangle^{\frac 12}\gamma_f  W_{V_{\geq 1},+} \langle \nabla \rangle^{\frac 12} \|_{\mathfrak S^{\frac{2q'}{1+q'}}}\\
&\lesssim & \| \langle \nabla \rangle^{\frac 12} W_{V_{\leq 1},+}\gamma_f \langle \nabla \rangle^{\frac 12} \|_{\mathfrak S^{2q'}} \|\langle \nabla \rangle^{-\frac 12}  W_{V'_{\geq 1},+} \langle \nabla \rangle^{\frac 12} \|_{\mathfrak S^{2}}+\text{symmetric}\\
&\lesssim  & \| V\|_{E}\| V'\|_{L^2_tL^2_x}+\| V\|_{L^2_tL^2_x}\| V'\|_{E}
\eee
(where symmetric means the same expression with $V$ and $V'$ interverted). Next, when both $V$ and $V'$ are localised in high frequencies, we write $W_{V_{\geq 1},+}\gamma_f W_{V'_{\geq 1},+}=\mathbb E |W_{V_{\geq 1},+}Y_0 \rangle \langle W_{V'_{\geq 1},+}Y_0|$, use that $2\leq 2q'$, Cauchy-Schwarz and \fref{bd:potentialtorandomrefined}:
\bee 
&& \| \langle \nabla \rangle^{\frac 12}W_{V_{\geq 1},+}\gamma_f W_{V'_{\geq 1},+}\langle \nabla \rangle^{\frac 12}\|_{\mathfrak S^{2q'}} \leq 
\| \langle \nabla \rangle^{\frac 12}W_{V_{\geq 1},+}\gamma_f W_{V'_{\geq 1},+}\langle \nabla \rangle^{\frac 12}\|_{\mathfrak S^{2}} \\
&=& \int_{\mathbb R^6}dxdy \left| \mathbb E \left(\langle \nabla \rangle^{\frac 12}W_{V_{\geq 1},+}Y_0(x)\langle \nabla \rangle^{\frac 12}W_{V'_{\geq 1},+}\overline{Y_0}(y) \right) \right|^2\\
&&\quad \lesssim  \int_{\mathbb R^6}dxdy \left(\mathbb E  |\langle \nabla \rangle^{\frac 12}W_{V_{\geq 1},+}Y_0(x)|^2\right) \left(\mathbb E |\langle \nabla \rangle^{\frac 12}W_{V'_{\geq 1},+}\overline{Y_0}(y)|^2 \right)\\
&& \quad \quad \lesssim \| W_{V_{\geq 1},+}Y_0\|_{L^2_\omega H^{\frac 12}}^2\| W_{V'_{\geq 1},+}Y_0\|_{L^2_\omega H^{\frac 12}}^2 \lesssim \| V \|_{L^2_tL^2_x}\| V' \|_{L^2_tL^2_x}.
\eee
The above decomposition and three following bounds, using \fref{bd:schattenincreasing}, prove the first bound in \fref{bd:WVY0operator}. 

The second bound in \fref{bd:WVY0operator} consists in estimating $\E(|W_{V,+}(Y_0)\rangle \langle Y_0|)$ in $\mathfrak S^{\frac12,2q'}$. We decompose it into
$$
\E(|W_{V,+}(Y_0)\rangle \langle Y_0|) = \E(|W_{V_{\leq 1},+}(Y_0)\rangle \langle Y_0|)+\E(|W_{V_{\geq 1},+}(Y_0)\rangle \langle Y_0|)
$$
thanks to the linearity of $W_{V,+}$ in $V$. For the high frequencies, since $2$ is less than $2q'$, using what we have proven in \eqref{bd:WVlocalised}:
$$
\|\E(|W_{V_{\geq 1},+}(Y_0)\rangle \langle Y_0|)\|_{\mathfrak S^{\frac12,2q'}} \leq \|\E(|W_{V_{\geq 1},+}(Y_0)\rangle \langle Y_0|)\|_{\mathfrak S^{\frac12,2}} \lesssim \|V\|_{E}.
$$
For the low frequencies, we use identity \eqref{id:WV+Y0} to get $\E(|W_{V_{\leq 1},+}(Y_0)\rangle \langle Y_0|) = W_{V_{\leq 1},+}\gamma_f$, so that:
$$
\an{\nabla}^{\frac12} \E(|W_{V_{\leq 1},+}(Y_0)\rangle \langle Y_0|) \an{\nabla}^{\frac12} = \an{\nabla}^{\frac12} W_{V_{\leq 1},+} \an{\nabla}^{-\frac12} \an{\nabla}^{\frac12} \gamma_f \an{\nabla}^{\frac12}.
$$
Using \eqref{bd:holderschatten}, we get
$$
\|\E(|W_{V_{\leq 1},+}(Y_0)\rangle \langle Y_0|)\|_{\mathfrak S^{1/2,2q'}}\leq \|\an{\nabla}^{\frac12} W_{V_{\leq 1},+} \an{\nabla}^{-\frac12} \|_{\mathfrak S^{2q'}} \|\an{\nabla}^{\frac12} \gamma_f \an{\nabla}^{\frac12}\|_{\mathfrak S^\infty}.
$$
Finally, we use that $\langle \nabla \rangle \gamma_f$ is a bounded Fourier multiplier from \fref{bd:pointwisefweighted} and \eqref{bd:WVlocalised} to get the second bound in \fref{bd:WVY0operator}. Hence \fref{bd:WVY0operator} is established.

We turn to the third and last estimate \fref{bd:WVY0Zoperator} and, to prove it, write from \fref{id:WV+Y0}:
$$
\mathbb E |W_{V,+}(Y_0)\rangle \langle Z |=\mathbb E|(W_{V_{\leq 1},+}+W_{V_{\geq 1},+})(Y_0)\rangle \langle Z |=W_{V_{\leq 1},+} \mathbb E|Y_0\rangle \langle Z |+\mathbb E |W_{V_{\geq 1},+}Y_0\rangle \langle Z |.
$$
For the first term, using H\"older, \fref{bd:WVlocalised} and \fref{bd:ZY0operator}:
\bee
&& \| \langle \nabla \rangle^{\frac 12} W_{V_{\leq 1},+} \mathbb E|Y_0\rangle \langle Z |\langle \nabla \rangle^{\frac 12} \|_{\mathfrak S^{\frac{2q'}{1+q'}}} = \| \langle \nabla \rangle^{\frac 12} W_{V_{\leq 1},+} \langle \nabla \rangle^{-\frac 12}\langle \nabla \rangle^{\frac 12}  \mathbb E|Y_0\rangle \langle Z |\langle \nabla \rangle^{\frac 12} \|_{\mathfrak S^{\frac{2q'}{1+q'}}} \\
&\lesssim & \| \langle \nabla \rangle^{\frac 12} W_{V_{\leq 1},+} \langle \nabla \rangle^{-\frac 12}\|_{\mathfrak S^{2q'}} \| \langle \nabla \rangle^{\frac 12}  \mathbb E|Y_0\rangle \langle Z |\langle \nabla \rangle^{\frac 12} \|_{\mathfrak S^{2}} \lesssim \| V\|_E \| Z\|_{L^2_\omega H^{\frac 12}_x},
\eee
while for the second term, using the third inequality in \fref{bd:ZY0operator} and \fref{bd:potentialtorandomrefined}:
$$
\| \mathbb E |W_{V_{\geq 1},+}Y_0\rangle \langle Z |\\|_{\mathfrak S^{\frac 12,2}} \lesssim \| W_{V_{\geq 1},+}Y_0\|_{L^2_\omega H^{\frac 12}_x} \| Z \|_{L^2_\omega H^{\frac 12}_x}\lesssim \| V\|_{L^2_tL^2_x} \| Z \|_{L^2_\omega H^{\frac 12}_x}.
$$
The above decomposition and the two bounds that follow, using \fref{bd:schattenincreasing}, prove \fref{bd:WVY0Zoperator}. We now apply the bounds \fref{bd:ZY0operator}, \fref{bd:WVY0operator} and \fref{bd:WVY0Zoperator} to $\gamma_+^1$, $\gamma_+^2$ and $\gamma_+^3$ respectively, which shows that:
$$
\| \gamma_+\|_{\mathfrak S^{\frac 12, 2q'}}\lesssim \left(\| V\|_{E}+\| Z_+ \|_{L^2_\omega H^{1/2}_x} \right)\left(1+\| V\|_{E}+\| Z_+ \|_{L^2_\omega H^{1/2}_x}\right).
$$
Hence, as $V\in E$ from \fref{bd:VinE} in Step 1, $\gamma_\pm \in \mathfrak S^{1/2,2q'}$ for any $q'>2$ as claimed in Corollary \ref{corollaire}.\\

\noindent \textbf{Step 4} \emph{Proof that $S(-t)\gamma S(t)\rightarrow \gamma_+$ in $\mathfrak S^{\frac 12,4}$}. From \fref{id:mainresult} we write
$$
S(-t)X(t) =Y_0 + W_{V,+}Y_0 + Z_++\tilde W_{V,+}Y_0+R
$$
with $\| R \|_{L^2_\omega H^{\frac 12}_x}\rightarrow 0$ as $t\rightarrow \infty$ and:
$$
\tilde W_{V,+}=-i\int_0^{\infty} S(-\tau)w*\tilde V(\tau) S(\tau)d\tau, \quad \quad \tilde V(\tau)=V(\tau){\bf 1}(\tau \geq t).
$$
Notice $\| \tilde V\|_{E}\rightarrow 0$ from Step 1. We then write
$$
\begin{array}{lllll}
S(-t)|X(t)\rangle \langle X(t)| S(t) -\gamma_f-\gamma_+& \\
 = \mathbb E\left( |Z_{+}\rangle \langle R |+|R\rangle \langle Z_+ |+|R\rangle \langle Y_0|+|Y_0 \rangle \langle R|\right) & \}=:\tilde \gamma_+^1\\
 \quad + \mathbb E\left(|W_{V,+}(Y_0)\rangle \langle \tilde W_{V,+}(Y_0) |+|\tilde W_{V,+}(Y_0)\rangle \langle W_{V,+}(Y_0) | +|\tilde W_{V,+}(Y_0)\rangle \langle Y_0|+|Y_0 \rangle \langle \tilde W_{V,+}(Y_0)|\right)&  \}=:\tilde \gamma_+^2 \\
\quad \quad +\mathbb E\left( |W_{V,+}(Y_0)\rangle \langle R |+ |R \rangle \langle W_{V,+}(Y_0) |+|\tilde W_{V,+}(Y_0)\rangle \langle Z_+ |+ |Z_+ \rangle \langle \tilde W_{V,+}(Y_0) |\right) & \}=:\tilde \gamma_+^3
\ea
$$
We apply the bounds \fref{bd:ZY0operator}, \fref{bd:WVY0operator} and \fref{bd:WVY0Zoperator} to $\tilde \gamma_+^1$, $\tilde \gamma_+^2$ and $\tilde \gamma_+^3$ above respectively, which shows that:
\bee
&& \| S(-t)|X(t)\rangle \langle X(t)| S(t) -\gamma_f-\gamma_+ \|_{\mathfrak S^{\frac 12, 2q'}} \\
& &\quad \quad \lesssim \left(\| \tilde V\|_E+\| R \|_{L^2_\omega H^{\frac 12}_x} \right) \left(1+\| V\|_E+\| Z_+ \|_{L^2_\omega H^{\frac 12}_x}+\| \tilde V\|_E+\| R \|_{L^2_\omega H^{\frac 12}_x}\right)\rightarrow 0
\eee
as $t\rightarrow \infty$, finishing the proof of Corollary \ref{corollaire}.

\end{proof}

\appendix

\section{About dimension 2}\label{sec:dim2}

We obtain here a scattering result near steady states for Equation \fref{hartree} in dimension 2. This is part of an appendix since its proof is simpler than the case of dimension 3, apart from a technical issue that was already tackled in \cite{FLLS,lewsab2}.

\begin{theorem}\label{th:dim2} Assume that $f$ and $w$ satisfy all the assumptions as in Theorem \ref{th:main}, except the bound $\int_0^\infty \left(\frac{|h'|(r)}{r}+|h''|(r)\right)dr<\infty$. Then, there exists $\delta >0$ such that for all $Z_0 \in \Theta_0$ with $\| Z_0 \|_{\Theta_0}\leq \delta$, the following holds true. The Cauchy problem \fref{hartree} with initial datum $Y_0+Z_0$ has a global solution in $Y  + \mathcal{C} (\R, L^2_{\omega,x})$, and what is more, there exist $Z_\pm \in L^2_{\omega,x}$ and $V\in L^{2}_{t,x}$ such that
\be \label{id:scatteringd=2}
\|X(t) - Y(t) - W_V(Y)(t)-S(t) Z_\pm\|_{L^2_{\omega,x}} \rightarrow 0 \quad \quad \mbox{as } t\rightarrow \pm \infty.
\ee
Moreover, there exists $\tilde Z_\pm \in L^4_x L^2_\omega$ such that 
$$
S(-t)W_V(Y) =  \tilde Z_\pm + o_{L^4_x L^2_\omega} (1)  \quad \quad \mbox{as } t\rightarrow \pm \infty .
$$
Defining the correlation operators $\gamma$, $\gamma_f$ and $\gamma_\pm$ by \fref{def:gamma}, \fref{def:gammaf} and \fref{def:gammapm} there holds $\gamma_\pm \in \mathfrak S^{4}$ and:
\be \label{id:scatteringdensityd=2}
\gamma=S(t)(\gamma_f+\gamma_\pm)S(-t)+o_{\mathfrak S^4}(1) \quad \quad \mbox{as }t\rightarrow \pm.
\ee

\end{theorem}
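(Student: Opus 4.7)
The plan is to rerun the fixed-point scheme captured by Proposition \ref{prop-principle}, adapting all function spaces to the critical regularity $s_c=0$ of the two-dimensional cubic Schr\"odinger equation. Concretely, I would introduce
$$
\Theta_Z^{(2)} = L^2_\omega\mathcal C_t L^2_x \cap L^2_\omega L^4_{t,x}, \qquad \Theta_V^{(2)} = L^2_{t,x}, \qquad \Theta_0^{(2)} = L^2_{\omega,x} \cap L^{4/3}_x L^2_\omega,
$$
and then verify that each bullet of Proposition \ref{prop-principle} holds true in this setup, which would immediately yield global existence and the asymptotic decomposition \fref{id:scatteringd=2}.

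For the linear analysis, the $TT^*$ computation of Proposition \ref{pr:freeandpotential} is essentially dimension-free: in dimension $2$, the change of variable $\tau\mapsto\tilde\tau/(2|\eta|)$ still produces a convolution-in-time kernel whose $L^1$ norm is finite under the sole assumption $\int_0^\infty (1+r)|h|(r)\,dr<\infty$, giving the same half-derivative gain at low frequencies (so the more restrictive conditions $\int\left(|h''|+|h'|/r\right)dr<\infty$ are not needed in 2D). The Sobolev embedding $L^{4/3}_x(\R^2)\hookrightarrow \dot H^{-1/2}_x(\R^2)$, dual to $\dot H^{1/2}\hookrightarrow L^4$, replaces $L^{3/2}_x\hookrightarrow\dot H^{-1/2}_x$ in the control of the initial datum. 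The Strichartz pair $(4,4)$ is admissible in $\R^2$ and plays the role of $(10/3,10/3)$. The invertibility of $\mathrm{Id}-L_2$ on $L^2_{t,x}$ is exactly the original Lewin-Sabin statement of \cite{lewsab2} Proposition 1, under precisely the hypotheses retained in Theorem \ref{th:dim2}.

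The nonlinear estimates simplify because no fractional derivatives need to be carried. The specific quadratic term $Q_2$ still requires the explicit bilinear-kernel analysis of Section \ref{sec:spefquad}: the bound $\|Q_2(U,V)\|_{L^2_{t,x}}\lesssim\|U\|_{L^2_{t,x}}\|V\|_{L^2_{t,x}}$ follows from the two-dimensional analog of Lemma \ref{lem-normK} combined with a planar Loomis-Whitney type inequality via $|\eta|^2|\eta_2|^2-(\eta\cdot\eta_2)^2\geq(\eta_1\xi_2-\eta_2\xi_1)^2$, exactly as in \cite{lewsab2} Proposition 4. All remaining bilinear terms from Section \ref{sec:remaining} are bounded at regularity $0$ by the dual Strichartz and H\"older inequalities; in particular $\|W_V(Z)\|_{L^2_\omega L^4_{t,x}}\lesssim \|V\|_{L^2_{t,x}}\|Z\|_{L^2_\omega L^4_{t,x}}$ and $\|\E(ZZ')\|_{L^2_{t,x}}\lesssim \|Z\|_{\Theta_Z^{(2)}}\|Z'\|_{\Theta_Z^{(2)}}$ from Cauchy-Schwarz together with $L^4_{t,x}\cdot L^4_{t,x}\subset L^2_{t,x}$.

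For the $\mathfrak S^4$ scattering \fref{id:scatteringdensityd=2}, the key input is the orthonormal Strichartz bound \fref{bd:strichartzpotential} at the pair $(p',q')=(2,2)$, which satisfies $q'=2>(d+1)/2=3/2$ in dimension two and produces exactly $\mathfrak S^{2q'}=\mathfrak S^4$. The decomposition $\gamma_\pm=\gamma_\pm^1+\gamma_\pm^2+\gamma_\pm^3$ from Section \ref{sec:coro} transposes without the Leibniz trick of Step 2, since no fractional weight is present to be commuted. I expect the main obstacle to be precisely this endpoint Schatten bound: proving $\|W_{V,+}\|_{\mathfrak S^4}\lesssim \|V\|_{L^2_{t,x}}$ does not follow from any naive H\"older-Schatten argument, and one must invoke the full strength of the orthonormal Strichartz inequalities of \cite{FLLS,FS}. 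Once accepted as a black box, control of the terms $\E|W_{V,+}(Y_0)\rangle\langle Y_0|$ and $\E|W_{V,+}(Y_0)\rangle\langle W_{V,+}(Y_0)|$ in $\mathfrak S^4$ follows by the same duality/Cauchy-Schwarz argument as in Step 3 of the proof of Corollary \ref{corollaire}, and the convergence $S(-t)\gamma S(t)\to\gamma_f+\gamma_\pm$ is obtained as in Step 4 by writing the remainder in terms of the vanishing tail $\tilde V = V\mathbf 1_{\tau\geq t}\in L^2_{t,x}$ and $R\to 0$ in $L^2_{\omega,x}$.
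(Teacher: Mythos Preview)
Your approach has a genuine gap at the fixed-point stage. The second-order expansion \fref{fixedpoint} that underlies Proposition \ref{prop-principle} does \emph{not} close in dimension two with the spaces you propose, and the paper explicitly identifies this obstruction. The problem is the quadratic term $Q_1(Z,V)=2\Re\E(\overline{W_V(Y)}Z+\bar Y W_V(Z))$: to place $\E(\overline{W_V(Y)}Z)$ in $\Theta_V^{(2)}=L^2_{t,x}$ by Cauchy--Schwarz/H\"older you need $W_V(Y)\in L^4_{t,x}L^2_\omega$. But the half-derivative gain of Proposition \ref{pr:freeandpotential} gives $\|W_V(Y)\|_{L^p_tL^q_xL^2_\omega}\lesssim\|V\|_{L^2_t\dot H^{s-1/2}}$ with $\frac2p+\frac2q=1-s$ in dimension two; for $V\in L^2_{t,x}$ this forces $s=1/2$, hence $\frac2p+\frac2q=\frac12$, and the best diagonal exponent is $L^8_{t,x}L^2_\omega$, not $L^4$. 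The same issue breaks your ``Second quadratic term on $Z$'' bound: you cannot conclude $W_V\circ W_U(Y)\in L^4_{t,x}L^2_\omega$ from the linear analysis alone.

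The paper's remedy is to expand one further order, producing the third-order fixed point \fref{fixpointdim2} with cubic terms $C_1(V),C_2(Z,V)$ in place of $Q_1$, and then to invoke a dedicated multilinear tool (Proposition \ref{prop-plusque3V}, a generalisation of Lemma 3 in \cite{lewsab2}) showing that $\E(\overline{W_V^m(Y)}W_V^n(Y))\in L^2_{t,x}$ whenever $m+n\geq 3$, and in particular $W_V^2(Y)\in L^4_{t,x}L^2_\omega$. This Schatten-space argument (H\"older in $\mathfrak S^p$ plus Cwikel's inequality) is the missing ingredient in your outline. Your treatment of the $\mathfrak S^4$ scattering via \fref{bd:strichartzpotential} at $(p',q')=(2,2)$ is essentially correct and matches the paper's Step 3, which is indeed simpler than in dimension three since no fractional Leibniz is required.
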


We now give its proof. At several locations, we shall go faster and skip details that are either basic or too similar with the proof in dimension 3. One issue with dimension 2 is that it is $L^2$-critical. Therefore, winning $1/2$ derivatives using homogeneous Sobolev's inequalities does not fit the numerology. In other words, even though in dimension 3, we could prove that $W_V(Y)$ belonged to $L^5_{t,x} L^2_\omega$ which was sufficient to close the argument, in dimension 2, to use the same type of proof, we should be able to prove that $W_V(Y)$ belongs to $L^4_{t,x} L^2_\omega$, but the scheme we use here provides only the proof of $W_V(Y)$ in $L^8_{t,x} L^2_\omega$, which is not enough. However, Lemma 3 in Section 4 in \cite{lewsab2} allows us to prove that $W_V^2(Y)$ belongs to $L^4_{t,x} L^2_\omega$, and even more : that, if $n+m \geq 3$ then $\E(\overline{W_V^n(Y)} W_V^m(Y))$ belongs to $L^2_{t,x}$. This is the main new technical aspect in comparison with the three dimensional case.

We provide here a slightly more general proposition than Lemma 3 in \cite{lewsab2} but where we dropped the dependence of the constant in $n,m$. The argument follows the same line as \cite{lewsab2,FLLS}, we adapted their proofs to our context.

\begin{proposition}\label{prop-plusque3V}

Let $n,m\in \N$ such that $n+m \geq 3$. There exists $C$ such that for all $(A_i)_{1\leq i \leq n}$, $(B_j)_{1\leq j\leq m}$ and $(D_i)_{1\leq i\leq n}$, $(E_j)_{1\leq j\leq m}$ families of measurable maps from $\R$ to $\barR$ and all $(V_i)_{1\leq i\leq n} \in (L^2_{t,x})^n$ with real values and all $(U_j)_{1\leq j\leq m} \in (L^2_{t,x})^m$ with real values, defining:
$$
W_{V,A,D} : 
 u \mapsto \Big( t\mapsto   -i\int_{D(t)}^{A(t)} d\tau S(t-\tau)\Big[V(\tau ) u(\tau)\Big] \Big).
$$
there holds the inequality:
$$
\big\|\E \Big(\overline{ \prod_{j=1}^m W_{U_j,B_j,E_j} (Y)} \prod_{i=1}^n W_{V_i,A_i,D_i}(Y)\Big)\big\|_{L^2_{t,x}} \leq C \prod_{j=1}^m \|U_j\|_{L^2_{t,x}}\prod_{i=1}^n \|V_i\|_{L^2_{t,x}}
$$
where the product is the composition of the linear maps $W_{V_i,A_i,D_i}$ or $W_{U_j,B_j,E_j}$.

\end{proposition}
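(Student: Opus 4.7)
The plan is to reduce the estimate to a Schatten-class bound on products of the single-variable operators $B_V:=-i\int_{\R} S(-\tau)V(\tau)S(\tau)\,d\tau$, for which the Strichartz inequality for orthonormal systems \fref{bd:strichartzpotential} applies in dimension $d=2$. First, iterating Lemma \ref{lem-Fouriertrick} and using the group property of $S$, I would rewrite the compositions as
\[
\prod_{i=1}^n W_{V_i,A_i,D_i}(Y)(t)=S(t)\mathcal B_n(t)Y_0,\qquad \mathcal B_n(t)=(-i)^n\int_{\R^n}\tilde V_1(\tau_1)\cdots\tilde V_n(\tau_n)\,\chi_n(t,\tau)\,d\tau,
\]
where $\tilde V_k(\tau)=S(-\tau)V_k(\tau)S(\tau)$ and $\chi_n$ is the indicator of the retarded regions $\tau_k\in[D_k(\tau_{k-1}),A_k(\tau_{k-1})]$ with $\tau_0:=t$; define $\mathcal C_m(t)$ analogously from the $U_j,B_j,E_j$. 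Since $\E(|Y_0\rangle\langle Y_0|)=\gamma_f=f(D)^2$, the Gaussian Wick formula identifies the quantity inside the $L^2_{t,x}$-norm of the Proposition with the diagonal density $\rho_\gamma(t,x)$ of the operator $\gamma(t)=S(t)\mathcal B_n(t)\gamma_f\mathcal C_m^\ast(t)S(-t)$.

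I would then dualize: for $W\in L^2_{t,x}$ of unit norm, cyclicity of the trace together with $\gamma_f=f(D)^2$ give
\[
\int \rho_\gamma\,\bar W\,dtdx=\mathrm{tr}\bigl(f(D)\,\Theta\,f(D)\bigr),\qquad \Theta=\int \mathcal C_m^\ast(t)\,\tilde W(t)\,\mathcal B_n(t)\,dt,
\]
with $\tilde W(t)=S(-t)\bar W(t)S(t)$. Expanding $\Theta$ into an $(n{+}m{+}1)$-fold integral in $(\sigma_1,\dots,\sigma_m,t,\tau_1,\dots,\tau_n)$ coupled by the retarded indicators, the multilinear Christ-Kiselev lemma of \cite{ChKi} (in the form used in \cite{lewsab2}) allows one to replace the coupled domain by $\R^{n+m+1}$ at the cost of a constant $C(n,m)$. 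Because each operator $\tilde V_k,\tilde U_j,\tilde W$ depends on a single integration variable, Fubini then factorizes $\Theta=\varepsilon\cdot B_{U_m}^\ast\cdots B_{U_1}^\ast\cdot B_{\bar W}\cdot B_{V_1}\cdots B_{V_n}$ for some $|\varepsilon|=1$.

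With $(p',q')=(2,2)$, admissible in dimension $d=2$ since $(d+1)/2=3/2<2$, the Strichartz bound \fref{bd:strichartzpotential} gives $\|B_V\|_{\mathfrak S^4}\lesssim \|V\|_{L^2_{t,x}}$. H\"older in Schatten spaces applied to the $n+m+1\geq 4$ factors of $\Theta$ yields
\[
\|\Theta\|_{\mathfrak S^{4/(n+m+1)}}\lesssim \|W\|_{L^2_{t,x}}\prod_{i=1}^n\|V_i\|_{L^2_{t,x}}\prod_{j=1}^m\|U_j\|_{L^2_{t,x}},
\]
and the inclusion $\mathfrak S^{4/(n+m+1)}\hookrightarrow \mathfrak S^1$ holds precisely because $n+m\geq 3$. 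Combined with the operator-norm bound $\|f(D)\|_{\mathcal B(L^2)}\leq\|f\|_{L^\infty}<\infty$ (finite under the hypotheses on $f$), H\"older for the trace controls $|\mathrm{tr}(f(D)\Theta f(D))|$, and taking the supremum over $\|W\|_{L^2_{t,x}}=1$ concludes.

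The main obstacle will be the multilinear Christ-Kiselev step: the arbitrary measurable cutoffs $A_i,D_i,B_j,E_j$ couple all time integrations through the product $\chi_n\chi_m'$, preventing a direct factorization, and one must pay a combinatorial constant depending on $n+m$ only to reduce to the full product domain. Once this reduction is carried out, the key numerology $n+m\geq 3$ makes the product of the $(n+m+1)$ factors in $\mathfrak S^4$ automatically trace-class, allowing the otherwise non-compact Fourier multiplier $\gamma_f$ to be absorbed in operator norm.
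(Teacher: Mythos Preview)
Your strategy—dualize, factorize $\Theta$ as a product of $n+m+1$ operators each in $\mathfrak S^4$ via the orthonormal Strichartz estimate \eqref{bd:strichartzpotential}, then use $n+m+1\geq 4$ to land in trace class—is conceptually clean, but the step you yourself flag as the main obstacle is a genuine gap. A multilinear Christ--Kiselev lemma with nested arbitrary measurable cutoffs $\tau_i\in[D_i(\tau_{i-1}),A_i(\tau_{i-1})]$ and output in Schatten classes is not in \cite{ChKi}, and contrary to what you write, \cite{lewsab2} does \emph{not} use Christ--Kiselev for this estimate: their Lemma~3, which the paper follows closely, proceeds by an entirely different route that never decouples the time integrals at the operator level.

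The paper's argument runs as follows. After reducing to nonnegative $V_i,U_j$ and nonnegative test function, the dualized quantity becomes an $(n{+}m{+}1)$-fold time integral of a nonnegative trace, and the indicator $T$ is simply bounded by $1$ pointwise---no Christ--Kiselev needed. One then writes each $\tilde V_i(t_i)=\widetilde{V_i^{1/2}}(t_i)^2$, regroups adjacent factors, and applies H\"older in Schatten spaces \emph{at fixed times} to reach $\mathfrak S^4$ norms of the pairs $\widetilde{V_i^{1/2}}(t_i)\widetilde{V_{i+1}^{1/2}}(t_{i+1})$. The key pointwise dispersive bound (Lemma~1 of \cite{FLLS}, and Lemma~4 of \cite{lewsab2} for the pair sandwiching $\gamma_{|f|^2}$) is
\[
\big\|\widetilde{V_i^{1/2}}(t_i)\,\widetilde{V_{i+1}^{1/2}}(t_{i+1})\big\|_{\mathfrak S^4}\lesssim \frac{\|V_i(t_i)\|_{L^2_x}^{1/2}\|V_{i+1}(t_{i+1})\|_{L^2_x}^{1/2}}{|t_i-t_{i+1}|^{1/2}}.
\]
The remaining time integral is a cyclic multilinear convolution against $\prod_i|t_i-t_{i+1}|^{-1/2}$, handled by recasting it as the trace of a product of operators $\gamma^{1/2}v_i\gamma^{1/2}$ on $L^2(\R)$ (with $\gamma$ the convolution by $|t|^{-1/2}$) and applying Cwikel's inequality in weak $\mathfrak S^4$; the condition $n+m\geq 3$ enters here as $2(n+m+1)>4$. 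Your factorized route would bypass this multilinear Hardy--Littlewood--Sobolev machinery entirely if the decoupling could be justified, but as written that reduction is not available.
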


We postpone the proof of Proposition \ref{prop-plusque3V} to the end of this section and continue with the proof of Theorem \ref{th:dim2}. The fixed point problem at hand is slightly different that the one in dimension 3. This is due to the quadratic term in \fref{fixedpoint} that was called $Q_1$ (defined in \fref{def:Q1}). To estimate it in $L^2_{t,x}$, one should be able to estimate $W_V(Y)$ in $L^4_{t,x}$, which is not possible if we keep a second order expansion as in \fref{fixedpoint}. Therefore, we expand further the nonlinear terms in \fref{fixedpoint}, injecting the $Z$ equation in the one for $V$, and get the third order fixed point problem :
\begin{equation}\label{fixpointdim2}
\left \lbrace{\begin{array}{l l}
Z = S(t)Z_0 + W_V^2(Y) + W_V(Z) \\
V = 2\re \E(\bar Y S(t) Z_0) + 2\re\E (\bar Y W_V(Y)) + \E(|Z|^2) + Q_2(V) \\
\qquad + 2\re \E \left( \overline{W_V(Y)} S(t) Z_0 + \bar Y W_V(S(t) Z_0)\right) + C_1(V) + C_2(Z,V)
\end{array}} \right.
\end{equation}
where $C_1$ and $C_2$ are cubic terms given by
$$
C_1(V) = 2\re \E\left( \overline{W_V(Y)} W_V^2(Y)+ \bar Y W_V^3(Y)\right)
$$
and 
$$
C_2(Z,V)  = 2\re \E\left( \overline{W_V(Y)} W_V(Z) + \bar Y W_V^2(Z) \right).
$$

As in our proof in dimension $3$, we state a general proposition listing sufficient properties in order to obtain Theorem \ref{th:dim2} by solving the above fixed point problem. This is the analogue of Proposition \ref{prop-principle}. We introduce the trilinearisations:
$$
C_1(V,U,W) = 2 \re \E\left( \overline{W_V(Y)} W_U \circ W_W(Y) + \bar Y W_V \circ W_U \circ W_W(Y)\right)
$$
and 
$$
C_2(V,U,Z) = 2\re \E\left( \overline{W_V(Y)} W_U(Z) + \bar Y W_V\circ W_U(Z)\right).
$$
We introduce the following function spaces for the dimension 2:
\be \label{def:spacesd2}
\Theta_0 = L^{4/3}_x L^2_\omega \cap L^2_{\omega,x}\quad , \quad \Theta_Z = \mathcal C(\R, L^2_{\omega,x}) \cap L^4_{t,x} L^2_\omega \quad \textrm{ and }\quad \Theta_V = L^2_{t,x}.
\ee
Note that they are significantly lighter than in dimension 3 and again than in higher dimension.

\begin{proposition}\label{prop:principedim2} Assume that the spaces $(\Theta_0,\Theta_Z,\Theta_V)$ defined by \fref{def:spacesd2} satisfy the list of the following properties:
\begin{description}
\item[Initial datum] $\|S(t)Z_0\|_{\Theta_Z}, \|2\re \E(\bar Y S(t) Z_0)\|_{\Theta_V}\lesssim \|Z_0\|_{\Theta_0}$,
\item[Linear invertibility] $\textrm{Id}_{\Theta_V} -  2\re\E (\bar Y W_V(Y))$ is invertible on $\Theta_V$ with continuous inverse,
\item[Linear continuity of iteration] $\|2\re \E \left( \overline{W_V(Y)} S(t) Z_0 + \bar Y W_V(S(t) Z_0)\right)\|_{\Theta_V} \lesssim \|Z_0\|_{\Theta_0} \|V\|_{\Theta_V}$,
\item[First quadratic term on $Z$] $\|W_V(Z)\|_{\Theta_Z} \lesssim \|V\|_{\Theta_V}\|Z\|_{\Theta_Z}$,
\item[Second quadratic term on $Z$] $\|W_V\circ W_U (Y)\|_{\Theta_Z} \lesssim \|V\|_{\Theta_V}\|U\|_{\Theta_V}$,
\item[Embedding] $\Theta_Z \times \Theta_Z$ is continuous embedded in $\Theta_V$,
\item[Quadratic term on $V$] $\|Q_2(U,V)\|_{\Theta_V} \lesssim \|V\|_{\Theta_V}\|U\|_{\Theta_U}$,
\item[First cubic term on $V$] $\|C_1(V,U,W)\|_{\Theta_V} \lesssim \|V\|_{\Theta_V}\|U\|_{\Theta_V}\|W\|_{\Theta_V}$,
\item[Second cubic term on $V$] $\|C_2(V,U,Z)\|_{\Theta_V} \lesssim \|V\|_{\Theta_V}\|U\|_{\Theta_V}\|Z\|_{\Theta_Z}$,
\item[Scattering space] $\Theta_Z$ is continuously embedded in $L^4_{t,x} L^2_\omega$, for all $A,B \in \barR$, and $V,U \in L^2_{t,x}$ with real values, we have 
$$
\big\| \int_A^B  S(-\tau) \Big[U(\tau) Y(\tau) \Big] d\tau \big\|_{L^4_x L^2_\omega} \lesssim \|U{\bf 1}_{(A,B)}\|_{L^2_{t,x}}
$$
and 
$$
\big\| \int_A^B S(-\tau) \Big[ U(\tau) W_V(Y)(\tau) \Big] d\tau \big\|_{L^2_x L^2_\omega} \lesssim \|U{\bf 1}_{(A,B)}\|_{L^2_{t,x}}\|V\|_{L^2_{t,x}}.
$$
\end{description}
Then the conclusions of Theorem \ref{th:dim2} hold true.
\end{proposition}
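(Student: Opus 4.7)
The plan is to mirror the strategy of Proposition \ref{prop-principle}, adapted to the third-order fixed point system \fref{fixpointdim2} that is forced by the $L^2$-criticality in dimension $2$. First I would rewrite \fref{fixpointdim2} in vectorial form as
$$
\begin{pmatrix} Z \\ V \end{pmatrix} = C_0 + \tilde L \begin{pmatrix} Z \\ V \end{pmatrix} + \tilde Q\begin{pmatrix} Z \\ V \end{pmatrix},
$$
where $C_0 = (S(t) Z_0,\, 2\re \E(\bar Y S(t) Z_0))$ is the data contribution, $\tilde L(Z,V) = (0,\, 2\re \E(\bar Y W_V(Y)))$ is the linear response of the equilibrium, and $\tilde Q$ gathers everything else in \fref{fixpointdim2}: $W_V^2(Y)$ and $W_V(Z)$ in the first slot, and $\E(|Z|^2)$, $Q_2(V)$, the linear-iteration cross-term $2\re\E(\overline{W_V(Y)}S(t)Z_0 + \bar Y W_V(S(t)Z_0))$, and the cubic terms $C_1(V)$ and $C_2(Z,V)$ in the second. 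By Linear invertibility I set $A_{Z_0}(Z,V) = (\textrm{Id}-\tilde L)^{-1}[C_0 + \tilde Q(Z,V)]$.

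The contraction argument proceeds exactly as in the three-dimensional case. Initial datum controls $\|C_0\|_\Theta \lesssim \|Z_0\|_{\Theta_0}$; the four quadratic items (First/Second quadratic term on $Z$, Embedding, Quadratic term on $V$) control the quadratic pieces of $\tilde Q$ by $O(R^2)$; Linear continuity of iteration yields a $O(\|Z_0\|_{\Theta_0} R)$ contribution; and the cubic items produce $O(R^3)$. For $\|Z_0\|_{\Theta_0}$ and $R$ sufficiently small, $A_{Z_0}$ stabilizes $B_\Theta(0,R)$. The trilinearisations already supplied ($Q_2(U,V)$, $C_1(V,U,W)$, $C_2(V,U,Z)$), together with the natural bilinearisation $W_U\circ W_V$ of $W_V^2$, deliver a contraction constant $\lesssim R$, and Banach-Picard then furnishes the unique fixed point $(Z,V)\in B_\Theta(0,R)$.

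For scattering to linear waves, in analogy with \fref{def:Z+} I would set
$$
Z_\pm = Z_0 - i\int_0^{\pm\infty} S(-\tau)\bigl[V(\tau)\bigl(W_V(Y)(\tau) + Z(\tau)\bigr)\bigr]d\tau,
$$
so that $Z(t) - S(t)Z_\pm = iS(t)\int_t^{\pm\infty} S(-\tau)[V(W_V(Y)+Z)]d\tau$. The $VW_V(Y)$ tail is controlled directly by the second Scattering space bound with $U=V$, $A=t$, $B=\pm\infty$, yielding $\lesssim \|V{\bf 1}_{(t,\pm\infty)}\|_{L^2_{t,x}}\|V\|_{L^2_{t,x}}\to 0$; the $VZ$ tail is controlled by the dual Strichartz estimate for the $L^2$-admissible pair $(4,4)$ in dimension $2$ combined with H\"older ($\tfrac{1}{4/3}=\tfrac12+\tfrac14$) and the Scattering space embedding $\Theta_Z\hookrightarrow L^4_{t,x}L^2_\omega$, yielding $\lesssim \|V{\bf 1}_{(t,\pm\infty)}\|_{L^2_{t,x}}\|Z\|_{L^4_{t,x}L^2_\omega}\to 0$. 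The Christ-Kiselev lemma decouples the truncated time integrals hidden in $W_V$ from the outer tail as in Proposition \ref{prop-principle}. This gives $Z_\pm\in L^2_{\omega,x}$ and \fref{id:scatteringd=2}. The $L^4_x L^2_\omega$ convergence of $S(-t)W_V(Y)$ to $\tilde Z_\pm$ follows from the splitting $W_V(Y) = -iS(t)\int_0^{\pm\infty}S(-\tau)(VY)d\tau + i\int_t^{\pm\infty}S(t-\tau)(VY)d\tau$ together with the first Scattering space bound. Finally, \fref{id:scatteringdensityd=2} is obtained from the tensor construction of Corollary \ref{corollaire}, which is significantly lighter in dimension $2$ as we land in $\mathfrak S^4$ without any fractional-derivative bookkeeping.

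The main obstacle I anticipate is the bookkeeping of the three cross-terms ($2\re\E(\overline{W_V(Y)}S(t)Z_0 + \bar Y W_V(S(t)Z_0))$, $C_1$, and $C_2$) inside the contraction step, making sure that each nonlinear factor lands in the correct $\Theta_Z$ or $\Theta_V$ slot and that the combined quadratic-plus-cubic smallness closes; the rest is an almost verbatim adaptation of the proof of Proposition \ref{prop-principle}.
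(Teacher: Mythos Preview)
Your proposal is correct and follows essentially the same route as the paper: a contraction argument on the third-order system \fref{fixpointdim2} using the listed items, then scattering for $Z$ via the same splitting of the tail integrals (the paper writes $Z_\pm=Z^1_\pm+Z^2_\pm$ where you combine them in one formula), and finally the density-matrix convergence via the lighter $\mathfrak S^4$ computation. One minor imprecision: the Linear continuity of iteration term contributes $O(\|Z_0\|_{\Theta_0})$, not $O(R)$, to the Lipschitz constant, but since both are small this does not affect the argument.
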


\begin{proof}

We follow the proof of Proposition \ref{prop-principle}.

\textbf{Step 1} \emph{Global existence near $Y$}. Using all linear and nonlinear estimates listed in Proposition \ref{prop:principedim2}, one can set up a fixed point argument for \fref{fixpointdim2} the very same way as in Step 1 of the proof of Proposition \ref{prop-principle}. We do not provide the details. Therefore, the problem \fref{fixpointdim2} admits a solution $(Z,V)$ satisfying the global bound:
$$
\| Z \|_{ \mathcal C(\R, L^2_{\omega,x}) \cap L^4_{t,x} L^2_\omega}+ \| V \|_{L^2_{t,x}}<\infty.
$$

\textbf{Step 2} \emph{Scattering for random fields}. We have $X=Y+S(t)Z_0+W_V(Y)+W_V^2(Y)+W_V(Z)$. By arguing as in Step 2 of Proposition \ref{prop-principle}, using the Item Scattering space in Proposition \ref{prop:principedim2}, we have that there exist $Z^1_{\pm}\in L^2_{\omega,x}$ and $\tilde Z_{\pm}\in L^4_xL^2_\omega$ such that:
$$
S(t)Z_0+W_V(Z)=S(t)Z^1_{\pm}+o_{L^2_{\omega,x}}(1) \qquad \mbox{and} \qquad W_V(Y)=S(t)\tilde Z_\pm +o_{L^4_xL^2_\omega}(1)
$$
as $t\rightarrow \pm \infty$. The new term here in comparison with dimension $2$ is $W_V^2(Y)$, and can be written as:
\begin{align*}
W_V^2(Y)& =-\int_0^t S(t-\tau) \left[w*V(\tau)W_V(Y)(\tau) \right]d\tau \\
& =S(t) \underbrace{\left(-\int_0^{\pm\infty} S(-\tau) \left[w*V(\tau)W_V(Y)(\tau) \right]d\tau\right)}_{Z^2_\pm} + \underbrace{\int_{\pm \infty}^t S(t-\tau) \left[w*V(\tau)W_V(Y)(\tau) \right]d\tau}_{=o_{L^2_{\omega,x}}(1)}
\end{align*}
where $Z^2_\pm \in L^2_{\omega,x}$ and the $o_{L^2_{\omega,x}}(1)$ are consequences of the second inequality in Item Scattering space and of the boundedness of $\hat w$. By writing $Z_\pm=Z^1_\pm+Z^2_\pm$ we get \fref{id:scatteringd=2}.

\textbf{Step 3} \emph{Scattering for density matrices}. We now prove \fref{id:scatteringdensityd=2}. We only treat the case $t\rightarrow +\infty$, and write $V$ instead of $w*V$ (as $\hat w\in L^\infty$) without loss of generality. We write from \fref{id:scatteringd=2} and :
$$
\begin{array}{lllll}
\gamma-\gamma_f  &=& \mathbb E\left( |Z\rangle \langle Z |+|Z\rangle \langle Y|+|Y \rangle \langle Z|\right) & \}&=:\gamma^1\\
&& \quad +\mathbb E\left(|W_{V}(Y)\rangle \langle W_{V}(Y) | +|W_{V}(Y)\rangle \langle Y|+|Y \rangle \langle S(-t)W_{V}(Y)|\right)&  \}&=:\gamma^2\\
&&\quad \quad +\mathbb E\left( |W_{V}(Y)\rangle \langle Z |+ |Z \rangle \langle W_{V}(Y) |\right) & \}&=:\gamma^3
\ea
$$
where $Z=S(t)Z_++R$, with $\| R \|_{L^2_\omega L^2_x}\rightarrow 0$ as $t\rightarrow \infty$. From this and \fref{bd:ZY0operator} we get:
$$
S(-t)\gamma^1S(t)=\gamma^1_+ +o_{\mathfrak S^2}(1), \quad \gamma^1_+=\mathbb E\left( |Z_+\rangle \langle Z_+ |+|Z_+\rangle \langle Y_0|+|Y_0 \rangle \langle Z_0|\right)\in \mathfrak S^2.
$$
Next, as $\int_0^t S(t-s)(V(s)Y_s)ds=S(t)\left(\int_0^t S(-s)V(s)S(s)ds\right)Y_0$ and $\mathbb E |Y_0\rangle \langle Y_0|=\gamma_f$, we get:
\bee
S(-t)\mathbb E |W_{V}(Y)\rangle \langle W_{V}(Y) |S(t)&=&\mathbb E | -i \int_0^t S(s)V(s)S(s)ds Y_0\rangle \langle-i \int_0^t S(-s)V(s)S(s)ds Y_0|\\
&&\quad =\int_0^t S(-s)V(s)S(s)ds \circ \gamma_f \circ \int_0^t S(-s)V(s)S(s)ds
\eee
and similarly $S(-t)\mathbb E |W_{V}(Y)\rangle \langle Y|S(t)=-i\int_0^t S(-s)V(s)S(s)ds \circ \gamma_f $. Using \fref{bd:strichartzpotential} with $q'=2$ we get:
$$
\int_0^t S(-s)V(s)S(s)ds=iW_{V,+}-\int_t^\infty S(-s)V(s)S(s)ds=W_{V,+}+o_{\mathfrak S^4}(1), \quad \|W_{V,+}\|_{\mathfrak S^4}\lesssim \| V \|_{L^2_tL^2_x}.
$$
From the two previous identities, the above bound, \fref{bd:holderschatten} and \fref{bd:schattenincreasing} we have:
$$
S(-t)\gamma^2S(t)=\gamma^2_+ +o_{\mathfrak S^4}(1),  \quad \quad \gamma^2_+=\mathbb E\left(|W_{V,\pm}Y_0\rangle \langle W_{V,\pm} Y_0|+|W_{V,\pm}Y_0\rangle \langle Y_0|+|Y_0\rangle \langle W_{V,\pm}Y_0|\right)\in \mathfrak S^4.
$$
Finally, combining the above identity and bound for $\int_0^t S(-s)V(s)S(s)ds$ with \fref{bd:ZY0operator}, using \fref{bd:holderschatten} with $1/4+1/2=3/4$ gives:
$$
S(-t)\gamma^3 S(t)=\gamma^3_++o_{\mathfrak S^{\frac 43}}(1), \quad \gamma^3_+=\mathbb E\left( |W_{V,+}Y_0\rangle \langle Z_+ |+ |Z_+ \rangle \langle W_{V,+}Y_0 |\right) \in \mathfrak S^{\frac 43}.
$$
Collecting the above identities for $\gamma^1$, $\gamma^2$ and $\gamma^3$, noticing that $\gamma_+=\gamma^1_++\gamma^2_++\gamma^3_+$ from \fref{def:gammapm}, and using \fref{bd:schattenincreasing}, proves the last part of the Theorem.

\end{proof}

We can now give the proof of Theorem \ref{th:dim2}.

\begin{proof}[Proof of Theorem \ref{th:dim2}].

Thanks to Proposition \ref{th:dim2}, to end the proof of Theorem \ref{th:dim2} we only need to check that $(\Theta_0,\Theta_Z,\Theta_V)$ defined by \fref{def:spacesd2} satisfy the list of properties listed in this Proposition.

Item Initial datum comes from Strichartz estimates, that ensure that
$$
\|S(t)Z_0\|_{\Theta_Z} \lesssim \|Z_0\|_{L^2_{\omega,x}}
$$
and duality. Indeed, let $U \in L^2_{t,x}$ real valued, we have 
$$
\an{ U, 2\re \E (\bar Y S(t) Z_0)}_{t,x} = 2\re \an{\int_{\R} S(-t) \Big[ U(t)Y(t)\Big] dt, Z_0}_{x,\omega}.
$$
By H\"older's inequality, we have 
$$
|\an{ U, 2\re \E (\bar Y S(t) Z_0)}_{t,x}| \leq  2\big\|\int_{\R} S(-t) \Big[ U(t)Y(t)\Big] dt\big\|_{L^4_x L^2_\omega} \|Z_0\|_{L^{4/3}_x L^2_\omega}.
$$
The inequality
$$
\big\|\int_{\R} S(-t) \Big[ U(t)Y(t)\Big] dt\big\|_{L^4_x L^2_\omega} \lesssim \|U\|_{L^2_{t,x}}
$$
follows from Item Scattering spaces that we prove later on.

Item Linear invertibility corresponds to Proposition 1 and Corollary 1 in \cite{lewsab2}.

We can deal with Item Linear continuity for iterate by duality. Indeed, We have for all $\| U\|_{L^2_{t,x}}=1$,
\begin{eqnarray*}
\an{U,\E ( \overline{W_V(Y)} S(t) Z_0)}_{t,x} &=& \an{\int_{0}^\infty S(-t) [U(t)W_V(Y) ],Z_0}_{x,\omega} \\
&\leq &\big\| \int_{0}^\infty S(-t) [U(t)W_V(Y) ]\big\|_{L^2_{x,\omega}} \|Z_0\|_{L^2_{x,\omega}}.
\end{eqnarray*}
We compute that for the first term above:
\begin{align*}
&\big\| \int_{0}^\infty S(-t) [U(t)W_V(Y) ]\big\|_{L^2_{x,\omega}}^2 = \an{  \int_{0}^\infty  S(-t) [U(t)W_V(Y)(t) ], \int_{0}^\infty S(-s)[U(s)W_V(Y)(s)]}_{x,\omega} \\
& \qquad  \qquad  \qquad = \int_{0}^\infty dt \an{   [U(t)W_V(Y)(t) ], \int_{0}^\infty S(t-s)[U(s)W_V(Y)(s)]}_{x,\omega} .
\end{align*}
Applying H\"older inequality, one gets within the range of application of Proposition \ref{prop-plusque3V}
\bee
... &\leq & \big\| \E\left(\overline{W_V(Y)} \int_{0}^\infty S(t - s) [U(s)W_V(Y)(s)]\right) \big\|_{L^2_{t,x}}=\big\| \E(\overline{W_V(Y)} W_{0,\infty,U}\circ W_V(Y) )\big\|_{L^2_{t,x}}\\
&\lesssim & \| U \|_{L^2_{t,x}}  \| V \|_{L^2_{t,x}}^2 = \| V \|_{L^2_{t,x}}^2 .
\eee

Item First quadratic term on $Z$ comes from Strichartz estimates and Christ-Kiselev lemma.

For Item Second quadratic term on $Z$, the inequality
$$
\|W_V\circ W_U (Y) \|_{L^4_{t,x} L^2_\omega} \lesssim \|V\|_{L^2_{t,x}} \|U\|_{L^2_{t,x}}
$$
follows from Proposition \ref{prop-plusque3V}. The inequality
$$
\|W_V\circ W_U (Y) \|_{\mathcal C(\R,L^2_{\omega,x})} \lesssim \|V\|_{L^2_{t,x}} \|U\|_{L^2_{t,x}}
$$
follows from Item Scattering spaces that we prove later on.

Item Embedding follows from the definition of the spaces and H\"older's inequality.

Item Quadratic term on $V$ can be dealt with in the same way as in dimension 3 (see \cite{lewsab2}).

Item First cubic term on $V$ follows from Proposition \ref{prop-plusque3V}.

Item Second cubic term on $V$ follows from Proposition \ref{prop-plusque3V} and by duality. Indeed, let a test function $\phi \in L^2_{t,x}$. We have 
$$
\an{\phi, C_2(V,U,Z)}_{t,x} = I + II
$$
with
$$
I = \an{\phi, 2\re \E( \overline{W_V(Y)} W_U(Z))}_{t,x} \quad \textrm{ and }\quad II = \an{\phi,2\re \E ( \bar Y W_V \circ W_U (Z))}_{t,x}.
$$
We can rewrite $I$ as (where $\text{Id}$ is the mapping $t\mapsto t$)
$$
I = 2 \re \an{ W_{\phi,\text{Id},\infty } \circ W_V (Y), w* U Z}_{t,x,\omega}
$$
and thus, by H\"older's inequality,
$$
|I| \leq 2 \| W_{\phi,0,\infty } \circ W_V (Y)\|_{L^4_{t,x},L^2_\omega} \|w*U\|_{L^2_{t,x}} \|Z\|_{L^4_{t,x} L^2_\omega}.
$$
Above, one has by Proposition \ref{prop-plusque3V} and since $\hat w\in L^{\infty}$:
$$
\| W_{\phi,0,\infty }  \circ W_V (Y)\|_{L^4_{t,x} L^2_\omega} \lesssim \|W\|_{L^2_{t,x}} \|V\|_{L^2_{t,x}}, \qquad \qquad \|w*U\|_{L^2_{t,x}} \lesssim \|U\|_{L^2_{t,x}},
$$
which concludes the estimate on $I$. We estimate $II$ similarly. We have the identity
$$
II = 2 \re \an{w*V,W_{\text{Id},\infty} \circ W_{\phi,\text{Id},\infty }(Y), w*U Z}_{t,x,\omega}.
$$
By H\"older's inequality, we get
$$
|II|\leq 2 \|W_{w*V,\text{Id},\infty} \circ W_{\phi,\text{Id},\infty }(Y)\|_{L^4_{t,x} L^2_\omega}\|w*U\|_{L^2_{t,x}} \|Z\|_{L^4_{t,x} L^2_\omega}.
$$
We use Proposition \ref{prop-plusque3V} to get
$$
\|W_{w*V,\text{Id},\infty} \circ W_{\phi,\text{Id},\infty }(Y)\|_{L^4_{t,x} L^2_\omega} \lesssim \|V\|_{L^2_{t,x}} \|W\|_{L^2_{t,x}}
$$
which concludes the estimate on $II$.

We finally prove Item Scattering spaces. Let $V \in L^2_{t,x}$ and $V_{A,B} = {\bf 1}_{t\in (A,B)} V$. We have 
$$
\int_{A}^B S(-\tau) \Big[ V(\tau) Y(\tau)\Big] d\tau = \int_{\R} S(-\tau) \Big[ V_{A,B}(\tau) Y(\tau)\Big] d\tau.
$$
By Lemma \ref{lem:expressionWVY}, we have 
$$
\E \left(\Big|\int_{A}^B S(-\tau) \Big[ V(\tau) Y(\tau)\Big] d\tau\Big|^2\right) =  \int_{\mathbb R^3} d\xi |f(\xi)|^2 \Big|\int_{\R} S_\xi(-\tau)  V_{A,B}(\tau)d\tau \Big|^2.
$$
Therefore,
$$
\big\| \int_{A}^B S(-\tau) \Big[ V(\tau) Y(\tau)\Big] d\tau\big\|_{L^4_x L^2_\omega}^2 \leq \int_{\mathbb R^3} d\xi |f(\xi)|^2 \big\| \int_{\R} S_\xi(-\tau)  V_{A,B}(\tau)d\tau\big\|_{L^4_x}^2.
$$
By Sobolev's inequality, we have 
$$
\big\| \int_{A}^B S(-\tau) \Big[ V(\tau) Y(\tau)\Big] d\tau\big\|_{L^4_x L^2_\omega}^2 \lesssim \int_{\mathbb R^3} d\xi |f(\xi)|^2 \big\| \int_{\R} S_\xi(-\tau)  V_{A,B}(\tau)d\tau\big\|_{\dot H^{1/2}_x}^2.
$$
By repeating the proof of the $1/2$ regularity gain in \fref{bd:potentialtorandom}, we get the first inequality of Item Scattering space
$$
\big\| \int_{A}^B S(-\tau) \Big[ V(\tau) Y(\tau)\Big] d\tau\big\|_{L^4_x L^2_\omega}^2 \leq C_h \|V_{A,B}\|_{L^2_{t,x}}.
$$
We now prove the second inequality of Item Scattering space, namely that
$$
I := \big\|\int_{A}^B S(-\tau) \Big[V W_U(Y)\Big] d\tau \big\|_{L^2_{x,\omega}} \lesssim \|V_{A,B}\|_{L^2_{t,x}} \|U\|_{L^2_{t,x}}.
$$
We have the identity
\bee
I^2  &=&\an{\int_{\mathbb R} d\tau S(-\tau) V_{A,B} (\tau) W_U(Y)(\tau) , \int_{\mathbb R} ds S(-s)V_{A,B}(s) W_U(Y)(s)}_{x,\omega}\\
& =& \int_{\mathbb R} d\tau \int_{\mathbb R} ds \an{S(s-\tau) V_{A,B}(\tau)W_U(Y) (\tau), V_{A,B}(s) W_U(Y)(s)}_{x,\omega}.
\eee
We recognise above
$$
I^2 = \int_{\mathbb R} ds \an{W_{V_{A,B},-\infty,\infty} \circ W_U(Y) , w*V_{A,B}(s) W_U(Y)(s)}_{x,\omega}
$$
and thus
$$
I^2 = \an{\E(\overline{W_U(Y)}W_{-\infty,\infty,w*V_{A,B}} \circ W_U(Y), w*V_{A,B}}_{s,x}.
$$
By H\"older's inequality and Proposition \ref{prop-plusque3V}, we get the result.

\end{proof}

To end the proof of Theorem \ref{th:dim2}, there remains to prove Proposition \ref{prop-plusque3V}. We follow the lines of \cite{FLLS}. We first need a technical Lemma. We introduce a new set of notations. Let $\underline t=(t_1,...,t_n)$ and $ \underline s=(s_1,...,s_m)$ and set
$$
T_1(t_0,\underline t) = \prod_{i=1}^n {\bf 1}_{t_i \in [D_i(t_{i-1}),A_i(t_{i-1})]}, \quad T_2(s_0,\underline s) = \prod_{j=1}^m {\bf 1}_{s_j \in [E_j(s_{j-1}),B_j(s_{j-1})]} \quad T(t,\underline s,\underline t)=T_1(t,\underline t)T_2(s,\underline s) .
$$
For all $V \in L^2_{t,x}$, we set
$$
\tilde V (t)= S^{-1}(t) V(t) S(t).
$$
and define
$$
\mathbb W_1(\underline t) =\prod_{j=1}^n \tilde V_j(t_{j}) \quad ,\quad \mathbb W_2(\underline s) =\prod_{j=1}^m \tilde U_{m+1-j}(s_{m+1-j}).
$$

\begin{lemma}\label{lem:comput}
We have, for $\rho \left[ S \mathbb W_1  \gamma_{|f|^2} \mathbb W_2S^* \right]$ the diagonal of the kernel of $S \mathbb W_1  \gamma_{|f|^2} \mathbb W_2S^*$:
$$
\E \Big(\overline{ \prod_{j=1}^m W_{U_j,B_j,E_j} (Y)} \prod_{i=1}^n W_{V_i,A_i,D_i}(Y)\Big) = i^m(-i)^n \int_{\mathbb R^{m+n}} d\underline s d\underline t  T(t,\underline s,\underline t)\rho \left[ S(t) \mathbb W_1(\underline t)  \gamma_{|f|^2} \mathbb W_2(\underline s)S(-t) \right].
$$
\end{lemma}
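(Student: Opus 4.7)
The strategy is to unfold each factor $W_{V_i,A_i,D_i}$ as an iterated Duhamel integral, use the group law of the free propagator to rewrite the nested $S(t_{i-1}-t_i)V_i(t_i)$ in terms of the conjugated multiplications $\tilde V_i(t_i) = S(-t_i)V_i(t_i)S(t_i)$, and then evaluate the expectation using the Gaussian structure of $Y$.

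First, by iterating the definition of $W_{V,A,D}$ and setting $t_0 = t$, one obtains
$$
\prod_{i=1}^n W_{V_i,A_i,D_i}(Y)(t) = (-i)^n \int_{\R^n} T_1(t,\underline t)\, S(t-t_1)V_1(t_1)\cdots S(t_{n-1}-t_n)V_n(t_n)\,Y(t_n)\,d\underline t .
$$
Repeated use of the identity $S(a)S(b)=S(a+b)$ allows me to rewrite each block as $S(t_{i-1}-t_i)V_i(t_i)=S(t_{i-1})\tilde V_i(t_i)S(-t_i)$, and the intermediate factors $S(-t_i)S(t_i-t_{i+1})=S(-t_{i+1})$ telescope. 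Combined with the fact that $Y(t_n)=S(t_n)Y_0$, where $Y_0=\int_{\R^3} f(\xi)e^{i\xi\cdot x}dW(\xi)$, this yields the compact expression $\prod_{i=1}^n W_{V_i,A_i,D_i}(Y)(t)=(-i)^n\int T_1(t,\underline t)\,S(t)\mathbb W_1(\underline t)\,Y_0\,d\underline t$. An analogous unfolding for the other product gives, before conjugation, $\prod_{j=1}^m W_{U_j,B_j,E_j}(Y)(t)= i^m\int T_2(t,\underline s)\,S(t)\tilde U_1(s_1)\cdots\tilde U_m(s_m)\, Y_0\,d\underline s$. I note for later use that each $\tilde U_j(s_j)$ is self-adjoint on $L^2$, since $U_j$ is real-valued and $S(-s_j)^*=S(s_j)$.

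Next, I would compute the expectation by a direct computation on Wiener integrals. For any bounded operators $O_1,O_2$ on $L^2$, one has $OY_0(x)=\int f(\xi)[Oe^{i\xi\cdot}](x)\,dW(\xi)$, so using the correlation $\E(dW(\xi)\overline{dW(\eta)})=\delta(\xi-\eta)d\xi d\eta$ and \eqref{id:correlationgaussian},
$$
\E\bigl[\overline{O_1 Y_0(x)}\,O_2 Y_0(x)\bigr]=\int_{\R^3}|f(\xi)|^2\,\overline{[O_1 e^{i\xi\cdot}](x)}\,[O_2 e^{i\xi\cdot}](x)\,d\xi = \rho\bigl[O_2\,\gamma_{|f|^2}\,O_1^*\bigr](x),
$$
where the second equality is the standard expression for the diagonal of the kernel of $O_2\gamma_{|f|^2}O_1^*$ (using that $\gamma_{|f|^2}$ has kernel $\int |f|^2(\xi)e^{i\xi x}\overline{e^{i\xi y}}d\xi$). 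Plugging in $O_2=S(t)\mathbb W_1(\underline t)$ and $O_1=S(t)\tilde U_1(s_1)\cdots\tilde U_m(s_m)$, and using that $O_1^* = \tilde U_m(s_m)\cdots\tilde U_1(s_1)\,S(-t)=\mathbb W_2(\underline s)\,S(-t)$ thanks to the self-adjointness of each $\tilde U_j(s_j)$ and the reversal of order under adjunction, one arrives precisely at the stated identity after multiplying through by the factors $T_1(t,\underline t)T_2(t,\underline s)$ and the prefactor $i^m(-i)^n$.

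The only real obstacles here are bookkeeping ones: keeping track of the order reversal in $\mathbb W_2(\underline s)$ (which is forced by the adjunction of an ordered operator product, not by any accident of notation) and justifying the interchange of Wiener, space and time integrations needed to move the Schrödinger propagators past the stochastic integral defining $Y_0$. Both are provided by the Fubini-type identities \eqref{id:fubinischrodingerwiener} and \eqref{id:fubinilebesguewiener} recalled in Appendix~\ref{sec:tech}, exactly as in the proof of Lemma~\ref{lem:expressionWVY}.
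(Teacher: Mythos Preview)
Your argument is correct and follows essentially the same route as the paper: iterate Duhamel to express each product as $S(t)\mathbb W_1(\underline t)Y_0$ (resp.\ its analogue in the $U_j$), use self-adjointness of $\tilde U_j(s_j)$ to identify the adjoint of the ordered product with $\mathbb W_2(\underline s)$, and then compute the expectation via $\mathbb E|Y_0\rangle\langle Y_0|=\gamma_{|f|^2}$ (which you write out equivalently as the Wiener-integral identity $\mathbb E[\overline{O_1Y_0}\,O_2Y_0]=\rho[O_2\gamma_{|f|^2}O_1^*]$). One small slip: the prefactor \emph{before} conjugation is $(-i)^m$, not $i^m$; the $i^m$ in the final formula arises precisely from taking the complex conjugate of $(-i)^m$.
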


\begin{proof} By definitions of $W_{V_j,A_j,D_j} $, $T_1$ and $\mathbb W_1$, and since $Y(t)=S(t)Y(t=0)$, we have that:
\begin{eqnarray*}
\prod_{j=1}^n W_{V_j,A_j,D_j} (Y)(t)&= (-i)^n \int_{\mathbb R^n} d\underline t T_1(t,\underline t) S(t)\left(\prod_{j=1}^n \tilde V_j(t_{j})\right) Y(t=0)\\
&\quad \quad= (-i)^n \int_{\mathbb R^n} d\underline t T_1(t,\underline t) S(t)\mathbb W_1(\underline t) Y(t=0).
\end{eqnarray*}
Note that for all $1\leq j \leq m$, $s_j\in \mathbb R$, the operator $\tilde U_j(s_j)$ is self-adjoint in $L^2_x$. Therefore, we have 
$$
\left(\prod_{j=1}^m \tilde U_j(s_{j})\right)^*= \prod_{j=1}^m (\tilde U_{m+1-j}(s_{m+1-j}))^*=\mathbb W_2(\underline s).
$$
Consequently, we can write:
\begin{eqnarray*}
&&\overline{ \prod_{j=1}^m W_{U_j,B_j,E_j} (Y)}(t) \prod_{i=1}^n W_{V_i,A_i,D_i}(Y)(t)\\
&&  \quad \quad \quad \quad=i^m(-i)^n \int_{\mathbb R^{m+n}} T_1(t,\underline t)T_2(t,\underline s)\overline{S(t)\mathbb W_2^*(\underline t) Y(t=0)} S(t)\mathbb W_1(\underline t) Y(t=0).
\end{eqnarray*}
Note that the right hand side is the diagonal of the kernel of the operator:
\begin{eqnarray*}
&\overline{S\mathbb W_2^* Y(t=0)}S\mathbb W_1Y(t=0)=\rho\left[ | S \mathbb W_1 (Y(t=0)) \rangle \langle S \mathbb W_2^* (Y(t=0))|\right],\\
&\quad \quad \quad \quad  \quad \quad \quad \quad \quad \quad \quad \quad \quad \quad \quad =\rho \left[ S \mathbb W_1  | (Y(t=0)) \rangle \langle  (Y(t=0))| \mathbb W_2S^*\right]
\end{eqnarray*}
where the ket-bra notations are taken in $L^2_x$. Recalling $\E(| Y(t=0) \rangle \langle Y(t=0) |) = \gamma_{|f|^2}$, that $\mathbb W_1$ and $\mathbb W_2$ do not depend on the random variable, and that $T_1T_2=T$ we get the desired result:
\begin{eqnarray*}
&&\E \Big(\overline{ \prod_{j=1}^m W_{U_j,B_j,E_j} (Y)} \prod_{i=1}^n W_{V_i,A_i,D_i}(Y)\Big) \\
&&\quad  =i^m(-i)^n \int_{\mathbb R^{m+n}} d\underline s d\underline t  T_1(t,\underline t)T_2(t,\underline s)\mathbb E \left(\overline{S(t)\mathbb W_2^*(\underline t) Y(t=0)} S(t)\mathbb W_1(\underline t) Y(t=0)\right) \\
&&\quad \quad  =i^m(-i)^n \int_{\mathbb R^{m+n}} d\underline s d\underline t  T(t,\underline s,\underline t)\rho \left[ \mathbb E \left(S(t) \mathbb W_1(\bar t)  | (Y(t=0)) \rangle \langle  (Y(t=0))| \mathbb W_2(\underline s)S(-t)\right)\right] \\
&&\quad \quad \quad =i^m(-i)^n \int_{\mathbb R^{m+n}} d\underline s d\underline t  T(t,\underline s,\underline t)\rho \left[ S(t) \mathbb W_1(\underline t)  \gamma_{|f|^2} \mathbb W_2(\underline s)S(-t) \right].
\end{eqnarray*}

\end{proof}

\begin{proof}[Proof of Proposition \ref{prop-plusque3V}] We proceed by duality. The core of the argument is the use of inequalities in Schatten spaces, and convolution type inequalities on the real line. Let a test function $V \in L^2_{t,x}$. By separating between non-negative and non-positive parts for $V,V_i, U_j$, $1\leq i\leq n$, $1\leq j\leq m$, we can assume $V,V_i,U_j$ all have constant sign, for example all non-negative. We bound
$$
|I| := \left|\left\langle V,\E \Big(\overline{ \prod_{j=m}^1 W_{U_j,B_j,E_j} (Y)} \prod_{i=1}^n W_{V_i,A_i,D_i}(Y)\Big) \right\rangle_{t,x}\right|.
$$
First note that by cyclicity of the trace:
\begin{eqnarray*}
\left\langle V(t),\rho \left[ S(t) \mathbb W_1(\underline t)  \gamma_{|f|^2} \mathbb W_2(\underline s)S(-t) \right]\right\rangle_x&=\textrm{Tr}\left[ V(t)S(t) \mathbb W_1(\underline t)  \gamma_{|f|^2} \mathbb W_2(\underline s)S(-t) \right] \\
&\quad \quad \quad  =\textrm{Tr} \left[ \tilde V(t) \mathbb W_1(\underline t)  \gamma_{|f|^2} \mathbb W_2(\underline s) \right].
\end{eqnarray*}
Therefore, by the above identity and Lemma \ref{lem:comput} we have the following expression:
\begin{eqnarray*}
|I| &= \left|\int_{\R^{m+n+1}} dt d\underline t d\underline s  T(t,\underline s, \underline t) \textrm{Tr}\left( \tilde V(t) \prod_{i=1}^n \tilde V_i(t_i) \gamma_{|f|^2} \prod_{j=1}^m \tilde U_{m+j-1}(s_{m+j-s})\right)\right|\\
&  \leq  \int_{\R^{m+n+1}} dtd\underline t d\underline s \Big| \textrm{Tr}\Big( \widetilde V(t) \prod_{i=1}^n \widetilde V_i(t_i) \gamma_{|f|^2} \prod_{j=1}^m \widetilde U_j(s_j)\Big)\Big|
\end{eqnarray*}
where we made the abuse of notation of replacing $\tilde U_{m+j-1}(s_{m+j-s})$ by $\tilde U_{j}(s_{j})$ in the last line since the order does not matter. Because $V,V_i,U_j$ are non-negative, we have that $\widetilde V (t)= \widetilde{V^{1/2}(t)}^2, \widetilde V_i (t_i) = \widetilde{V^{1/2}_i(t_i)}^2$ and $\widetilde U_j(s_j) = \widetilde{U_j^{1/2}(s_j)}^2$ for all $1\leq i\leq n$ and $1\leq j\leq m$ and all $t_i,s_j$. Indeed,
$$
\widetilde{V^{1/2}(t)}^2 = S(-t) V^{1/2}(t) S(t) S(-t) V^{1/2} (t)S(t) = \widetilde V(t).
$$

By clyclicity of the trace, we have 
\begin{multline*}
I \leq \int_{\R^{m+n+1}} dtd\underline t d\underline s  \textrm{Tr}\Big( \widetilde {V^{1/2}(t)} \widetilde{V_1^{1/2}(t_1)} \prod_{i=1}^{n-1} \Big( \widetilde{ V_i^{1/2}(t_i)}\widetilde{ V_{i+1}^{1/2}(t_{i+1})} \Big)\\
\widetilde{ V_n^{1/2}(t_n)} \gamma_{|f|^2} \widetilde{U_1^{1/2}(s_1)} \prod_{j=1}^{m-1}\Big( \widetilde{U_j^{1/2}(s_j)} \widetilde{U_{j+1}^{1/2}(s_{j+1})}\Big) \widetilde{U_m^{1/2}(s_m)} \widetilde {V^{1/2}(t)}\Big).
\end{multline*}
Writing $t_0 = t$, $t_{n+1} = s_1, \hdots, t_{n+m} = s_m$ and $V_0 = V$, $V_{n+1} = U_1, \hdots, V_{n+m} = U_m$, we get
\begin{multline*}
I \leq \int_{\R^{n+m+1}} dt_0\hdots dt_{n+m}   \textrm{Tr}\Big( \prod_{i=0}^{n-1} \Big( \widetilde{ V_i^{1/2}(t_i)}\widetilde{ V_{i+1}^{1/2}(t_{i+1})} \Big)\\
\widetilde{ V_n^{1/2}(t_n)} \gamma_{|f|^2} \widetilde{V_{n+1}^{1/2}(t_{n+1})} \prod_{i=n+1}^{n+m-1}\Big( \widetilde{V_i^{1/2}(t_i)} \widetilde{V_{i+1}^{1/2}(t_{i+1})}\Big) \widetilde{V_{n+m}^{1/2}(t_{n+m})} \widetilde {V_0^{1/2}(t_0)}\Big).
\end{multline*}
By H\"older's inequality applied to Schatten spaces, we get
\begin{multline*}
I \leq \int_{\R^{n+m+1}} dt_0\hdots dt_{n+m}  \prod_{i=0}^{n-1} \big\|\widetilde{ V_i^{1/2}(t_i)}\widetilde{ V_{i+1}^{1/2}(t_{i+1})} \big\|_{\mathfrak{S}^{n+m+1}}\\
\big\|\widetilde{ V_n^{1/2}(t_n)} \gamma_{|f|^2} \widetilde{V_{n+1}^{1/2}(t_{n+1})}  \big\|_{\mathfrak{S}^{n+m+1}} \prod_{i=n+1}^{n+m-1}\big\| \widetilde{V_i^{1/2}(t_i)} \widetilde{V_{i+1}^{1/2}(t_{i+1})} \big\|_{\mathfrak{S}^{n+m+1}}\big\| \widetilde{V_{n+m}^{1/2}(t_{n+m})} \widetilde {V_0^{1/2}(t_0)} \big\|_{\mathfrak{S}^{n+m+1}}.
\end{multline*}

Since $n+m+1 \geq 4$, we have 
\begin{multline*}
I \leq \int_{\R^{n+m+1}} dt_0\hdots dt_{n+m}  \prod_{i=0}^{n-1} \big\|\widetilde{ V_i^{1/2}(t_i)}\widetilde{ V_{i+1}^{1/2}(t_{i+1})} \big\|_{\mathfrak{S}^{4}}\\
\big\|\widetilde{ V_n^{1/2}(t_n)} \gamma_{|f|^2} \widetilde{V_{n+1}^{1/2}(t_{n+1})}  \big\|_{\mathfrak{S}^{4}} \prod_{i=n+1}^{n+m-1}\big\| \widetilde{V_i^{1/2}(t_i)} \widetilde{V_{i+1}^{1/2}(t_{i+1})} \big\|_{\mathfrak{S}^{4}}\big\| \widetilde{V_{n+m}^{1/2}(t_{n+m})} \widetilde {V_0^{1/2}(t_0)} \big\|_{\mathfrak{S}^{4}}.
\end{multline*}
Using Lemma 1 p 10 in \cite{FLLS}, we get
$$
 \big\|\widetilde{ V_i^{1/2}(t_i)}\widetilde{ V_{i+1}^{1/2}(t_{i+1})} \big\|_{\mathfrak{S}^{4}} \lesssim \frac{\|V_i^{1/2}(t_i)\|_{L^4} \|V_{i+1}^{1/2}(t_{i+1})\|_{L^4}}{|t_i-t_{i+1}|^{1/2}}
 $$
that is
$$
 \big\|\widetilde{ V_i^{1/2}(t_i)}\widetilde{ V_{i+1}^{1/2}(t_{i+1})} \big\|_{\mathfrak{S}^{4}} \lesssim \frac{\|V_i(t_i)\|_{L^2}^{1/2} \|V_{i+1}(t_{i+1})\|_{L^2}^{1/2}}{|t_i-t_{i+1}|^{1/2}}.
 $$
Using Lemma 4 p18 in \cite{lewsab2}, we have 
$$
\big\|\widetilde{ V_n^{1/2}(t_n)} \gamma_{|f|^2} \widetilde{V_{n+1}^{1/2}(t_{n+1})}  \big\|_{\mathfrak{S}^{4}} \lesssim \|h\|_{L^1}\frac{\|V_n(t_n)\|_{L^2}^{1/2}\|V_{n+1}(t_{n+1})\|_{L^2}^{1/2}}{|t_n - t_{n+1}|^{1/2}}.
$$

Summing up, we get
$$
I \leq C_h \int_{\R^{n+m+1}} dt_0\hdots dt_{n+m}  \prod_{i=0}^{n+m} \|V_i(t_i)\|_{L^2} \prod_{i=0}^{n+m-1} |t_i - t_{i+1}|^{-1/2} |t_{n+m} - t_0|^{-1/2}.
$$
Write $v_i(t_i) =  \|V_i(t_i)\|_{L^2}$ such that
$$
I \leq C_h \int_{\R^{n+m+1}} dt_0\hdots dt_{n+m}  \prod_{i=0}^{n+m} v_i(t_i) \prod_{i=0}^{n+m-1} |t_i - t_{i+1}|^{-1/2} |t_{n+m} - t_0|^{-1/2}.
$$
Let
$$
J = \int_{\R^{n+m+1}} dt_0\hdots dt_{n+m}  \prod_{i=0}^{n+m} v_i(t_i) \prod_{i=0}^{n+m-1} |t_i - t_{i+1}|^{-1/2} |t_{n+m} - t_0|^{-1/2}.
$$
We have that
$$
J = \int_{\R^{n+m+2}} dt_0 \hdots dt_{n+m+1} \delta(t_{n+m+1} - t_0)\prod_{i=0}^{n+m} v_i(t_i) \prod_{i=0}^{n+m} |t_i - t_{i+1}|^{-1/2} .
$$
In other words, $J$ is the integral of the diagonal of
$$
g(t_0,t_{n+m+1}) = \int_{\R^{n+m}} dt_1 \hdots dt_{n+m} \prod_{i=0}^{n+m} v_i(t_i) \prod_{i=0}^{n+m} |t_i - t_{i+1}|^{-1/2} .
$$
Therefore $J$ is the trace of the operator $\Gamma$ with integral kernel
$$
g(t_0,t_{n+m+1}).
$$
Writing $\gamma$ the convolution with $|t|^{-1/2}$ and $\gamma_i = v_i \gamma$, we get that
$$
\Gamma = \prod_{i=0}^{n+m} \gamma_i.
$$
By cyclicity of the trace, we have 
$$
\textrm{Tr}\Gamma = \textrm{Tr} \Big( \prod_{i=0}^{n+m} \gamma^{1/2} v_i \gamma^{1/2} \Big).
$$
By H\"older's inequality applied to Schatten spaces, we get
$$
\textrm{Tr}\Gamma \leq \prod_{i=0}^{n+m} \|\gamma^{1/2} v_i \gamma^{1/2}\|_{\mathfrak S^{n+m+1}}.
$$
We have
$$
\|\gamma^{1/2} v_i \gamma^{1/2}\|_{\mathfrak S^{n+m+1}}= \|\gamma^{1/2} v_i^{1/2} \|_{\mathfrak S^{2(n+m+1)}}^2.
$$
Because $n+m+1 \geq 3> 2$, we have $2(n+m+1) > 4$, and therefore,
$$
\|\gamma^{1/2} v_i \gamma^{1/2}\|_{\mathfrak S^{n+m+1}}\leq \|\gamma^{1/2} v_i^{1/2} \|_{\mathfrak S_w^{4}}.
$$
Using Cwikel's inequality as in p 17 of \cite{FLLS}, we get
$$
\|\gamma^{1/2} v_i^{1/2} \|_{\mathfrak S_w^{4}}\lesssim \|v_i^{1/2}\|_{L^4} = \|V_i\|_{L^2_{t,x}}^{1/2}.
$$
Finally
$$
I \leq C_h \prod_{i=0}^{n+m} \|V_i\|_{L^2_{t,x}}
$$
which concludes the proof.

\end{proof}

\section{Technical results}\label{sec:tech}

We provide here basic results on the Wiener integral for unfamiliar readers. The Wiener integral associates to each $f\in L^2(\mathbb R^3)$ a complex centred Gaussian variable $X_f$ denoted by $\int_{\mathbb R^3} f(\xi) dW(\xi) $ with variance $\int_{\mathbb R^3} |f|^2d\xi$. It is an isometry onto its image in $L^2(\Omega)$ since:
\be \label{id:correlationgaussian}
\int_\Omega \overline{X_f}X_g d\omega=\mathbb E \left(\overline{\int_{\mathbb R^3} f(\xi)dW(\xi) } \int_{\mathbb R^3} g(\xi)dW(\xi) \right)=\int_{\mathbb R^3} \bar f(\xi)g(\xi)d\xi .
\ee
The existence of a probability space with random variables $X_f(\omega)$ for each $f\in L^2(\mathbb R^3)$ satisfying the above correlation relations requires no additional information and follows from the application of Kolmogorov's extension theorem. A Gaussian random field on $\mathbb R^3$ requires however to make sense of an infinite number of Gaussian variables simultaneously, at each points of space. Measurability issues are then coped with by appealing to separability, and it is relevant to have an explicit construction instead of this abstract theorem, to cope with measurability issues. Here, we take $\Omega$ a space with a sequence $g_i:\Omega \rightarrow \mathbb C$ for $i\in \mathbb N$ of centred normalised independent Gaussians. We take $(e_i)_{i\in \mathbb N}$ a basis of $L^2(\mathbb R^3)$ and define for $f\in L^2(\mathbb R^3)$:
\be \label{id:explicitwhitenoise}
X_f(\omega)=\sum_{i\in \mathbb N} \left(\int_{\mathbb R^3} f (\xi)\bar e_i(\xi)d\xi\right)g_i(\omega).
\ee
The above construction is well defined and satisfies \fref{id:correlationgaussian} for finite sums $f(\xi)=\sum_{i=1}^n a_ie_i(\xi)$, and so does its extension to $L^2(\mathbb R^3)$ by isometry. It allows us to prove standard Fubini-type results of commutation between a Lebesgue and a Wiener integral used in this paper.

The reason we choose not to use this notation throughout the paper is that it requires to fix a basis of $L^2$, which makes the notations heavier. Nevertheless, by fixing 
$$
Y_f = \sum_i \an{fe^{-it(m+|\xi|^2) + ix\cdot \xi}, e_i}_{L^2} g_i 
$$\footnote{and indeed, one may check that this formula is satisfied for $g_i = \int e_i(\xi)dW(\xi)$}
we get the same result and indeed the same results for explicit computations. 

\begin{lemma}

For any $f\in L^2(\mathbb R^3\times \mathbb R^3)$ there holds:
\be \label{id:fubinischrodingerwiener}
S(t) \left(\int_{\mathbb R^3} f(\xi,\cdot )dW(\xi) \right)(x)= \int_{\mathbb R^3} (S(t) f)(\xi,x)dW(\xi).
\ee
For any $f\in L^1(\mathbb R^n,L^2 (\mathbb R^3))$ there holds:
\be \label{id:fubinilebesguewiener}
\int_{\mathbb R^n} \left(\int_{\mathbb R^3} f(y,\xi)dW(\xi)\right)dy=\int_{\mathbb R^3} \left(\int_{\mathbb R^n}  f(y,\xi)dy \right)dW(\xi).
\ee
\end{lemma}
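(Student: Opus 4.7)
The plan is to reduce both identities to scalar Fubini theorems via the explicit series representation \eqref{id:explicitwhitenoise}. Fixing an orthonormal basis $(e_i)_{i\in\mathbb N}$ of $L^2(\mathbb R^3)$ and i.i.d.\ centered complex Gaussians $(g_i)$ on $\Omega$ satisfying the correlations in \eqref{id:correlationgaussian}, we have $\int u(\xi) dW(\xi) = \sum_i \langle u, e_i\rangle_{L^2_\xi}\, g_i$ for every $u \in L^2(\mathbb R^3_\xi)$, with convergence in $L^2(\Omega)$. Both identities then amount to exchanging this infinite sum with an auxiliary linear operation (application of $S(t)$, or integration against $dy$), which I justify by standard Banach-valued arguments.

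For \eqref{id:fubinischrodingerwiener}, given $f \in L^2(\mathbb R^3_\xi \times \mathbb R^3_x)$, I first observe that both sides define random fields in $L^2_\omega L^2_x$, since Fubini gives $f(\cdot, x) \in L^2_\xi$ and $f(\xi, \cdot) \in L^2_x$ almost everywhere and $S(t)$ is unitary on $L^2_x$. Writing
\[
\int f(\xi, \cdot)\, dW(\xi) = \sum_i a_i(\cdot)\, g_i, \qquad a_i(x) = \int f(\xi, x) \overline{e_i(\xi)}\, d\xi,
\]
with convergence in $L^2_\omega L^2_x$ by Bessel's identity, I apply $S(t)$ inside the sum (justified by its boundedness on $L^2_x$). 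A second commutation of $S(t)$ with the $\xi$-integral defining $a_i$ (valid since the integrand lies in $L^1_\xi L^2_x$ by Cauchy--Schwarz against $e_i \in L^2_\xi$) identifies $S(t) a_i(x)$ with the Fourier coefficient $\int [S(t) f(\xi, \cdot)](x) \overline{e_i(\xi)}\, d\xi$, and the series representation applied pointwise in $x$ to $u(\xi) = [S(t) f(\xi, \cdot)](x) \in L^2_\xi$ reconstructs the right-hand side.

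For \eqref{id:fubinilebesguewiener}, I interpret the left-hand side as a Bochner integral in $L^2(\Omega)$: for a.e.\ $y$ we have $\|X(y)\|_{L^2_\omega} = \|f(y, \cdot)\|_{L^2_\xi} \in L^1_y$ where $X(y) := \int f(y, \xi)\, dW(\xi)$, so the integral is well-defined, while Minkowski's inequality gives $\int f(y, \cdot)\, dy \in L^2_\xi$ ensuring that the right-hand Wiener integral makes sense. Truncating $X(y) = \sum_i \langle f(y, \cdot), e_i\rangle g_i$ at level $N$, dominated convergence in $L^1_y(L^2_\omega)$ with dominant $\|f(y,\cdot)\|_{L^2_\xi}$ exchanges the $y$-integral with the infinite sum, and scalar Fubini applied term by term (using $|\langle f(y,\cdot), e_i\rangle| \leq \|f(y,\cdot)\|_{L^2_\xi} \in L^1_y$) then yields $\int \langle f(y,\cdot), e_i\rangle\, dy = \langle \int f(y,\cdot)\, dy, e_i\rangle$, which is exactly the coefficient of $g_i$ in the series representation of the right-hand side.

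The only genuine obstacle is keeping track of the nested layers of integration and summation and justifying each exchange in the appropriate Banach-valued setting; no new estimates are needed beyond Bessel's identity, scalar Fubini, and dominated convergence.
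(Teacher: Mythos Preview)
Your proposal is correct and follows essentially the same approach as the paper's proof: both arguments reduce the identities to the explicit series representation \eqref{id:explicitwhitenoise} and then justify the commutation for finite sums (in your case, truncations of the series), extending to the full series by isometry/dominated convergence. The paper's presentation is slightly more concise---it states the density argument abstractly and verifies \eqref{id:fubinischrodingerwiener} only for finite sums $f(\xi,x)=\sum_{i=1}^n f_i(x)e_i(\xi)$, leaving \eqref{id:fubinilebesguewiener} to the reader---whereas you spell out the Bochner-integral interpretation and the dominated convergence step more explicitly; but the underlying mechanism is the same.
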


\begin{proof}

Let $f\in L^2(\mathbb R^3\times \mathbb R^3)$. Thanks to the explicit construction \fref{id:correlationgaussian}, \fref{id:explicitwhitenoise} and a density argument, $\int_{\mathbb R^3} f(\xi,x)dW(\xi)$ is well defined, measurable, with $\|\int fdW\|_{L^2(\mathbb R^3\times \Omega)}=\| f \|_{L^2(\mathbb R^3\times \mathbb R^3)}$. Hence almost surely $ (\int  f(\xi,\cdot)dW(\xi))(\omega)$ belongs to $L^2(\mathbb R^3)$ so the left hand side of \fref{id:fubinischrodingerwiener} is well defined as the free evolution of an $L^2$ function. Since $\|S(t)f\|_{L^2(\mathbb R^3\times \mathbb R^3)}=\|f\|_{L^2(\mathbb R^3\times \mathbb R^3)}$ by Parseval, we get as previously that the right hand side of \fref{id:fubinischrodingerwiener} is well defined as an $L^2(\mathbb R^3\times \Omega)$ function. To show the equality \fref{id:fubinischrodingerwiener} of the two constructions, we therefore only need to show it for a dense subset of $L^2(\mathbb R^3\times \mathbb R^3)$ and the conclusion follows from density and isometry. The proof is then ended by considering the subset of finite sums $f(\xi,x)=\sum_{i=1}^n f_i(x)e_i(\xi)$ with $(f_i)_{1\leq i\leq n} \in L^2(\mathbb R^3)$ since:
\bee
S(t) \left(\int_{\mathbb R^3} f(\xi,\cdot )dW(\xi) \right)(\omega,x)&=&S(t) \left(\sum_{i=1}^n f_i(\cdot) g_i(\omega) \right)(x)\\
&&\quad \quad  =\sum_{i=1}^n S(t)(f_i)(x) g_i(\omega)  = \int_{\mathbb R^3} (S(t) f)(\xi,x)dW(\xi).
\eee
The second equality can be proved very similarly, and is left to the reader.

\end{proof}

We finally recall standard Strichartz estimates in dimension three, and their extension via Christ-Kiselev's Lemma. We refer to the textbook \cite{tao} for additional information.

\begin{lemma} \label{lem:strichartz}
Assume $0\leq s<3/2$ and $2\leq p,q \leq \infty$, satisfy
\begin{equation} \label{id:strichartzexponent}
\frac2{p}+\frac3{q} = \frac 32-s.
\end{equation}
Then the following holds true for a constant $C=C(s,p,q)$, for any $u_0\in \dot H^s_x$:
\be \label{bd:homostrichartz}
\| S(t)u_0\|_{L^p_tL^q_x\cap C_t\dot H^s_x}\leq C \| u_0\|_{\dot H^s_x}.
\ee
Moreover, for any $p,q$ and $\tilde p,\tilde q$ satisfying \fref{id:strichartzexponent} with $s=0$, for any $f\in L^{\tilde p'}_tL^{\tilde q'}_x$ where $\tilde p',\tilde q'$ are the H\"older conjugate exponents of $\tilde p,\tilde q$:
\be \label{bd:dualstrichartz}
\| \int_{\mathbb R} S(t-s)f(s)ds \|_{L^{p}_tL^q_x \cap C_t L^2_x}\leq C\| f\|_{L^{\tilde p'}_tL^{\tilde q'}_x}.
\ee

\end{lemma}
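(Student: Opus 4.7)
The plan is to reduce to the classical Schrödinger estimates and then lift them to homogeneous Sobolev regularity by intertwining with $|\nabla|^s$ and invoking Sobolev embedding. Since $S(t)=e^{-it(m-\Delta)}=e^{-itm}e^{it\Delta}$ and $e^{-itm}$ is a modulus-one scalar, it factors out of every $L^q$-based norm, so without loss of generality we may work with the free Schrödinger group $e^{it\Delta}$.

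First I would establish the $s=0$ case of \fref{bd:homostrichartz} and the full dual bound \fref{bd:dualstrichartz} together via the $TT^*$ method of Ginibre--Velo and Keel--Tao. The two key inputs are the $L^2$-unitarity of $e^{it\Delta}$ and the dispersive estimate $\|e^{it\Delta}f\|_{L^\infty_x}\lesssim |t|^{-3/2}\|f\|_{L^1_x}$; interpolating these yields $\|e^{it\Delta}f\|_{L^q_x}\lesssim |t|^{-3(1/2-1/q)}\|f\|_{L^{q'}_x}$ for $2\le q\le\infty$. Setting $Tu_0=e^{it\Delta}u_0$, one checks that $TT^*f(t)=\int_{\R}e^{i(t-s)\Delta}f(s)\,ds$ is bounded from $L^{\tilde p'}_tL^{\tilde q'}_x$ to $L^p_tL^q_x$ for any admissible pairs $(p,q)$, $(\tilde p,\tilde q)$ with $\tfrac2p+\tfrac3q=\tfrac32$, via Hardy--Littlewood--Sobolev when $p,\tilde p>2$, and via the Keel--Tao bilinear interpolation argument at the endpoint $(p,q)=(2,6)$. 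This gives both the $s=0$ homogeneous Strichartz bound (by a $T,T^*$ duality) and the inhomogeneous bound \fref{bd:dualstrichartz}, and the $C_t L^2_x$ continuity is obtained by the standard density argument: for $u_0\in\mathcal S$ (or $f\in\mathcal S$) the map $t\mapsto S(t)u_0$ (resp.\ $t\mapsto\int S(t-s)f(s)ds$) is continuous into $L^2_x$ by dominated convergence, and the isometry of $S(t)$ on $L^2_x$ passes continuity to the closure.

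For the case $s\in(0,3/2)$ in \fref{bd:homostrichartz}, I would define $\tilde q$ by $\tfrac1{\tilde q}=\tfrac1q+\tfrac{s}{3}$; since $q<\infty$ (forced by $s<3/2$ and $p\ge 2$), this gives a valid exponent $\tilde q\in[2,\infty)$ and a straightforward computation shows $\tfrac2p+\tfrac3{\tilde q}=\tfrac32$, so $(p,\tilde q)$ is admissible at regularity $0$. Since $S(t)$ commutes with the Fourier multiplier $|\nabla|^s$, the already-proved $s=0$ estimate applied to $|\nabla|^s u_0$ gives
\[
\| |\nabla|^s S(t)u_0\|_{L^p_tL^{\tilde q}_x}\lesssim \| u_0\|_{\dot H^s_x},
\]
and then the homogeneous Sobolev embedding $\dot W^{s,\tilde q}(\R^3)\hookrightarrow L^q(\R^3)$ yields the desired $L^p_tL^q_x$ bound. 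The $C_t\dot H^s_x$ continuity follows at once from the fact that $\|S(t)u_0\|_{\dot H^s_x}=\|u_0\|_{\dot H^s_x}$ is $t$-independent (since $S(t)$ is a Fourier multiplier of modulus one), combined with the Schwartz-density argument above applied to $|\nabla|^s u_0$.

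The main (indeed only) subtle point in this plan is the endpoint pair $(p,q)=(2,6)$ in the $s=0$ case, where Hardy--Littlewood--Sobolev fails and one must invoke the Keel--Tao bilinear real-interpolation argument; for all other admissible pairs in dimension $3$ the proof is a direct interpolation/HLS computation. Since the statement is a textbook result, the cleanest path is to cite \cite{tao} and reduce the present form (with the mass shift $m$ and the mixed space $L^p_tL^q_x\cap C_tL^2_x$) to the standard one by the reductions described above.
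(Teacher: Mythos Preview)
The paper does not actually prove this lemma: it is stated in the appendix as a recalled standard result, preceded only by the sentence ``We refer to the textbook \cite{tao} for additional information.'' Your outline (dispersive decay + $TT^*$ with Hardy--Littlewood--Sobolev, the Keel--Tao argument at the endpoint $(2,6)$, and then commutation with $|\nabla|^s$ plus Sobolev embedding for $s>0$) is exactly the textbook route and is correct.

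One small inaccuracy worth flagging: your assertion that $q<\infty$ is ``forced by $s<3/2$ and $p\ge 2$'' is not true as stated (e.g.\ $s=1$, $p=4$, $q=\infty$ satisfies \fref{id:strichartzexponent}), and at $q=\infty$ the homogeneous Sobolev embedding $\dot W^{s,\tilde q}(\R^3)\hookrightarrow L^\infty(\R^3)$ is the critical failing case, so your reduction does not cover that endpoint. This does not affect anything in the paper, which never invokes \fref{bd:homostrichartz} with $q=\infty$.
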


\bibliographystyle{amsplain}
\bibliography{bibeqonrv} 

\end{document}